\definecolor{dark-red}{rgb}{0.7,0.25,0.25}
\definecolor{dark-blue}{rgb}{0.15,0.15,0.55}
\definecolor{medium-blue}{rgb}{0,0,0.65}
\theoremstyle{definition}
\newtheorem{theorem}{Theorem}[section]
\newtheorem{example}[theorem]{Example}
\newtheorem{lemma}[theorem]{Lemma}
\newtheorem{prop}[theorem]{Proposition}
\newtheorem{remark}[theorem]{Remark}
\newtheorem{definition}[theorem]{Definition}
\newcommand{\locIdeal}[2]{\tikz[baseline=.1ex]{
	\draw[gray!40, thick, fill=gray!40, domain=-45:225] plot ({cos(\x)}, {sin(\x)}) to[out=45, in=130] (0.71, -0.71);
	\draw[thick] (-0.71, -0.71) to[out=45, in=135] (0.71, -0.71);
	\node[draw, circle, inner sep=0pt, minimum size=4pt, fill=white] (p1) at (0,-0.41) {};
	\draw[thick] (p1) -- (-0.71,0.71);
	\draw[thick] (0.71,0.71) -- (p1);
	\node (s1) at (-0.6,-0.1) {$#1$};
	\node (s2) at (0.6,-0.1) {$#2$};}
}
\newcommand{\locTangle}[2]{\tikz[baseline=.1ex]{
	\draw[gray!40, thick, fill=gray!40, domain=-45:225] plot ({cos(\x)}, {sin(\x)}) to[out=45, in=130] (0.71, -0.71);
	\draw[thick] (-0.71, -0.71) to[out=45, in=135] (0.71, -0.71);
	\node[draw, circle, inner sep=0pt, minimum size=4pt, fill=white] (p1) at (0,-0.41) {};
	\draw[thick] (-0.15, -0.173) -- (-0.71,0.71);
	\draw[thick] (0.71,0.71) -- (p1);
	\node (s1) at (-0.55,-.15) {$#1$};
	\node (s2) at (0.55,-0.15) {$#2$};}
}
\newcommandx{\MarkedTorusBackground}[6][1=, 2=, 3=, 4=, 5=, 6=]{
    \draw[draw=none, fill=gray!40] (0,1) -- (2,1) -- (2,-1) -- (0,-1) -- (1, -0.15) to[out=0, in=-90] (1.15, 0) to[out=90, in=0] (1, 0.15) to[out=180, in=90] (0.85, 0) to[out=-90, in=180] (1, -0.15) -- (0, -1) -- (0,1);
    \draw[thick, ->] (0,1) -- (0.9,1);
    \draw[thick, ->] (0.85,1) -- (1.2,1);
    \draw[thick] (1.2,1) -- (2,1);
    \draw[thick, ->] (0,-1) -- (0.9,-1);
    \draw[thick, ->] (0.85,-1) -- (1.2,-1);
    \draw[thick] (1.2,-1) -- (2,-1);
    \draw[thick, ->] (0,-1) -- (0,0);
    \draw[thick] (0,-1) -- (0,1);
    \draw[thick, ->] (2,-1) -- (2,0);
    \draw[thick] (2,-1) -- (2,1);
    \draw[thick, black] (1, 0) circle (0.15);
    \node at (0.1, -1.2) {#1};
    \node at (0.45, -1.2) {#2};
    \node at (0.8, -1.2) {#3};
    \node at (1.2, -1.2) {#4};
    \node at (1.55, -1.2) {#5};
    \node at (1.9, -1.2) {#6};
}
\newcommand{\PuncturedMonogonBackground}{
    \draw[thick, black, fill=gray!40] (0,0) circle (1);
    \path[thick, tips, ->] (-0.5, -1) -- (0.05, -1);
    \draw[thick, black, fill=white] (0, 0) circle (0.1);
}
\newcommand{\Z}{\mathbb{Z}} 
\newcommand{\C}{\mathbb{C}} 
\newcommand{\SH}{\mathrm{S\ddot H}}
\newcommand{\e}{\mathbf{e}} 
\newcommand{\HH}{{\mathrm{\ddot H}}}
\title{Stated Skeins and DAHAs}
\author{Raymond Matson and Peter Samuelson}
\begin{document}

\maketitle


\section{Introduction}
A \emph{skein module} is an invariant $Sk(M)$ of an oriented 3-manifold $M$ that was introduced independently by Przytycki \cite{Prz91} and Turaev \cite{Tur88}. The skein $Sk(M)$ is a module over a commutative ring $R$ containing an invertible element $q \in R$ (e.g. $R = \mathbb{Z}[q,q^{-1}]$), and these modules generalize polynomial invariants of knots and links in $\mathbb R^3$. As an $R$-module they are spanned by framed links (in some versions the links are  decorated with some representation-theoretic data). Typically the skein module of $\mathbb R^3$ is isomorphic to $R$ itself, and under this isomorphism a link is sent to its polynomial invariant. There are many versions\footnote{To avoid cluttering the introduction with too much notation involving versions of skein modules, we keep some statements somewhat vague.} of polynomial invariants and these typically correspond to a specific version of skein module. For example, the Jones polynomial of a knot  $K \subset \mathbb R^3$ is equal to the class $[K]$ in the Kauffman bracket skein module of $\mathbb R^3$.

The skein module construction is functorial with respect to oriented embeddings of manifolds, and this functoriality converts additional structure on a 3-manifold $M$ into additional structure on the skein $Sk(M)$. For example, if $M = \Sigma \times [0,1]$ for some surface $\Sigma$, the embedding\footnote{If the version of skein theory in question has boundary conditions, then the algebra structure on the skein of $\Sigma \times [0,1]$ requires consistent boundary conditions at the $\{0\}$ and $\{1\}$ levels.} $(M\times [0,1]) \sqcup (M\times [1,2]) \hookrightarrow M \times [0,2]$ induces an algebra structure on $Sk(\Sigma \times [0,1])$. Concretely, if the skein is generated by links (with decorations), the product $a\cdot b$ in the skein algebra $Sk(\Sigma \times [0,1])$ is given by ``stacking $a$ on top of $b$ in the $[0,1]$ direction.'' Similarly, if $M$ is a 3-manifold with boundary, then the skein module $Sk(M)$ is a module over the skein algebra $Sk(\partial M \times [0,1])$, with the action induced by gluing a collar of the boundary onto $M$.

It turns out that in many interesting cases, the algebras and modules constructed by skein theory are closely related to \emph{double affine Hecke algebras} studied in representation theory. Double affine Hecke algebras (or DAHAs) were introduced by Cherednik \cite{Che95, Che05} to prove several conjectures by Macdonald in algebraic combinatorics, and they have since found applications in many areas of mathematics, including representation theory, algebraic geometry, and knot theory. The DAHA $ \HH_{q,t}(\mathfrak g)$ is an algebra over $\mathbb Q[q^{\pm 1},t^{\pm 1}]$ that is defined by generators and relations using the root data of the Lie algebra $\mathfrak g$. In the specialization $q=t=1$, the DAHA $ \HH_{q,t}(\mathfrak g)$ is isomorphic to the group algebra of $(P\oplus Q)\rtimes W$, where $P$ and $Q$ are the root and weight lattices and $W$ is the Weyl group. The $t$ parameter deforms the Weyl group to the (finite) Hecke algebra, and the $q$ parameter deforms the Laurent polynomial ring $\mathbb{Q}[P\oplus Q]$ into a quantum torus. Remarkably, both of these deformations can be made simultaneously, and $\HH_{q,t}(\mathfrak g)$ is flat over $\mathbb Q[q^{\pm 1},t^{\pm 1}]$.

The goal of the present paper is to study some relationships between \emph{stated skein theory} and DAHAs in the case $\mathfrak g = \mathfrak{sl}_2$. Very roughly, in this case, the relationship between stated skein theory and the skein theory described above is similar to the relationship between decorated Teichm\"{u}ller space and Teichm\"{u}ller space. Concretely, the Kauffman bracket skein module is spanned by framed, unoriented links in  $M$, and the stated Kauffman bracket skein module $\mathscr{S}(M,N)$ is spanned by framed unoriented tangles in $M$, where the ends of the tangles are decorated by \emph{states}\footnote{For $\mathfrak{sl}_2$, a state is a choice of sign $\pm$.}, and are required to end on prescribed intervals in $N \subset \partial M$. For a precise definition, see Section \ref{section:SSA}.

Most of our results relate the stated skein $\mathscr{S}(M,N)$ to Terwilliger's \emph{universal $A_1$ spherical DAHA}, which we call $A$ in the introduction:
\begin{enumerate}
    \item In Theorem \ref{theorem:Generators}, we find generators of the skein algebra $\mathscr{S}(T^2\setminus D^2)$. We also list some relations between these generators in Appendix \ref{section:pbw}, but finding a complete list of relations seems like a very challenging technical problem. 
    \item In Proposition \ref{prop:embedding}, we use skein theory to construct  embeddings of $A$ into a ``rank 6'' quantum torus. (This seems to be unrelated to the standard embedding of this DAHA into a localization of a rank 2 quantum torus that Cherednik used to construct the polynomial representation of the DAHA.)  
    \item In Section \ref{section:GenConst}, we show that each marked 3-manifold with boundary $T^2$ produces a module over Terwilliger's universal $A_1$ spherical DAHA.
    \item Examples: we provide explicit formulas for one particular embedding of $A$ into a quantum torus in Proposition \ref{prop:embedding}, and we provide formulas for the action of $A$ on the skein module of the genus 1 handlebody in Section \ref{section:solidtorus}. 
\end{enumerate}

An outline of the paper is as follows: In Section \ref{section:background}, we briefly discuss historical background about skein modules, double affine Hecke algebras, and previously discovered relationships between these objects. In Section \ref{section:ssatb} we discuss the stated skein algebra of the torus. In Section \ref{section:modsQTori} we use a general theorem about embedding stated skein algebras into quantum tori to compute a specific embedding of the Terwilliger's universal $A_1$ DAHA into a quantum torus. In Section \ref{section:mods3mflds} we describe the construction of universal $A_1$ spherical DAHA modules using 3-manifolds and perform some computations with these modules. In the Appendices we include some of the lengthier diagrammatic and algebraic computations.

\vspace{3mm}

\noindent \textbf{Acknowledgements:} The authors would like to thank Yuri Berest, Thang L\^{e}, David Jordan, Charles Frohman, Chris Grossack, Alexei Oblomkov, Alexander Pokorny Space, Shane Rankin, and A.~Referee for helpful comments and for discussions about skein modules, DAHAs, and related topics over the years.

\section{Historical background}\label{section:background}
\subsection{Classical skein theory -- Kauffman bracket skein modules}
In this section we define the Kauffman bracket skein module and recall several of its properties. In particular, we describe some results mentioned in the introduction that are used to relate skein algebras to DAHAs. In this section our ground ring will be a commutative (Noetherian) ring $R$ containing an invertible element $q \in R^\times$.


\subsubsection{Kauffman bracket skein modules}\label{section:kbsm}
Recall that two maps $f,g:M \to N$ of manifolds are \emph{ambiently isotopic} if they are in the same orbit of the identity component of the diffeomorphism group of $N$. This is an equivalence relation, and a \emph{knot} in a 3-manifold $M$ is the equivalence class of a smooth embedding $K: S^1 \hookrightarrow M$. 

A \emph{framed link} is an 
embedding of a disjoint union of annuli $S^1 \times [0,1]$ into an oriented 3-manifold $M$. (The framing refers to the $[0,1]$ factor and is a technical detail that will be suppressed when possible.) We will consider framed links to be equivalent if they are ambiently isotopic.

Let  $\mathscr L(M)$ be the free $R$-module with basis 
the set of ambient isotopy classes of framed unoriented links in $M$ (including the empty link). Let $\mathscr L'(M)$ be the smallest $R$-submodule of $\mathscr L(M)$ containing the following skein expressions:
\begin{center}\resizebox{0.9\width}{!}{
   \begin{tikzpicture}
        \draw[gray!40, fill=gray!40] (0.25,0) circle (1);
        \draw[thick] (-0.46, 0.71) -- (0.96, -0.71);
        \draw[line width=3mm, gray!40] (0.95, 0.70) -- (-0.45, -0.70);
        \draw[thick] (0.96, 0.71) -- (-0.46, -0.71);
        \node[text width=1cm] at (1.9,0) {$- q$};
        \draw[gray!40, fill=gray!40] (3,0) circle (1);
        \draw[thick] (2.29, 0.71) to[out=-60, in=60] (2.29, -0.71);
        \draw[thick] (3.71, 0.71) to[out=-120, in=120] (3.71, -0.71);
        \node[text width=1.25cm] at (4.75,0) {$- q^{-1}$};
        \draw[gray!40, fill=gray!40] (6,0) circle (1);
        \draw[thick] (5.29, 0.71) to[out=-60, in=-120] (6.71, 0.71);
        \draw[thick] (5.29, -0.71) to[out=60, in=120] (6.71, -0.71);
        \draw[gray!40, fill=gray!40] (10,0) circle (1);
        \draw[thick] (10,0) circle (.5);
        \node[text width=2.4cm] at (12.36,0) {$+$ $(q^2 + q^{-2})$};
        \draw[gray!40, fill=gray!40] (14.2,0) circle (1);
   \end{tikzpicture}}
\end{center}


The meaning of the picture is as follows: suppose there are 3 links $L_+$, $L_0$, and $L_\infty$ which are identical outside of a small 3-ball (embedded as an oriented sub-manifold of $M$), and inside the 3-ball 
they appear as in the figure (where $L_+$ is the leftmost link). Then  the element $L_+ - qL_0 - q^{-1} L_\infty$ is in the submodule $\mathscr L'$. (All pictures drawn in this paper will have blackboard framing. In other words, a line on the page represents a strip $[0,1]\times [0,1]$ in a tubular neighborhood of the page, 
and the strip is always perpendicular to the paper.)

\begin{definition}[\cite{Prz91}]
The \textbf{Kauffman bracket skein module} is the vector space $K_q(M) := \mathscr L / \mathscr L'$. It contains a canonical element $\varnothing \in K_q(M)$ corresponding to the empty link.
\end{definition}

\begin{remark}
 To shorten the notation, if $M = F \times [0,1]$ for a surface $F$, we will often write $K_q(F)$ for the skein module $K_q(F\times [0,1])$.
\end{remark}

\begin{example}\label{skeins3}
One original motivation for defining $K_q(M)$ is the isomorphism of vector spaces
\[R \stackrel \sim \longrightarrow K_q(S^3), \quad 1 \mapsto \varnothing \]
Kauffman proved that this map is an isomorphism and that the inverse image of a link is the Jones polynomial\footnote{More precisely, the image of the link is a number in $\C$ that depends polynomially on $q \in \C^*$, and this (Laurent) polynomial is the Jones polynomial of the link, up to a normalization.} of the link. The skein relations  can be used to remove crossings and trivial loops of a diagram of a link until the diagram is a multiple of the empty link, which shows that the vector space map $R \to K_q(S^3)$ sending $\alpha \mapsto \alpha\cdot \varnothing$ is surjective. Showing it is injective is equivalent to showing the Jones polynomial of a link is well-defined.
\end{example}

In general $K_q(M)$ is just an $R$-module - however, if $M$ has extra structure, then $K_q(M)$ also has extra structure. In particular,
\begin{enumerate}
\item If $M = F \times [0,1]$ for some surface $F$, then $K_q(M)$ is an algebra (which is typically noncommutative). The multiplication is given by ``stacking links." 
\item If $M$ is a manifold with boundary, then $K_q(M)$ is a module over $K_q(\partial M)$. The action is given by ``pushing links from the boundary into the manifold.'' 
\item If $q=\pm 1$, then $K_q(M)$ is a commutative algebra (for any oriented 3-manifold $M$). The multiplication is given by ``disjoint union of links,'' which is well-defined because when $q=\pm 1$, the skein relations allow strands to `pass through' each other.
\end{enumerate}

\begin{example}
 Let $M = (S^1 \times [0,1]) \times [0,1]$ be the solid torus. The skein relations can be applied to remove crossings and trivial loops in a diagram of any link, and the result is a sum of unions of parallel copies of the loop $u$ generating $\pi_1(M)$. This shows that algebra map $R[u] \to K_q(S^1\times[0,1])$ sending $u^n$ to $n$ parallel copies of $u$ is surjective, and it follows from \cite{SW07} that this map is injective.
\end{example}

\begin{remark}
    If $M$ is a 3-manifold with $\partial M \cong \Sigma$, then $K_q(M)$ is a module over the skein algebra $K_q(\Sigma \times [0,1])$, where the action comes from ``pushing links from the boundary into $M$.'' In particular, if $K \subset S^3$ is a knot and $K_\epsilon$ is a small open neighborhood of $K$, then $S^3 \setminus K_\epsilon$ is a module over the skein algebra $K_q(T^2)$. Since the skein algebra $K_q(T^2)$ is isomorphic to the $t=1$ specialization of the spherical DAHA (see Remark \ref{rmk:dahateq1}), each 3-manifold with torus boundary produces a module over the spherical $A_1$ DAHA at $t=1$ (or, at $t=q$, since these are isomorphic algebras).
\end{remark}

\subsubsection{The Kauffman bracket skein algebra in genus 1}
We recall that the \emph{quantum torus} is the algebra
\[
A_q := \frac{R\langle X^{\pm 1},Y^{\pm 1}\rangle}{XY-q^2YX}
\]
There is a $\mathbb Z_2$ action by algebra automorphisms on $A_q$ where the generator simultaneously inverts $X$ and $Y$. We define $e_{r,s} = q^{-rs}X^{r}Y^s \in A_q$, which form a linear basis for the quantum torus $A_q$ and satisfy the relations
\[e_{r,s}e_{u,v} = q^{rv-us}e_{r+u,s+v}.\]

In this section we recall a beautiful theorem of Frohman and Gelca in \cite{FG00} that gives a connection between skein modules and the invariant subalgebra $A_q^{\mathbb Z_2}$. First we establish some notation. Let $T_n \in \C[x]$ be the Chebyshev polynomials defined 
by $T_0 = 2$, $T_1 = x$, and the relation $T_{n+1} = xT_n-T_{n-1}$. If $m,l$ are relatively prime, write $(m,l)$ for the $m,l$ curve on the torus (which is the simple curve wrapping around 
the torus $l$ times in the longitudinal direction and $m$ times in the meridian's direction). It is clear that the links $(m,l)^n$ span $K_q(T^2)$, and it follows from \cite{SW07} that this set is a basis.  However, a more convenient basis is given by the elements $(m,l)_T = T_d((\frac m {d}, \frac l {d}))$ (where $d = \mathrm{gcd}(m,l)$). 
 (We point out that since we are considering unoriented curves, $(m,l) = (-m,-l)$.)

\begin{theorem}[\cite{FG00}]\label{fg00}
The map $f:K_q(T^2) \to A_q^{\Z_2}$ given by $f((m,l)_T) = e_{m,l}+e_{-m,-l}$ is an isomorphism of algebras.
\end{theorem}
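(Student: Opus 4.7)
The plan is three steps: check that $f$ is a well-defined $R$-module isomorphism, verify a ``product-to-sum'' formula in both $K_q(T^2)$ and $A_q^{\Z_2}$, and conclude $f$ is an algebra map by comparing these formulas. For the first step, the $\Z_2$-action on $A_q$ sends $e_{m,l}$ to $e_{-m,-l}$, so since $\{e_{m,l}\}$ is an $R$-basis of $A_q$, the invariant subalgebra $A_q^{\Z_2}$ has $R$-basis $\{e_{m,l}+e_{-m,-l}\}$ indexed by unordered $\pm$-pairs in $\Z^2$. By the basis result cited from \cite{SW07}, the elements $(m,l)_T$ form an $R$-basis of $K_q(T^2)$ with the same index set, so $f$ is a bijection on bases and hence an $R$-module isomorphism.

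For the second step, the target identity is
\[
(m,l)_T \cdot (p,q)_T = q^{mq-pl}(m+p,l+q)_T + q^{pl-mq}(m-p,l-q)_T.
\]
On the quantum-torus side this is direct: expanding via $e_{r,s}e_{u,v}=q^{rv-us}e_{r+u,s+v}$ gives
\[
(e_{m,l}+e_{-m,-l})(e_{p,q}+e_{-p,-q}) = q^{mq-pl}(e_{m+p,l+q}+e_{-m-p,-l-q}) + q^{pl-mq}(e_{m-p,l-q}+e_{p-m,q-l}),
\]
which is exactly the image under $f$ of the proposed skein identity. So once the same identity is proven in the skein algebra, $f$ respects multiplication on basis elements and is an algebra isomorphism.

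To prove the skein identity, I would first handle the primitive case $\gcd(m,l)=\gcd(p,q)=1$. Using the $SL_2(\Z)$-action on $K_q(T^2)$ coming from the mapping class group of the torus, change coordinates so that $(m,l)$ becomes a meridian; then $(p,q)$ becomes a curve whose diagram on the flat torus crosses the meridian in exactly $n = |mq-pl|$ points. Iteratively applying the Kauffman bracket at each of the $n$ crossings yields $2^n$ resolutions, of which the two ``totally parallel'' resolutions produce the simple closed curves $(m\pm p, l\pm q)$ with the framing contributions collecting into the $q^{\pm(mq-pl)}$ prefactors; the mixed resolutions either pair off under an isotopy-level symmetry on $T^2$ or recombine via Chebyshev identities into Chebyshev combinations of simpler curves. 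To extend the formula from primitive classes to arbitrary ones, I would use $T_a T_b = T_{a+b}+T_{|a-b|}$ together with the fact that parallel copies of a fixed simple closed curve commute in $K_q(T^2)$; this reduces products of $T_d$-colored curves to a recursive application of the primitive case.

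The main obstacle is this organization of the $2^n$ skein resolutions into the two ``extremal'' terms. The difficulty is combinatorial: all intermediate resolutions must be shown either to cancel in pairs or to reassemble via planar isotopy on the torus into Chebyshev-colored simple closed curves, and the $q^{\pm(mq-pl)}$ prefactors must track blackboard framing correctly. The $SL_2(\Z)$ reduction to the case of a meridian versus a $(1,n)$-curve simplifies the geometry enormously, but the framing bookkeeping and orientation conventions on $T^2$ remain the real source of technical difficulty.
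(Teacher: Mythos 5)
Your overall architecture --- reduce the theorem to the product-to-sum formula
\[
(m,l)_T\,(p,q)_T \;=\; q^{mq-pl}\,(m+p,l+q)_T \;+\; q^{pl-mq}\,(m-p,l-q)_T,
\]
verify it by direct expansion in $A_q^{\Z_2}$, and observe that $f$ identifies bases --- is exactly the strategy of \cite{FG00}, and both your quantum-torus computation and the module-isomorphism step are correct. (The paper itself gives no proof and simply cites \cite{FG00}, so that is the proof to compare against.)

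The gap is in your treatment of the skein-side identity, which is the entire content of \cite{FG00}. You propose to resolve all $n=|mq-pl|$ crossings and argue that the two extremal states yield the two terms with prefactors $q^{\pm(mq-pl)}$ while the remaining $2^n-2$ states cancel in pairs or ``recombine via Chebyshev identities.'' As stated this fails already at $n=2$: for $(1,0)$ against $(1,2)$ the right-hand side contains $q^2(2,2)_T = q^2\,T_2((1,1)) = q^2\bigl((1,1)^2 - 2\bigr)$, which is a $\Z[q^{\pm1}]$-linear combination of links and cannot be produced by any single resolution of the diagram, extremal or otherwise; and the intermediate states are genuinely distinct isotopy classes whose coefficients must be accounted for, not cancelled by a symmetry you have not exhibited. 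In \cite{FG00} the Chebyshev polynomials are not a device for reassembling leftover resolutions after the fact --- they are the inductive engine. One first proves the formula when the geometric intersection number is $1$, where the Kauffman bracket has exactly two states (cf.\ the computation in Lemma~\ref{lemma:IntersectionDehnTwist}), and then propagates it by induction on $(p,q)$ using the recursion $T_{n+1}=xT_n-T_{n-1}$ together with the $SL_2(\Z)$ action, so that a product with large intersection number is never resolved crossing by crossing. Without that induction, or an equivalent combinatorial organization of the $2^n$ states (which would amount to redoing it), your step on the skein side is an assertion rather than an argument; the rest of the proposal is sound.
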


In this paper, we will mainly focus on the skein algebra of the punctured torus $T^2 \setminus D^2$. A presentation for this algebra was given by Bullock and Przytycki:

\begin{theorem}[\cite{BP00}] \label{theorem:bp}
The Kauffman bracket skein algebra of the punctured torus is isomorphic to the algebra generated by $x,y,z$, subject to the relations
\begin{equation}\label{relationsforB'}
[x,y]_q = (q^2-q^{-2})z,\quad
[z,x]_q = (q^2-q^{-2})y,\quad
[y,z]_q = (q^2-q^{-2})x
\end{equation}
where we have used the notation $[x,y]_q := qxy-q^{-1}yx$. 
The elements $x$, $y$, and $z$ correspond to the simple closed curves of homology classes $(1,0)$, $(0,1)$, and $(1,1)$, respectively.
\end{theorem}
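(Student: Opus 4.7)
The proof has three components: showing the three relations hold, showing $x,y,z$ generate, and showing no other relations hold.

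First, I would verify the $q$-commutator relations by direct skein computation. The curves $x=(1,0)$ and $y=(0,1)$ can be chosen to intersect transversally in a single point on the torus, so stacking $x$ above $y$ in a collar of the punctured torus produces a diagram with exactly one positive crossing. Resolving this crossing via the Kauffman bracket relation gives $xy = qS_1 + q^{-1}S_2$, where $S_1$ and $S_2$ are the two smoothings; one of these is the $(1,1)$-curve $z$ and the other is the $(1,-1)$-curve. Reversing the stacking yields the opposite crossing and hence $yx = q^{-1}S_1 + qS_2$, so the $q$-commutator $qxy-q^{-1}yx$ cancels the $S_2$-contribution and equals $(q^2-q^{-2})z$ on the nose. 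The remaining two relations follow by the same argument applied to the intersecting pairs $(0,1),(1,1)$ and $(1,1),(1,0)$, or by invoking the $\mathrm{SL}_2(\mathbb Z)$ mapping class group action that cyclically permutes these three homology classes.

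For the generation step, I would use that the skein algebra is spanned by framed multicurves and argue by induction on the homological complexity $|m|+|n|$ of primitive $(m,n)$-curves that every simple closed curve is expressible as a polynomial in $x,y,z$. The three relations yield rewrite rules expressing each of $yx,xz,zy$ as a linear combination of $xy,zx,yz$ and the third generator, and a continued-fraction style reduction analogous to the one used by Frohman--Gelca in Theorem \ref{fg00} realizes arbitrary $(m,n)$-curves and their products from $x,y,z$ together with the peripheral boundary loop, which itself can be expressed as a Casimir-type polynomial in the three generators.

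The main obstacle is completeness: showing that no relations beyond \eqref{relationsforB'} hold. The cleanest approach is via the classical limit $q=-1$, at which the right-hand sides $(q^2-q^{-2})$ all vanish and the relations collapse to full commutativity. By Bullock's theorem, the specialization $K_{-1}$ of the skein algebra of the thickened punctured torus is the coordinate ring of the $\mathrm{SL}_2(\mathbb C)$-character variety of $\pi_1(T^2\setminus D^2)\cong F_2$, which by Fricke--Klein is the polynomial ring $\mathbb C[x,y,z]$ on the three traces. This shows that ordered monomials $x^a y^b z^c$ form a basis of the abstract quotient algebra at $q=-1$. A filtration argument, using the total-degree filtration in which the relations become quasi-homogeneous because $z$ has strictly smaller degree than $xy$, then lifts this PBW basis to generic $q$ and identifies the natural surjection from the abstract quotient onto the skein algebra as an isomorphism. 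The delicate technical point is checking the Diamond-Lemma confluence conditions on the resulting rewriting system, which amounts to comparing the two ways of reducing the overlap words $xyx$, $yzy$, $zxz$ using the three relations; this is where the bulk of the algebraic work lies.
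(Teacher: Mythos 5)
The paper does not prove this statement; it is imported verbatim from \cite{BP00} (and restated as the definition of $B'_q$ in Section \ref{sec_dahabackground}), so there is no in-paper argument to compare against. Your proposal is essentially the standard proof and is sound in outline. The verification of the relations is exactly the computation the paper records as Lemma \ref{lemma:IntersectionDehnTwist}: a single transverse intersection resolves into two smoothings, the $(1,-1)$-contribution cancels in the $q$-commutator, and the $(1,1)$-curve survives with coefficient $q^2-q^{-2}$; the reduction of the other two relations to the first via the order-three mapping class $\left(\begin{smallmatrix}0&-1\\1&-1\end{smallmatrix}\right)$ is also fine. The generation step by Frohman--Gelca-style continued-fraction reduction, with the boundary curve recovered from the Casimir polynomial, is the standard route.

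The one place you should tighten is the completeness step. Your specialization-to-$q=-1$ argument, combined with the quasi-homogeneous filtration, controls the \emph{abstract} quotient $B'_q$: it shows the ordered monomials $x^ay^bz^c$ are linearly independent there (a nontrivial relation over $\mathbb{Z}[q^{\pm1}]$ could be divided by the largest power of $q+1$ and then specialized to a nontrivial relation in $\mathbb{C}[x,y,z]$, contradicting Fricke--Klein). But that alone does not identify the surjection $B'_q\twoheadrightarrow K_q(T^2\setminus D^2)$ as an isomorphism; a priori the skein algebra could satisfy extra relations. You must separately use that $K_q(T^2\setminus D^2)$ is a free $R$-module on the set of multicurves, with basis independent of $q$ --- either by checking injectivity at $q=-1$ (where Bullock/Przytycki--Sikora identify the skein algebra with the character ring $\mathbb{C}[x,y,z]$) and transporting it along this flat basis, or more directly by a leading-term argument showing the images of the monomials $x^ay^bz^c$ have distinct top multicurves in Dehn--Thurston-type coordinates. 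Once the images are known to be independent in the skein algebra, the Diamond Lemma confluence check you flag as the main technical burden becomes unnecessary: spanning of the abstract quotient by ordered monomials (which needs only the rewrite rules, not their confluence) plus independence of their images already forces the surjection to be an isomorphism and yields the PBW basis on both sides.
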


\subsection{The $\mathfrak{sl}_2$ double affine Hecke algebra}\label{sec_dahabackground}
In this section we recall background about the double affine Hecke algebra $\HH_{q,t}$ of type $A_1$ that we will use later. The standard reference for the material in this section is \cite{Che05}.

\subsubsection{The Poincar\`e-Birkhoff-Witt property}
We first give a presentation of the algebra $\HH_{q,t}$.
\begin{definition}
Let $\HH_{q, t}$ be the algebra generated by $X^{\pm 1}$, $Y^{\pm 1}$, and $T$ subject to the relations
\begin{equation}
\label{daha}
TXT=X^{-1},\quad TY^{-1}T = Y, \quad XY=q^2YXT^2, \quad
(T-t)(T+t^{-1})=0
\end{equation}
\end{definition}

We remark that we have replaced the $q$ that is standard in the third relation with $q^2$ to agree with the standard conventions for  skein relations. Also, note that the fourth relation implies that $T$ is invertible, with inverse $T^{-1} = T + t^{-1} - t$. Finally, if we set $t=1$, then the fourth relation reduces to $T^2 = 1$, and the third relation becomes $XY=q^2YX$. These imply that $\HH_{q,1}$ is isomorphic to the cross product $A_q\rtimes \Z_2$ (where the generator of $\Z_2$ acts by inverting $X$ and $Y$). 

One of the key propeties of $ \HH_{q,t} $ is the so-called PBW property, which says that the multiplication map yields a linear isomorphism
$$
R[X^{\pm 1}] \otimes R[\Z_2] \otimes R[Y^{\pm 1}] \stackrel{\sim}{\to} \HH_{q,t}
$$
Another way of stating this property is that the elements $\,\{X^n T^\varepsilon Y^m\, :\, m,n \in \Z\,,\,\varepsilon = 0,\,1\}\,$
form a linear basis in $ \HH_{q,t} $. (See \cite{Che05}, Theorem~2.5.6(a).)

\subsubsection{The spherical subalgebra}\label{sph}
If $ t \ne \pm i $, the algebra $ \HH_{q,t} $ contains the idempotent $\e := (T+t^{-1})/(t+t^{-1})$ (the identity
$\,\e^2 = \e \,$ is equivalent to the last relation in (\ref{daha})). The \emph{spherical subalgebra} of $ \HH_{q,t} $ is
\begin{equation}
\label{salg}
\SH_{q,t} := \e\HH_{q,t}\e 
\end{equation}
Note that $\SH_{q,t}$ inherits its additive and multiplicative structure from $ \HH_{q,t} $, but the identity element of $\SH_{q,t}$ is
$ \e $, which is different from $\, 1 \in \HH_{q,t} $.

\begin{remark}\label{rmk:dahateq1}
In the $t=1$ specialization, there is an isomorphism $A_q\rtimes \Z_2 \cong H_{q,1}$ given by sending $T$ to the generator of $\Z_2$ (and $X,Y$ map to $X,Y$). This induces an isomorphism
$A_q^{\Z_2} \cong \SH_{q,1}$ given by $w \mapsto \e \bar w \e$, where $w \in A_q^{\Z_2}$ is a symmetric word in $X,Y$, and $\bar w$ is the same word, viewed as an element of $\HH_{q,t}$. Combining this isomorphism with the main theorem of \cite{FG00} shows that the skein algebra $K_q(T^2)$ is isomorphic to $\SH_{q,1}$. For later use we will need a presentation of $\SH_{q,t}$ which we give here. 
\end{remark}

\begin{definition}
\emph{Terwilliger's universal $A_1$ spherical DAHA} $B'_q$ is the algebra generated by $x,y,z$ modulo the following relations:
\begin{equation}\label{relationsforB'}
[x,y]_q = (q^2-q^{-2})z,\quad
[z,x]_q = (q^2-q^{-2})y,\quad
[y,z]_q = (q^2-q^{-2})x
\end{equation}
Also, define $B_{q,t}$ to be the quotient of $B'_q$ by the additional relation
\begin{equation}\label{casimir_rel}
q^2x^2 + q^{-2}y^2+ q^2z^2 -qxyz= \left( \frac t q - \frac q t\right)^2 + \left( q + \frac 1 q\right)^2
\end{equation}
\end{definition}

\begin{remark}\label{rmkcascentral}
    The element on the left hand side of (\ref{casimir_rel}) is central in $B'_q$, so $B_q$ is the quotient of $B'_q$ by a central character.
\end{remark}

\begin{theorem}[\cite{Ter13}]\label{theorem:ter}
There is an algebra isomorphism $f:B_{q,t} \to \SH_{q,t}$ defined by the following formulas:
\begin{align}\label{xyz}
x &\mapsto (X+X^{-1})\e\notag\\
y &\mapsto (Y+Y^{-1})\e\\
z &\mapsto q^{-1}(XYT^{-2}+X^{-1}Y^{-1}) \e\notag
\end{align}
\end{theorem}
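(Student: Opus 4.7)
The plan is to verify $f$ is a well-defined algebra homomorphism, then establish bijectivity by a PBW-style basis comparison.

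For well-definedness, I would verify that the three $q$-commutator relations in (\ref{relationsforB'}) and the Casimir relation (\ref{casimir_rel}) hold after substituting the formulas (\ref{xyz}). The key computational tools are the DAHA relations (\ref{daha}), together with the consequences $T\e = \e T = t\e$, $T^{-1}\e = t^{-1}\e$, and $\e^2 = \e$ (which follow immediately from the definition $\e = (T+t^{-1})/(t+t^{-1})$ and $(T-t)(T+t^{-1})=0$). For instance, to check $[f(x),f(y)]_q$, one expands $(X+X^{-1})\e(Y+Y^{-1})\e$ and the reversed product, uses $TYT = Y^{-1}$ and $TXT = X^{-1}$ to move $T$'s through, and uses $XY = q^2 YXT^2$ to reorder $X$'s past $Y$'s; the result should combine to a scalar multiple of $f(z)$ plus an error that cancels. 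The Casimir relation is the hardest of these four checks because it is quadratic in each generator and cubic overall, so one must carefully rewrite $f(x)f(y)f(z)$ and the squares using the relation $T^2 = (t-t^{-1})T + 1$ and the PBW property to collect symmetric terms.

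Next I would prove surjectivity. By the PBW theorem for $\HH_{q,t}$ cited above the definition of $\SH_{q,t}$, every element of $\e\HH_{q,t}\e$ can be expressed as an $R$-linear combination of elements of the form $\e X^m T^\varepsilon Y^n \e$. Using $T\e = t\e$, this reduces to elements $\e X^m Y^n \e$. The subalgebra generated by $f(x), f(y), f(z)$ visibly produces the symmetrizations $\e(X^m + X^{-m})\e$ and $\e(Y^n + Y^{-n})\e$ from powers of $f(x), f(y)$, and $f(z)$ provides the ``mixed'' element needed to generate products $\e X^m Y^n \e + \e X^{-m}Y^{-n}\e$ via induction on $|m|+|n|$. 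One inducts by multiplying by $f(x)$ or $f(y)$ and using the $q$-commutation relations to split off lower-order terms.

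For injectivity, the main obstacle is controlling the size of $B_{q,t}$. The strategy is to find a spanning set for $B_{q,t}$ indexed by the same data as the Laurent monomial basis on the $\SH_{q,t}$ side. Using the three $q$-commutation relations, any monomial in $x,y,z$ can be rewritten (up to lower-order terms) in a normal form such as $x^i y^j z^k$ with $k \in \{0,1\}$, where the bound on $k$ comes from using the Casimir relation (\ref{casimir_rel}) to replace $z^2$ by a polynomial in $x,y,z$ of lower $z$-degree. This produces a spanning set whose image under $f$ consists of linearly independent elements of $\SH_{q,t}$ (linear independence can be read off from leading terms in the PBW basis of $\HH_{q,t}$), so the spanning set is a basis and $f$ is injective.

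The main obstacle will be the Casimir computation on the DAHA side: verifying that $q^2 f(x)^2 + q^{-2} f(y)^2 + q^2 f(z)^2 - q f(x) f(y) f(z)$ collapses to the prescribed scalar requires a careful expansion in which many terms cancel only after repeated use of $XY = q^2 YX T^2$ and $T\e = t\e$. I would organize this calculation by first computing $f(x)f(y)$ and $f(y)f(x)$ in closed form (expressing each as a sum of $\e$-sandwiched Laurent monomials), then using these to build $f(x)f(y)f(z)$, so that the symmetry between the three cyclic permutations can be exploited.
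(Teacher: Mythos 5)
Your proposal is correct in outline but follows a genuinely different route from the paper: the paper only verifies (by direct computation) that the formulas (\ref{xyz}) give a well-defined algebra map and then \emph{cites} \cite{Ter13} for bijectivity, with \cite{BS16} supplying the dictionary between Terwilliger's normalization and the one used here. You instead sketch a self-contained proof. Your architecture is the standard one: a spanning set $\{x^iy^jz^k : k\in\{0,1\}\}$ for $B_{q,t}$, obtained by reordering with the $q$-commutators and eliminating $z^2$ via the Casimir relation (which works precisely because the $xyz$ term has $z$-degree one), matched against a symmetrized-monomial basis of $\SH_{q,t}$ extracted from the PBW basis of $\HH_{q,t}$. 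What the paper's approach buys is brevity and freedom from the leading-term bookkeeping; what yours buys is independence from the external reference and an explicit basis of $B_{q,t}$ as a byproduct.

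Two steps in your sketch need repair before they constitute a proof. First, in the surjectivity reduction you assert that $\e X^m T^{\varepsilon} Y^n \e$ reduces to $\e X^m Y^n \e$ ``using $T\e = t\e$''; but the $T$ sits between $X^m$ and $Y^n$, so that identity does not apply directly. One must first push $T$ past $Y^n$ using the Hecke-type intertwining identity $TY^n = Y^{-n}T + (\text{an $R$-linear combination of } Y^k,\ |k|\le n)$, which follows from $TY^{-1}T=Y$ together with $T^2=(t-t^{-1})T+1$; only then does $T\e=t\e$ finish the job. Second, the claim that linear independence of the images $f(x^iy^jz^k)$ can be ``read off from leading terms'' is incomplete as stated, because $f(x^iy^j)$ and $f(x^{i-1}y^{j-1}z)$ share the same leading bidegree $(i,j)$. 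The argument goes through once you observe that their leading parts are distinguishable: $f(x^iy^j)$ has nonzero leading coefficient on both symmetrized monomials $\e(X^iY^j+X^{-i}Y^{-j})\e$ and $\e(X^iY^{-j}+X^{-i}Y^{j})\e$, whereas $f(x^{i-1}y^{j-1}z)$ hits only the first. Both repairs are routine, so I would classify these as gaps in exposition rather than flaws in the approach.
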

\begin{proof}
The fact that (\ref{xyz}) gives a well-defined algebra map can be checked directly, and the fact that it is an isomorphism is proved in \cite{Ter13}. (See also \cite[Thm. 2.20]{BS16} for the precise conversion between Terwilliger's notation and ours.)
\end{proof}

\begin{remark}
    A-priori, it isn't obvious that the elements on the right hand side of (\ref{xyz}) are contained in $\e \HH_{q,t} \e$. However, short computations show that if we take $a \in \HH_{q,t}$ to be either $X+X^{-1}$, $Y+Y^{-1}$, or $XYT^{-2}+X^{-1}Y^{-1}$, then $a\e = \e a$, and this implies $a \e = \e a \e \in \e \HH_{q,t}\e$.
\end{remark}

\subsection{Previous Relationships}

Even though skein algebras and DAHAs have completely different definitions, in the last two decades a number of relationships have been found between various flavors of these algebras. Here we give a brief, incomplete, imprecise summary of some of these relationships.
\begin{enumerate}
    \item \cite{FG00} The $\mathfrak{sl}_2$ (Kauffman bracket) skein algebra of $T^2$ is isomorphic to the $t=1$ specialization of the $\mathfrak{sl}_2$ spherical DAHA (and also to the $t=q$ specialization), and the $SL_2(\mathbb Z)$ actions on both sides agree.
    \item The skein algebra of the 1-punctured torus is isomorphic to Terwilliger's universal $A_1$ spherical DAHA (see Theorems \ref{theorem:bp} and \ref{theorem:ter} above) \cite{BP00, Ter13}.
    \item The  Kauffman bracket skein algebra of the 4-punctured sphere is isomorphic to the $BC_1$ spherical DAHA. This follows directly from results in \cite{BP00} and \cite{Ter13}. Under this identification, finite dimensional modules over the $BC_1$ spherical DAHA are submodules of the skein module of the genus 2 handlebody \cite{CS21}.
    \item \cite{Che13, Sam19, CD16} Jones polynomials (and Reshetikhin-Turaev invariants) of iterated torus knots have $q,t$ versions constructed using DAHAs, and the $t=q$ specialization recovers the original knot invariant.
    \item \cite{BS16} Conjecturally, $\mathfrak{sl}_2$ skein modules of knot complements deform canonically to modules over the 2-parameter $\mathfrak{sl}_2$ spherical DAHA.
    \item \cite{MS17} The $\mathfrak{gl}_\infty$ DAHA (or, elliptic Hall algebra of Burban and Schiffmann \cite{BS12}) specializes at $t=q$ to the Homflypt skein algebra of the torus, and the $SL_2(\mathbb Z)$ actions agree.
    \item \cite{AS19} Arthamonov and Shakirov constructed an algebra depending on parameters $q,t$ that specializes at $t=q$ to the Kauffman bracket skein algebra of the genus 2 surface, and is acted on by the genus 2 mapping class group. The specialization of the genus 2 DAHA to skein algebras was studied in \cite{CS21}, and Hikami has closely related work in \cite{Hik19}.
    \item \cite{MS21, BCMN23} The $\mathfrak{gl}_n$ DAHA (not spherical, with both parameters) can be realized as braids/tangles in the torus modulo the Homflypt skein relations (and a special relation involving a marked point). This gives a definition of higher genus DAHAs (in type $A_n$). The marked point relation was given a conceptual explanation in upcoming work of Ion and Roller.
    \item \cite{HTY23} Higher genus DAHAs (defined as in \cite{MS21}) can be realized using higher dimensional Heegard-Floer homology.
    \item \cite{GJV24} In type $A_n$, higher rank  DAHAs (specialized at $t=q$) can be realized through factorization homology of the torus (which, very roughly, is a categorical generalization of skein algebras). 
\end{enumerate}

\section{The Stated Skein Algebra of the Torus with Boundary}\label{section:ssatb}

\subsection{Stated Skein Algebras}\label{section:SSA}

A \textit{marked surface} is a pair $(\Sigma, \mathcal{P})$ where $\Sigma$ is a compact oriented surface with (possibly empty) boundary $\partial \Sigma$, and $\mathcal{P} \subset \partial \Sigma$ is a finite set, called the set of marked points. The associated marked $3$-manifold $(M, \mathcal{N})$ is defined by $M = \Sigma \times I$ and its markings $\mathcal{N} = \mathcal{P} \times I$. A \textit{stated $\mathcal{N}$-tangle} or a \textit{$\partial M$-tangle} is a pair ($\alpha$, $s$) where $\alpha$ is a compact $1$-dimensional unoriented submanifold with a framing such that $\partial \alpha = \alpha \cap \mathcal{N}$ and $s$ is a map $s : \partial \alpha \to \{\pm\}$.


A diagram $D$ in $\Sigma$ representing a stated $\mathcal{N}$-tangle in $\Sigma \times I$ is \emph{simple} if $D$ does not contain any crossings in the interior of $\Sigma$ and has no trivial components. (Throughout this work we will only use the term ``simple'' in the case $M = \Sigma \times I$, so that a simple $\partial M$–tangle diagram may be understood as the projection of the tangle onto $\Sigma$.) A closed component of $\alpha$ is \textit{trivial} if it bounds a disk in $M$. Similarly, a tangle component is \textit{trivial} if it can be homotoped, relative to its endpoints, into a single marking. A \textit{parallel tangle} is one that can be homotoped, relative to its endpoints, to a boundary edge, that is a component of $\partial M \setminus \mathcal{N}$.

The \textit{stated skein module} of $(M, \mathcal{N})$, denoted $\mathscr{S}\left(M, \mathcal{N} \right)$, is the quotient of the free $R$-module spanned by isotopy classes of stated $\mathcal{N}$-tangles subject to the following local relations.
\begin{center}\resizebox{0.9\width}{!}{
   \begin{tikzpicture}
        \draw[gray!40, fill=gray!40] (0,0) circle (1);
        \draw[thick] (-0.71, 0.71) -- (0.71, -0.71);
        \draw[line width=3mm, gray!40] (0.70, 0.70) -- (-0.70, -0.70);
        \draw[thick] (0.71, 0.71) -- (-0.71, -0.71);
        \node[text width=1cm] at (1.75,0) {$= q$};
        \draw[gray!40, fill=gray!40] (3,0) circle (1);
        \draw[thick] (2.29, 0.71) to[out=-60, in=60] (2.29, -0.71);
        \draw[thick] (3.71, 0.71) to[out=-120, in=120] (3.71, -0.71);
        \node[text width=1.25cm] at (4.75,0) {$+$ $q^{-1}$};
        \draw[gray!40, fill=gray!40] (6.25,0) circle (1);
        \draw[thick] (5.54, 0.71) to[out=-60, in=-120] (6.96, 0.71);
        \draw[thick] (5.54, -0.71) to[out=60, in=120] (6.96, -0.71);
        \node at (3,-1.4) {$(R_1)$ Skein Relation};
        \draw[gray!40, fill=gray!40] (10,0) circle (1);
        \draw[thick] (10,0) circle (.5);
        \node[text width=2.4cm] at (12.4,0) {$= (-q^2 - q^{-2})$};
        \draw[gray!40, fill=gray!40] (14.7,0) circle (1);
        \node at (12.4,-1.4) {$(R_2)$ Trivial Knot Relation};
   \end{tikzpicture}}
\end{center}
\begin{center}\resizebox{0.9\width}{!}{
   \begin{tikzpicture}
        \draw[gray!40, thick, fill=gray!40, domain=-45:225] plot ({cos(\x)}, {sin(\x)}) to[out=45, in=130] (0.71, -0.71);
        \draw[thick] (-0.71, -0.71) to[out=45, in=135] (0.71, -0.71);
        \node[draw, circle, inner sep=0pt, minimum size=4pt, fill=black] (p1) at (0,-0.41) {};
        \draw[thick] (p1) to[out=45, in=-120] (0.31,0) to[out=60, in=0] (0,0.45) to[out=180, in=120] (-0.31,0) to[out=-60, in=150] (-0.15,-0.2);
        \node (s1) at (-0.6,-0.3) {$-$};
        \node (s2) at (0.6,-0.3) {$+$};
        \node[text width=1.4cm] at (2,0) {$= q^{-1/2}$};
        \draw[gray!40, thick, fill=gray!40, domain=-45:225] plot ({3.8+cos(\x)}, {sin(\x)}) to[out=45, in=130] (4.51, -0.71);
        \draw[thick] (3.09, -0.71) to[out=45, in=130] (4.51, -0.71);
        \node[draw, circle, inner sep=0pt, minimum size=4pt, fill=black] (p2) at (3.8,-0.41) {};
        \node at (2,-1.4) {$(R_3)$ Trivial Arc Relation 1};
        \draw[gray!40, thick, fill=gray!40, domain=-45:225] plot ({7.5+cos(\x)}, {sin(\x)}) to[out=45, in=130] (8.21, -0.71);
        \draw[thick] (6.79, -0.71) to[out=45, in=130] (8.21, -0.71);
        \node[draw, circle, inner sep=0pt, minimum size=4pt, fill=black] (p3) at (7.5,-0.41) {};
        \draw[thick] (p3) to[out=45, in=-120] (7.81,0) to[out=60, in=0] (7.5,0.45) to[out=180, in=120] (7.19,0) to[out=-60, in=150] (7.35,-0.2);
        \node (s1) at (6.9,-0.3) {$-$};
        \node (s2) at (8.1,-0.3) {$-$};
        \node[text width=1.1cm] at (9.25,0) {$= 0 =$};
        \draw[gray!40, thick, fill=gray!40, domain=-45:225] plot ({11+cos(\x)}, {sin(\x)}) to[out=45, in=130] (11.71, -0.71);
        \draw[thick] (10.29, -0.71) to[out=45, in=130] (11.71, -0.71);
        \node[draw, circle, inner sep=0pt, minimum size=4pt, fill=black] (p4) at (11,-0.41) {};
        \draw[thick] (p4) to[out=45, in=-120] (11.31,0) to[out=60, in=0] (11,0.45) to[out=180, in=120] (10.69,0) to[out=-60, in=150] (10.85,-0.2);
        \node (s1) at (10.4,-0.3) {$+$};
        \node (s2) at (11.6,-0.3) {$+$};
        \node at (9.25,-1.4) {$(R_4)$ Trivial Arc Relation 2};
   \end{tikzpicture}}
\end{center}
\begin{center}\resizebox{0.9\width}{!}{
   \begin{tikzpicture}
        \draw[gray!40, thick, fill=gray!40, domain=-45:225] plot ({cos(\x)}, {sin(\x)}) to[out=45, in=130] (0.71, -0.71);
        \draw[thick] (-0.71, -0.71) to[out=45, in=135] (0.71, -0.71);
        \node[draw, circle, inner sep=0pt, minimum size=4pt, fill=black] (p1) at (0,-0.41) {};
        \draw[thick] (-0.1, -0.26) -- (-0.71,0.71);
        \draw[thick] (p1) -- (0.71,0.71);
        \node (s1) at (-0.6,-0.3) {$+$};
        \node (s2) at (0.6,-0.3) {$-$};
        \node[text width=1.1cm] at (1.9,0) {$= q^{2}$};
        \draw[gray!40, thick, fill=gray!40, domain=-45:225] plot ({3.5+cos(\x)}, {sin(\x)}) to[out=45, in=130] (4.21, -0.71);
        \draw[thick] (2.79, -0.71) to[out=45, in=130] (4.21, -0.71);
        \node[draw, circle, inner sep=0pt, minimum size=4pt, fill=black] (p2) at (3.5,-0.41) {};
        \draw[thick] (3.4,-0.26) -- (2.79,0.71);
        \draw[thick] (p2) -- (4.21,0.71);
        \node (s3) at (2.9,-0.3) {$-$};
        \node (s4) at (4.1,-0.3) {$+$};
        \node[text width=1.4cm] at (5.4,0) {$+$ $q^{-1/2}$};
        \draw[gray!40, thick, fill=gray!40, domain=-45:225] plot ({7.25+cos(\x)}, {sin(\x)}) to[out=45, in=130] (7.96, -0.71);
        \draw[thick] (6.54, -0.71) to[out=45, in=130] (7.96, -0.71);
        \node[draw, circle, inner sep=0pt, minimum size=4pt, fill=black] (p3) at (7.25,-0.41) {};
        \draw[thick] (6.54, 0.71) to[out=-60, in=180] (7.25,0) to[out=0, in=240] (7.96,0.71);
        \node at (3.5,-1.4) {$(R_5)$ State Exchange Relation};
   \end{tikzpicture}}
\end{center}

Using relations $(R_1) - (R_4)$, we see that $(R_5)$ is equivalent to the following height exchange relation.
\begin{center}
    \begin{tikzpicture}
        \draw[gray!40, thick, fill=gray!40, domain=-45:225] plot ({cos(\x)}, {sin(\x)}) to[out=45, in=130] (0.71, -0.71);
        \draw[thick] (-0.71, -0.71) to[out=45, in=135] (0.71, -0.71);
        \node[draw, circle, inner sep=0pt, minimum size=4pt, fill=black] (p1) at (0,-0.41) {};
        \draw[thick] (p1) -- (-0.71,0.71);
        \draw[thick] (0.71,0.71) -- (0.1,-0.26);
        \node (s1) at (-0.6,-0.3) {$+$};
        \node (s2) at (0.6,-0.3) {$-$};
        \node[text width=1.1cm] at (1.8,0) {$= q^{-3}$};
        \draw[gray!40, thick, fill=gray!40, domain=-45:225] plot ({3.5+cos(\x)}, {sin(\x)}) to[out=45, in=130] (4.21, -0.71);
        \draw[thick] (2.79, -0.71) to[out=45, in=130] (4.21, -0.71);
        \node[draw, circle, inner sep=0pt, minimum size=4pt, fill=black] (p2) at (3.5,-0.41) {};
        \draw[thick] (3.4,-0.26) -- (2.79,0.71);
        \draw[thick] (p2) -- (4.21,0.71);
        \node (s3) at (2.9,-0.3) {$+$};
        \node (s4) at (4.1,-0.3) {$-$};
        \node[text width=3.5cm] at (6.5,0) {$+$ $q^{-3/2} \left( q^2 - q^{-2} \right)$};
        \draw[gray!40, thick, fill=gray!40, domain=-45:225] plot ({9.25+cos(\x)}, {sin(\x)}) to[out=45, in=130] (9.96, -0.71);
        \draw[thick] (8.54, -0.71) to[out=45, in=130] (9.96, -0.71);
        \node[draw, circle, inner sep=0pt, minimum size=4pt, fill=black] (p3) at (9.25,-0.41) {};
        \draw[thick] (8.54, 0.71) to[out=-60, in=180] (9.25,0) to[out=0, in=240] (9.96,0.71);
        \node at (4.6,-1.4) {$(R_6)$ Height Exchange Relation};
    \end{tikzpicture}
\end{center}

Lastly, we can also quickly find all trivial arc relations. Below are the trivial arc relations corresponding to different states, $(R_3)$.
$$\begin{tikzpicture}[baseline=-3]
    \draw[gray!40, thick, fill=gray!40, domain=-45:225] plot ({cos(\x)}, {sin(\x)}) to[out=45, in=130] (0.71, -0.71);
    \draw[thick] (-0.71, -0.71) to[out=45, in=135] (0.71, -0.71);
    \node[draw, circle, inner sep=0pt, minimum size=4pt, fill=black] (p1) at (0,-0.41) {};
    \draw[thick] (p1) to[out=135, in=-60] (-0.31, 0) to[out=120, in=180] (0, 0.45) to[out=0, in=60] (0.31, 0) to[out=-120, in=30] (0.15,-0.2);
    \node (s1) at (-0.6,-0.3) {$+$};
    \node (s2) at (0.6,-0.3) {$-$};
\end{tikzpicture} = -q^{5/2} \quad\quad
\begin{tikzpicture}[baseline=-3]
    \draw[gray!40, thick, fill=gray!40, domain=-45:225] plot ({cos(\x)}, {sin(\x)}) to[out=45, in=130] (0.71, -0.71);
    \draw[thick] (-0.71, -0.71) to[out=45, in=135] (0.71, -0.71);
    \node[draw, circle, inner sep=0pt, minimum size=4pt, fill=black] (p1) at (0,-0.41) {};
    \draw[thick] (p1) to[out=45, in=-120] (0.31,0) to[out=60, in=0] (0,0.45) to[out=180, in=120] (-0.31,0) to[out=-60, in=150] (-0.15,-0.2);
    \node (s1) at (-0.6,-0.3) {$-$};
    \node (s2) at (0.6,-0.3) {$+$};
\end{tikzpicture} = q^{1/2} \quad\quad
\begin{tikzpicture}[baseline=-3]
    \draw[gray!40, thick, fill=gray!40, domain=-45:225] plot ({cos(\x)}, {sin(\x)}) to[out=45, in=130] (0.71, -0.71);
    \draw[thick] (-0.71, -0.71) to[out=45, in=135] (0.71, -0.71);
    \node[draw, circle, inner sep=0pt, minimum size=4pt, fill=black] (p1) at (0,-0.41) {};
    \draw[thick] (p1) to[out=45, in=-120] (0.31,0) to[out=60, in=0] (0,0.45) to[out=180, in=120] (-0.31,0) to[out=-60, in=150] (-0.15,-0.2);
    \node (s1) at (-0.6,-0.3) {$+$};
    \node (s2) at (0.6,-0.3) {$-$};
\end{tikzpicture} = -q^{-5/2} \quad\quad
\begin{tikzpicture}[baseline=-3]
    \draw[gray!40, thick, fill=gray!40, domain=-45:225] plot ({cos(\x)}, {sin(\x)}) to[out=45, in=130] (0.71, -0.71);
    \draw[thick] (-0.71, -0.71) to[out=45, in=135] (0.71, -0.71);
    \node[draw, circle, inner sep=0pt, minimum size=4pt, fill=black] (p1) at (0,-0.41) {};
    \draw[thick] (p1) to[out=135, in=-60] (-0.31, 0) to[out=120, in=180] (0, 0.45) to[out=0, in=60] (0.31, 0) to[out=-120, in=30] (0.15,-0.2);
    \node (s1) at (-0.6,-0.3) {$-$};
    \node (s2) at (0.6,-0.3) {$+$};
\end{tikzpicture} = q^{-1/2}$$
The first two diagrams, as well as the last two diagrams, differ by a twist, which introduces a multiplicative factor of $-q^{-3}$. For consistency and convenience, we adopt L\^{e}'s notation from \cite{Le18} for these constants. That is,
$$C_{+}^{+} = 0, \quad\quad
C_{+}^{-} = -q^{-5/2}, \quad\quad
C_{-}^{+} = q^{-1/2}, \quad\quad
C_{-}^{-} = 0.$$

Analogous to the Kauffman bracket, if $M = \Sigma \times I$ and $\mathcal{N} = \mathcal{P} \times I$ for $\mathcal{P} \subset \partial \Sigma$, then we can endow $\mathscr{S}(M, \mathcal{N})$ with an algebra structure by defining a product $\alpha \alpha'$ as stacking $\alpha$ above $\alpha'$.
\begin{center}
    \begin{tikzpicture}
        \draw[thick, fill=gray!40] (-1,1) -- (1,1) -- (1,-1) -- (-1,-1) -- (-1,1);
        \node[draw, circle, inner sep=0pt, minimum size=4pt, fill=black] (p11) at (-1,0) {};
        \node[draw, circle, inner sep=0pt, minimum size=4pt, fill=black] (p12) at (0,1) {};
        \node[draw, circle, inner sep=0pt, minimum size=4pt, fill=black] (p13) at (1,0) {};
        \node[draw, circle, inner sep=0pt, minimum size=4pt, fill=black] (p14) at (0,-1) {};
        \draw[thick] (p12) -- (p14);
        \node at (0.2, 0.8) {\scriptsize{$+$}};
        \node at (0.2, -0.8) {\scriptsize{$+$}};
        \node at (0, -1.3) {$\alpha$};
        \node at (1.5,0) {$\cdot$};
        \draw[thick, fill=gray!40] (2,1) -- (4,1) -- (4,-1) -- (2,-1) -- (2,1);
        \node[draw, circle, inner sep=0pt, minimum size=4pt, fill=black] (p21) at (2,0) {};
        \node[draw, circle, inner sep=0pt, minimum size=4pt, fill=black] (p22) at (3,1) {};
        \node[draw, circle, inner sep=0pt, minimum size=4pt, fill=black] (p23) at (4,0) {};
        \node[draw, circle, inner sep=0pt, minimum size=4pt, fill=black] (p24) at (3,-1) {};
        \draw[thick] (p21) -- (p23);
        \node at (2.2, 0.2) {\scriptsize{$-$}};
        \node at (3.8, 0.2) {\scriptsize{$-$}};
        \node at (3, -1.3) {$\alpha'$};
        \node at (4.5,0) {$=$};
        \draw[thick, fill=gray!40] (5,1) -- (7,1) -- (7,-1) -- (5,-1) -- (5,1);
        \node[draw, circle, inner sep=0pt, minimum size=4pt, fill=black] (p31) at (5,0) {};
        \node[draw, circle, inner sep=0pt, minimum size=4pt, fill=black] (p32) at (6,1) {};
        \node[draw, circle, inner sep=0pt, minimum size=4pt, fill=black] (p33) at (7,0) {};
        \node[draw, circle, inner sep=0pt, minimum size=4pt, fill=black] (p34) at (6,-1) {};
        \draw[thick] (p31) -- (5.9,0);
        \draw[thick] (6.1,0) -- (p33);
        \draw[thick] (p32) -- (p34);
        \node at (6.2, 0.8) {\scriptsize{$+$}};
        \node at (6.2, -0.8) {\scriptsize{$+$}};
        \node at (5.2, 0.2) {\scriptsize{$-$}};
        \node at (6.8, 0.2) {\scriptsize{$-$}};
        \node at (6, -1.3) {$\alpha\alpha'$};
    \end{tikzpicture}
\end{center}

Throughout this work, we restrict ourselves to manifolds with at most one boundary component and only one marking. Additionally, the boundary of our manifolds should be obvious. For these reasons, we will often omit $\mathcal{N}$ in the notation. Additionally, when $M = \Sigma \times I$ for a marked surface $\Sigma$, we write $\mathscr{S}(\Sigma)$ for $\mathscr{S}(M)$ to emphasize the additional algebra structure, which comes from stacking in the vertical direction.

The familiar reader will recognize that this definition of a stated skein algebra slightly differs from the typical definition found in the literature. Our approach to stated tangles is somewhat closer to the notion of \textit{ideal arcs} described in \cite{CL22, LY22}. We will discuss and use the conventional model later in Section \ref{section:mods3mflds}.

\subsection{Generators of $\mathscr{S}(T^2 \setminus D^2)$}\label{section:Generators}
In this section we will show that the algebra $\mathscr{S}(T^2 \setminus D^2)$ with one marking is generated by the twelve elements $B = \{ X_{1,0}(\mu_1, \nu_1), X_{2,0}(\mu_2, \nu_2), X_{3,0}(\mu_3, \nu_3) \, \mid \mu_i, \nu_i \in \{ \pm \} \}$, where
$$X_{1,0}(\mu_1, \nu_1) =
\begin{tikzpicture}[baseline=-1]
    \draw[draw=none, fill=gray!40] (0,1) -- (2,1) -- (2,-1) -- (0,-1) -- (1, -0.15) to[out=0, in=-90] (1.15, 0) to[out=90, in=0] (1, 0.15) to[out=180, in=90] (0.85, 0) to[out=-90, in=180] (1, -0.15) -- (0, -1) -- (0,1);
    \draw[thick] (0, 0.15) -- (0.85, 0.15);
    \draw[thick] (1, 0.15) -- (2, 0.15);
    \draw[thick, ->] (0,1) -- (0.9,1);
    \draw[thick, ->] (0.85,1) -- (1.2,1);
    \draw[thick] (1.2,1) -- (2,1);
    \draw[thick, ->] (0,-1) -- (0.9,-1);
    \draw[thick, ->] (0.85,-1) -- (1.2,-1);
    \draw[thick] (1.2,-1) -- (2,-1);
    \draw[thick, ->] (0,-1) -- (0,0);
    \draw[thick] (0,-1) -- (0,1);
    \draw[thick, ->] (2,-1) -- (2,0);
    \draw[thick] (2,-1) -- (2,1);
    \draw[thick, black] (1, 0) circle (0.15);
    \node[draw, circle, inner sep=0pt, minimum size=3pt, fill=white] at (1, 0.15) {};
    \node at (0.7, 0.3) {\scriptsize{$\mu_1$}};
    \node at (1.3, 0.3)  {\scriptsize{$\nu_1$}};
\end{tikzpicture} \quad
X_{2,0}(\mu_2, \nu_2) = \begin{tikzpicture}[baseline=-1]
    \draw[draw=none, fill=gray!40] (4,1) -- (6,1) -- (6,-1) -- (4,-1) -- (5, -0.15) to[out=0, in=-90] (5.15, 0) to[out=90, in=0] (5, 0.15) to[out=180, in=90] (4.85, 0) to[out=-90, in=180] (5, -0.15) -- (4, -1) -- (4,1);
    \draw[thick] (4.7, -1) to[out=90, in=270] (4.7, 0) to[out=90, in=180] (4.85, 0.15);
    \draw[thick] (5, 0.15) to[out=120, in=270] (4.7, 1);
    \draw[thick, ->] (4,1) -- (4.9,1);
    \draw[thick, ->] (4.85,1) -- (5.2,1);
    \draw[thick] (5.2,1) -- (6,1);
    \draw[thick, ->] (4,-1) -- (4.9,-1);
    \draw[thick, ->] (4.85,-1) -- (5.2,-1);
    \draw[thick] (5.2,-1) -- (6,-1);
    \draw[thick, ->] (4,-1) -- (4,0);
    \draw[thick] (4,-1) -- (4,1);
    \draw[thick, ->] (6,-1) -- (6,0);
    \draw[thick] (6,-1) -- (6,1);
    \draw[thick, black] (5, 0) circle (0.15);
    \node[draw, circle, inner sep=0pt, minimum size=3pt, fill=white] at (5, 0.15) {};
    \node at (4.5, 0) {\scriptsize{$\mu_2$}};
    \node at (5.2, 0.3)  {\scriptsize{$\nu_2$}};
\end{tikzpicture}\quad
X_{3,0}(\mu_3, \nu_3) =\begin{tikzpicture}[baseline=-1]
    \draw[draw=none, fill=gray!40] (8,1) -- (10,1) -- (10,-1) -- (8,-1) -- (9, -0.15) to[out=0, in=-90] (9.15, 0) to[out=90, in=0] (9, 0.15) to[out=180, in=90] (8.85, 0) to[out=-90, in=180] (9, -0.15) -- (8, -1) -- (8,1);
    \draw[thick] (8.4, -1) -- (8.4, -0.2) to[out=90, in=180] (8.85, 0.15);
    \draw[thick] (9, 0.15) to[out=45, in=180] (10, 0.6);
    \draw[thick] (8, 0.6) to[out=0, in=270] (8.4, 1);
    \draw[thick, ->] (8,1) -- (8.9,1);
    \draw[thick, ->] (8.85,1) -- (9.2,1);
    \draw[thick] (9.2,1) -- (10,1);
    \draw[thick, ->] (8,-1) -- (8.9,-1);
    \draw[thick, ->] (8.85,-1) -- (9.2,-1);
    \draw[thick] (9.2,-1) -- (10,-1);
    \draw[thick, ->] (8,-1) -- (8,0);
    \draw[thick] (8,-1) -- (8,1);
    \draw[thick, ->] (10,-1) -- (10,0);
    \draw[thick] (10,-1) -- (10,1);
    \draw[thick, black] (9, 0) circle (0.15);
    \node[draw, circle, inner sep=0pt, minimum size=3pt, fill=white] at (9, 0.15) {};
    \node at (8.7, 0.3) {\scriptsize{$\mu_3$}};
    \node at (9.4, 0.15)  {\scriptsize{$\nu_3$}};
\end{tikzpicture}$$

As we're only considering $\mathscr{S}(T^2 \setminus D^2)$ with a single marking, there is only one simple parallel tangle and one boundary curve.
Let $Y_1$, $Y_2$, and $Y_3$ be the meridian, longitude, and (1,1)-curve, respectively.
The boundary curve can be expressed completely in terms of the $Y_i$ and constants, as a quick calculation shows that
\begin{align*}
    \begin{tikzpicture}[baseline=-1]
        \MarkedTorusBackground
        \draw[thick, fill=none] (1,0) circle (0.5);
        \node[draw, circle, inner sep=0pt, minimum size=3pt, fill=white] at (1, 0.15) {};
    \end{tikzpicture} = q Y_1 Y_2 Y_3 - q^2 Y_1^2 - q^{-2} Y_2^2 - q^2 Y_3^2 + q^2 + q^{-2},
\end{align*}
and for any $\mu, \nu \in \left\{\pm\right\}$,
\begin{align*}
    \begin{tikzpicture}[baseline=-1]
        \MarkedTorusBackground
        \draw[thick] (1, 0.15) to[out=0, in=90] (1.3, -0.2) to[out=270, in=0] (1, -0.5) to[out=180, in=270] (0.7, -0.2) to[out=90, in=210] (0.87, 0.15);
        \node[draw, circle, inner sep=0pt, minimum size=3pt, fill=white] at (1, 0.15) {};
        \node at (0.7, 0.2) {$\mu$};
        \node at (1.3, 0.2) {$\nu$};
    \end{tikzpicture} = q Y_1 Y_2 X_{3,0}\left(\mu, \nu\right) - q^2 X_{1,0}\left(\mu, \nu\right) Y_1 - q^{-2} X_{2,0}\left(\mu, \nu\right) Y_2 - q^2 X_{3,0}\left(\mu, \nu\right) Y_3 - C_{\mu}^{\nu}.
\end{align*}

Consider the stated skein algebra generated by $B$. Using the state-exchange relation, we can express each $Y_i$ in this algebra as
$$Y_i = q^{1/2} X_{i,0}(+,-) - q^{5/2} X_{i,0}(-,+)$$
for $i=\{1,2,3\}$. Moreover, we can interchange heights using the proper height exchange relation. We will disregard the states in the following lemmas, as the proofs are independent of the possible states. Additionally, we will overlook relative heights along our marking, as we can readily obtain all possible heights using $(R_6)$ alongside $Y_1$, $Y_2$, and $Y_3$.

While simple closed (non-boundary) curves on the torus (with boundary) are conventionally classified by their slopes, which are numbers in $\mathbb{Q} \cup \frac{1}{0}$, one might expect a similar classification for our tangles, considering that each tangle must start and end at the same marking.
However, this analogy breaks down when considering tangles with what we'll call ``twists'' around the boundary.

For instance, if we trace along the path of $X_{3,0}$ and introduce a twist around the boundary just before reaching the marking, we get a different element back.
$$\begin{tikzpicture}[baseline=-1]
    \MarkedTorusBackground
    \draw[thick] (0, -1) to[out=45, in=180] (1, 0.15) to[out=60, in=225] (2, 1);
    \node[draw, circle, inner sep=0pt, minimum size=3pt, fill=white] at (1, 0.15) {};
\end{tikzpicture}
\quad\rightsquigarrow\quad
\begin{tikzpicture}[baseline=-1]
    \MarkedTorusBackground
    \draw[thick] (2, 1) -- (1, 0.15) to[out=0, in=60] (1.3, -0.3) to[out=240, in=0] (1,-0.45) to[out=180, in=45] (0, -1);
    \node[draw, circle, inner sep=0pt, minimum size=3pt, fill=white] at (1, 0.15) {};
\end{tikzpicture}$$
As these twists resemble half Dehn twists around the boundary, we distinguish these twists by indexing over $\frac{1}{2}\mathbb{Z}$ instead of $\mathbb{Z}$. In particular, $X_{i, r}$ is a \textit{full} Dehn twist of $X_{i, r-1}$.
The following examples illustrate $X_{1,r}$ for various choices of $r \in \frac{1}{2}\mathbb{Z}$.
$$
\begin{tikzpicture}
    \MarkedTorusBackground
    \draw[thick] (1, 0.15) to[out=180, in=90] (0.5, -0.2) to[out=270, in=180] (1, -0.75) to[out=0, in=180] (2, 0.15);
    \draw[thick] (1, 0.15) to[out=180, in=120] (0.7, -0.3) to[out=300, in=180] (1,-0.45) to[out=0, in=270] (1.5, 0.1) to[out=90, in=0] (1, 0.6) to[out=180, in=0] (0, 0.15);
    \node[draw, circle, inner sep=0pt, minimum size=3pt, fill=white] at (1, 0.15) {};
    \node at (1,-1.5) {$X_{1,-1}$};
\end{tikzpicture}
\begin{tikzpicture}
    \MarkedTorusBackground
    \draw[thick] (0, 0.15) -- (1, 0.15) to[out=180, in=120] (0.7, -0.3) to[out=300, in=180] (1,-0.45) to[out=0, in=180] (2, 0.15);
    \node[draw, circle, inner sep=0pt, minimum size=3pt, fill=white] at (1, 0.15) {};
    \node at (1,-1.5) {$X_{1,-\frac{1}{2}}$};
    \node (space) at (-1, 0) {};
\end{tikzpicture}
\begin{tikzpicture}
    \MarkedTorusBackground
    \draw[thick] (0, 0.15) -- (1, 0.15) -- (2, 0.15);
    \node[draw, circle, inner sep=0pt, minimum size=3pt, fill=white] at (1, 0.15) {};
    \node at (1,-1.5) {$X_{1,0}$};
    \node (space) at (-1, 0) {};
\end{tikzpicture}
\begin{tikzpicture}
    \MarkedTorusBackground
    \draw[thick] (2, 0.15) -- (1, 0.15) to[out=0, in=60] (1.3, -0.3) to[out=240, in=0] (1,-0.45) to[out=180, in=0] (0, 0.15);
    \node[draw, circle, inner sep=0pt, minimum size=3pt, fill=white] at (1, 0.15) {};
    \node at (1,-1.5) {$X_{1,\frac{1}{2}}$};
    \node (space) at (-1, 0) {};
\end{tikzpicture}
\begin{tikzpicture}
    \MarkedTorusBackground
    \draw[thick] (1, 0.15) to[out=0, in=90] (1.5, -0.2) to[out=270, in=0] (1, -0.75) to[out=180, in=0] (0, 0.15);
    \draw[thick] (1, 0.15) to[out=0, in=60] (1.3, -0.3) to[out=240, in=0] (1,-0.45) to[out=180, in=270] (0.5, 0.1) to[out=90, in=180] (1, 0.6) to[out=0, in=180] (2, 0.15);
    \node[draw, circle, inner sep=0pt, minimum size=3pt, fill=white] at (1, 0.15) {};
    \node at (1,-1.5) {$X_{1,1}$};
    \node (space) at (-1, 0) {};
\end{tikzpicture}
$$

Given any marked surface, $\Sigma$, L\^{e} showed in \cite{Le18} that the set of all isotopy classes of increasingly stated, simple $\partial\Sigma$-tangle diagrams forms a basis for $\mathscr{S}(\Sigma)$. Therefore, our approach will be to first classify all possible simple (stateless) tangles in $T^2 \setminus D^2$ that start and end on $x \in \partial \left( T^2 \setminus D^2 \right)$ and then demonstrate that all basis elements are contained in the subalgebra generated by $\left\{ X_{1,0}(\mu_1, \nu_1), X_{2,0}(\mu_2, \nu_2), X_{3,0}(\mu_3, \nu_3)\right\}$ for all $\mu_i, \nu_j \in \{ \pm \}$.

In Theorem~\ref{theorem:classification}, we classify a large class of closed curves by their slope and number of twists. We construct this classification by covering a neighborhood of $\partial \left( T^2 \setminus D^2 \right)$ with an annulus and proceed to use a Seifert-Van Kampen-style approach to calculate the slope outside the annulus as well as determine the number of twists inside the annulus.

To clean things up a bit, we'll introduce some more notation. 

\begin{definition} We will denote the following set of tuples as
\[ \mathcal{I} := \left.\left\{ (p,q,r) \in \mathbb{Z} \times \mathbb{Z} \times \frac{1}{2}\mathbb{Z} \, \mid \gcd(p,q) = 1 \right\} \right/ \sim \]
where $(p,q,r) \sim (-p,-q,r)$. Additionally, for any $x \in T^2 \setminus D^2$, define $\Omega_{x}$ to be the set of isotopy classes of simple unoriented closed non-parallel curves that begin and end on $x$. 
\end{definition}
Any time we refer to a representative or an isotopic representative of an element in $\Omega_x$, we are always assuming that this representative begins and ends at the point $x$.

\begin{theorem}\label{theorem:classification}
    Let $x \in \partial \left( T^2 \setminus D^2 \right)$. There exists a bijection $f_{A}: \mathcal{I} \longrightarrow \Omega_x$.
\end{theorem}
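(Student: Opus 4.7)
The plan is to define $f_A$ explicitly and then verify bijectivity by constructing an inverse via a decomposition argument that separates a \emph{slope part} from a \emph{twist part}. First I would fix a small annular collar $A$ of $\partial(T^2 \setminus D^2)$ containing $x$, so that $(T^2 \setminus D^2) \setminus \mathring{A}$ is again a once-punctured torus whose boundary circle $\partial A$ is disjoint from $x$. For $(p,q,r) \in \mathcal{I}$, I would build $f_A(p,q,r)$ by taking the straight $(p,q)$-slope curve on $T^2$ in the square model, pushing its unique passage through $x$ radially off of the hole, and then inserting $r$ half-twists around $\partial D$ inside $A$ before returning to $x$. With the convention that the untwisted representative ($r=0$) matches the paper's $X_{i,0}$ pictures, a full Dehn twist of $\partial D$ changes $r$ by $1$, while a ``half-twist''---rerouting the end of the arc past $x$ onto the opposite side of the hole---changes $r$ by $\tfrac{1}{2}$.

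For surjectivity, given an arbitrary $\alpha \in \Omega_x$, I would put $\alpha$ in general position with respect to $\partial A$ and reduce, by a standard innermost-bigon/disk argument applied inside both $A$ and $(T^2 \setminus D^2) \setminus \mathring{A}$, to the case where $\alpha \cap \partial A$ consists of exactly two points. Then $\alpha$ cleaves into an outer arc in a once-punctured torus with both endpoints on its boundary and an inner arc in an annulus joining a boundary point to $x$. Closing up the outer arc through the hole recovers a simple closed curve on $T^2$, whose homology class is either $0$ (which the non-parallel hypothesis on $\alpha$ rules out) or primitive, yielding $(p,q) \in \Z^2$ with $\gcd(p,q) = 1$ defined up to simultaneous sign change; the inner arc is classified up to isotopy rel endpoints (with one endpoint pinned at $x$) by the number $r \in \tfrac{1}{2}\Z$ of half-twists it makes around the core of $A$. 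The equivalence $(p,q,r) \sim (-p,-q,r)$ then reflects the fact that, because tangles in $\Omega_x$ are unoriented, reversing direction of travel negates $(p,q)$ while preserving the geometric twisting around $\partial D$.

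For injectivity, if $f_A(p,q,r)$ and $f_A(p',q',r')$ are isotopic in $T^2 \setminus D^2$, then capping off through the hole forces $(p,q) = \pm(p',q')$ by the standard classification of simple closed curves on the torus by primitive homology class. Reducing to the case of equal slopes, the two arcs differ only by twisting inside $A$, and since the boundary Dehn twist about $\partial D$ acts freely on isotopy classes of arcs with fixed endpoint $x$, one concludes $r = r'$. The main obstacle I anticipate is the surjectivity step: carefully executing the innermost-disk/bigon reduction to $|\alpha \cap \partial A| = 2$, and extracting a clean, isotopy-invariant combinatorial definition of the half-integer twist $r$. In particular, justifying the $\tfrac{1}{2}\Z$ grading (rather than $\Z$) requires verifying that no isotopy rel $x$ can convert an integer-twist arc into a half-integer-twist one, so that the arc terminating at $x$ from the ``opposite side'' of the hole genuinely contributes the extra factor of $\tfrac{1}{2}$.
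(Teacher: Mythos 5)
Your overall strategy is the same as the paper's: isolate an annular collar $A$ of the boundary, read off the slope $(p,q)$ from the part of the curve outside $A$ by capping off through the hole, and read off the twist $r$ from the part inside $A$. However, there is a concrete gap at exactly the point you flag as the main obstacle, and it stems from a structural misdescription of the inner piece. After reducing to $|\alpha \cap \partial A| = 2$, the portion of $\alpha$ inside $A$ is not ``an inner arc in an annulus joining a boundary point to $x$'': it consists of \emph{two} arcs $\hat\alpha_1, \hat\alpha_2$, each running from a point of $\partial A$ to $x$, disjoint except at their common endpoint $x$. A single such arc is classified up to isotopy by an integer (the mapping class group of the annulus is $\mathbb{Z}$, generated by the boundary Dehn twist), so your model produces only an integer invariant and gives no mechanism for the $\tfrac{1}{2}\mathbb{Z}$ grading. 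You cannot get half-integers out of one arc; you get them out of the \emph{pair}.

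The paper's resolution is the missing ingredient: assign to $\hat\alpha_1$ an integer winding $r_{1,\alpha}$ (so $\hat\alpha_1 \simeq \sigma^{r_{1,\alpha}} c_1$ for a fixed reference arc $c_1$ and the Dehn twist $\sigma$), then cut $A$ open along $\hat\alpha_1$ to get a square and observe that simplicity leaves exactly two isotopy classes for $\hat\alpha_2$, distinguished by which side of $x$ the second strand arrives on --- detected by the sign of $\det(v_1\, v_2)$ for the two tangent vectors at $x$. This defines $r_{2,\alpha} \in \{r_{1,\alpha}, r_{1,\alpha}-1\}$ and $r = \tfrac{1}{2}(r_{1,\alpha}+r_{2,\alpha})$, whose half-integrality and isotopy invariance follow from the uniqueness of the decomposition $r = \tfrac{r_1+r_2}{2}$ with $r_1 - r_2 \in \{0,1\}$. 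Your injectivity argument has the same issue: ``the boundary Dehn twist acts freely on isotopy classes of arcs with fixed endpoint $x$'' is about a single arc, whereas what must be shown is that the pair $(r_{1,\alpha}, r_{2,\alpha})$, hence $r$, is determined by the isotopy class of the two-arc configuration rel $x$. Your surjectivity reduction to two intersection points with $\partial A$ and your identification of $(p,q)$ up to sign are fine and match the paper.
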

\begin{proof}
    We will construct $f_{A}$ by first constructing a map, $\widetilde{f}_{A}$, from $\mathcal{I}$ to $\widetilde{\Omega}_x$, the set of simple closed unoriented non-parallel curves that begin and end on $x$, and then projecting onto the corresponding set of isotopy classes of curves.
    \begin{center}
        \begin{tikzcd}
            \mathcal{I} \arrow[rr, hook, "\widetilde{f}_{A}"] \arrow[rrd, "f_{A}"'] & & \widetilde{\Omega}_x \arrow[d, two heads, "\pi_{\text{iso}}"] \\
            & & \Omega_x
        \end{tikzcd}
    \end{center}
    
    We'll first introduce some definitions. Let $\varepsilon > 0$ be sufficiently small. Define $B_{\varepsilon}(m,n)$ to be the open ball of radius $\varepsilon$ at point $(m,n) \in \mathbb{R}^2$, $E_{\varepsilon} := \mathbb{R}^{2} \setminus \left( \bigcup_{(m,n) \in \mathbb{Z}^2} B_{\varepsilon}(m,n) \right)$, and $\Phi_{\varepsilon}$ to be the restriction of the identity map on $\mathbb{R}^2$ to $E_{\varepsilon}$, composed with the obvious covering map.
    $$\Phi_{\varepsilon} : E_{\varepsilon} \longrightarrow T^2 \setminus D^2$$
    Let $A := \Phi_{\varepsilon}\left( \overline{E_{\epsilon} \setminus E_{2\varepsilon}} \right)$ be the closed annulus and define $\partial_A := \partial A \setminus \partial (T^2 \setminus D^2)$, the ``outer boundary''. The element $\partial_A$ will constantly serve as a reference point throughout the rest of this proof and is the dividing bridge between the $\{ p, q \}$ and $\{ r \}$ in the tuple.
    
    Choose any triple $(p,q,r) \in \mathcal{I}$. Let $\gamma_{p,q}$ be the line with slope $p/q$ going through the origin in $\mathbb{R}^2$ and consider $\gamma_0 := \Phi_{\varepsilon}\left( \gamma_{p,q} \right) \cap \left( T^2 \setminus A \right)$. Since $\gamma_{p,q}$ has constant slope, the two endpoints of $\gamma_{0}$, labeled $y_1$ and $y_2$, must lie on $\partial_A$ as antipodal points.
    \begin{center}
        \begin{tikzpicture}[scale=1.3]
            \draw[help lines, ystep=0.5, xstep=0.5, color=gray!40] (-4.9, -0.9) grid (-3.1, 0.9);
            \draw[thick] (-4.5, -0.9) -- (-3.5, 0.9);
            \node[text width=0.6cm] at (-4.3, 0.3) {$\gamma_{p,q}$};
            \draw[->] (-2.5, 0) -- (-1.5, 0);
            \draw[help lines, ystep=0.5, xstep=0.5, color=gray!40] (-0.9, -0.9) grid (0.9, 0.9);
            \draw[thick] (-0.5, -0.9) -- (0.5, 0.9);
            \foreach \x in {-0.5, 0, 0.5}
                \foreach \y in {-0.5, 0, 0.5}
                    \draw[thick, draw=violet, fill=white] (\x, \y) circle (0.08);
            \draw[->] (1.5, 0) -- (2.5, 0);
            \draw[draw=none, fill=gray!40] (3,1) -- (5,1) -- (5,-1) -- (3,-1) -- (4, -0.15) to[out=0, in=-90] (4.15, 0) to[out=90, in=0] (4, 0.15) to[out=180, in=90] (3.85, 0) to[out=-90, in=180] (4, -0.15) -- (3, -1) -- (3,1);
            \draw[thick, blue, ->] (3,1) -- (4,1);
            \draw[thick, blue] (3,1) -- (5,1);
            \draw[thick, blue, ->] (3,-1) -- (4,-1);
            \draw[thick, blue] (3,-1) -- (5,-1);
            \draw[thick, olive, ->] (3,-1) -- (3,0);
            \draw[thick, olive] (3,-1) -- (3,1);
            \draw[thick, olive, ->] (5,-1) -- (5,0);
            \draw[thick, olive] (5,-1) -- (5,1);
            \draw[thick] (3.5, -1) -- (4.5, 1);
            \draw[thick] (4.5, -1) -- (5, 0);
            \draw[thick] (3, 0) -- (3.5, 1);
            \draw[draw=red, fill=gray!40, dashed] (4, 0) circle[radius=15pt];
            \draw[thick, violet, fill=white] (4, 0) circle (0.15);
            \node[draw, circle, inner sep=0pt, minimum size=3pt, fill=white] at (4, 0.15) {};
            \node at (3.47, -0.55) {$y_1$};
            \node at (4.55, 0.5) {$y_2$};
            \node at (3.65, 0.65) {\footnotesize{$\partial_A$}};
        \end{tikzpicture}
    \end{center}
    
    Let $x_0$ be the unique point on $\partial_A$ without a unique geodesic to $x$ through $A$. Let $x_1, x_2 \in \partial_A$ be the midpoints between $x_0$ and its antipodal counterpart, labeled in clockwise order from $x_0$, and let $c_1$ be the geodesic path from $x_1$ to $x$ within $A$.
    \begin{center}
        \begin{tikzpicture}[scale=1.5]
            \draw[draw=none, fill=gray!40] (4,1) -- (6,1) -- (6,-1) -- (4,-1) -- (5, -0.15) to[out=0, in=-90] (5.15, 0) to[out=90, in=0] (5, 0.15) to[out=180, in=90] (4.85, 0) to[out=-90, in=180] (5, -0.15) -- (4, -1) -- (4,1);
            \draw[thick, blue, ->] (4,1) -- (5,1);
            \draw[thick, blue] (4,1) -- (6,1);
            \draw[thick, blue, ->] (4,-1) -- (5,-1);
            \draw[thick, blue] (4,-1) -- (6,-1);
            \draw[thick, olive, ->] (4,-1) -- (4,0);
            \draw[thick, olive] (4,-1) -- (4,1);
            \draw[thick, olive, ->] (6,-1) -- (6,0);
            \draw[thick, olive] (6,-1) -- (6,1);
            \draw[draw=none, fill=orange, opacity=0.7] (5,0) circle[radius=0.5cm];
            \node[draw, circle, inner sep=0pt, minimum size=3pt, fill=black] at (5.5, 0) {};
            \node[draw, circle, inner sep=0pt, minimum size=3pt, fill=black] at (4.5, 0) {};
            \node[draw, circle, inner sep=0pt, minimum size=3pt, fill=black] at (5, -0.5) {};
            \draw[thick, black, fill=white] (5, 0) circle (0.15);
            \node[draw, circle, inner sep=0pt, minimum size=3pt, fill=white] at (5, 0.15) {};
            \draw[dashed, blue] (4.5, 0) -- (5, 0.17);
            \node at (4.4, -0.15) {$x_1$};
            \node at (5.6, -0.15) {$x_2$};
            \node at (5, -0.7) {$x_0$};
            \node at (5, 0.3) {$x$};
            \node[blue] at (4.75, 0.2) {$c_1$};
        \end{tikzpicture}
    \end{center}

    We'll assume $y_1$ and $y_2$ are labeled such that $y_1$ is closer to $x_1$ on $\partial_A$. If they are both the same distance away from $x_1$, then we must be in the case some $y_i = x_0$, as $y_1$ and $y_2$ are antipodal points. If this happens, label this $y_i$ (the south pole) as $y_1$ and label the other (the north pole) as $y_2$. Let $\gamma_i$ be the geodesic paths (containing their endpoints) on $\partial_A$ from $y_i$ to $x_i$. This should correspond to a clockwise path when $\frac{p}{q}$ is a positive slope or equal to $\frac{1}{0}$, and a counterclockwise path when $\frac{p}{q}$ is a negative slope.\footnote{Our choice of clockwise rotation when the slope is $\frac{1}{0}$ was made for notational convenience outside of this proof. Alternatively, one could instead use a counterclockwise rotation by swapping the labels $y_1$ and $y_2$. The difference between these classifications results in a shift of $r$ by $1/2$ whenever $(p,q) = (1,0)$.} Define $\delta := \gamma_0 \sqcup \gamma_1 \sqcup \gamma_2$ and notice that this is simple.
    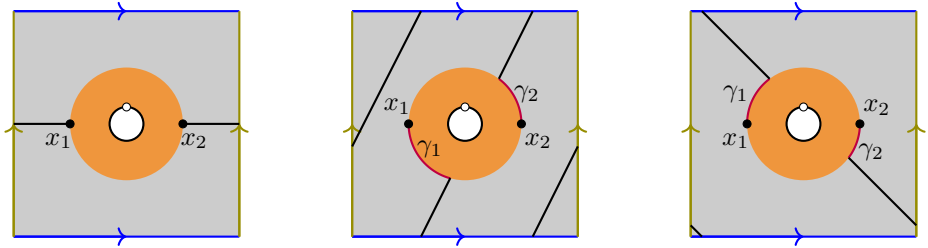
\begin{figure}[h]
    \begin{center}
        \begin{tikzpicture}[scale=1.5]
            \draw[draw=none, fill=gray!40] (0,1) -- (2,1) -- (2,-1) -- (0,-1) -- (1, -0.15) to[out=0, in=-90] (1.15, 0) to[out=90, in=0] (1, 0.15) to[out=180, in=90] (0.85, 0) to[out=-90, in=180] (1, -0.15) -- (0, -1) -- (0,1);
            \draw[thick, blue, ->] (0,1) -- (1,1);
            \draw[thick, blue] (0,1) -- (2,1);
            \draw[thick, blue, ->] (0,-1) -- (1,-1);
            \draw[thick, blue] (0,-1) -- (2,-1);
            \draw[thick, olive, ->] (0,-1) -- (0,0);
            \draw[thick, olive] (0,-1) -- (0,1);
            \draw[thick, olive, ->] (2,-1) -- (2,0);
            \draw[thick, olive] (2,-1) -- (2,1);
            \draw[draw=none, fill=orange, opacity=0.7] (1,0) circle[radius=0.5cm];
            \draw[thick] (0, 0) -- (0.5, 0);
            \draw[thick] (1.5, 0) -- (2, 0);
            \draw[thick, black, fill=white] (1, 0) circle (0.15);
            \node[draw, circle, inner sep=0pt, minimum size=3pt, fill=white] at (1, 0.15) {};
            \node[draw, circle, inner sep=0pt, minimum size=3pt, fill=black] at (1.5, 0) {};
            \node[draw, circle, inner sep=0pt, minimum size=3pt, fill=black] at (0.5, 0) {};
            \node at (0.4, -0.15) {$x_1$};
            \node at (1.6, -0.15) {$x_2$};
            \draw[draw=none, fill=gray!40] (3,1) -- (5,1) -- (5,-1) -- (3,-1) -- (4, -0.15) to[out=0, in=-90] (4.15, 0) to[out=90, in=0] (4, 0.15) to[out=180, in=90] (3.85, 0) to[out=-90, in=180] (4, -0.15) -- (3, -1) -- (3,1);
            \draw[thick, blue, ->] (3,1) -- (4,1);
            \draw[thick, blue] (3,1) -- (5,1);
            \draw[thick, blue, ->] (3,-1) -- (4,-1);
            \draw[thick, blue] (3,-1) -- (5,-1);
            \draw[thick, olive, ->] (3,-1) -- (3,0);
            \draw[thick, olive] (3,-1) -- (3,1);
            \draw[thick, olive, ->] (5,-1) -- (5,0);
            \draw[thick, olive] (5,-1) -- (5,1);
            \draw[draw=none, fill=orange, opacity=0.7] (4,0) circle[radius=0.5cm];
            \draw[thick, purple, domain=180:255] plot ({0.5*cos(\x)+4}, {0.5*sin(\x)});
            \draw[thick, purple, domain=0:53] plot ({0.5*cos(\x)+4}, {0.5*sin(\x)});
            \draw[thick] (3.61, -1) -- (3.87, -0.48);
            \draw[thick] (4.3, 0.4) -- (4.6, 1);
            \draw[thick] (4.6, -1) -- (5, -0.2);
            \draw[thick] (3, -0.2) -- (3.61, 1);
            \draw[thick, black, fill=white] (4, 0) circle (0.15);
            \node[draw, circle, inner sep=0pt, minimum size=3pt, fill=white] at (4, 0.15) {};
            \node[draw, circle, inner sep=0pt, minimum size=3pt, fill=black] at (4.5, 0) {};
            \node[draw, circle, inner sep=0pt, minimum size=3pt, fill=black] at (3.5, 0) {};
            \node at (3.4, 0.16) {$x_1$};
            \node at (4.65, -0.15) {$x_2$};
            \node at (3.69, -0.22) {$\gamma_1$};
            \node at (4.55, 0.27) {$\gamma_2$};
            \draw[draw=none, fill=gray!40] (6,1) -- (8,1) -- (8,-1) -- (6,-1) -- (7, -0.15) to[out=0, in=-90] (7.15, 0) to[out=90, in=0] (7, 0.15) to[out=180, in=90] (6.85, 0) to[out=-90, in=180] (7, -0.15) -- (6, -1) -- (6, 1);
            \draw[thick, blue, ->] (6,1) -- (7,1);
            \draw[thick, blue] (6,1) -- (8,1);
            \draw[thick, blue, ->] (6,-1) -- (7,-1);
            \draw[thick, blue] (6,-1) -- (8,-1);
            \draw[thick, olive, ->] (6,-1) -- (6,0);
            \draw[thick, olive] (6,-1) -- (6,1);
            \draw[thick, olive, ->] (8,-1) -- (8,0);
            \draw[thick, olive] (8,-1) -- (8,1);
            \draw[draw=none, fill=orange, opacity=0.7] (7,0) circle[radius=0.5cm];
            \draw[thick, purple, domain=127:180] plot ({0.5*cos(\x)+7}, {0.5*sin(\x)});
            \draw[thick, purple, domain=-37:0] plot ({0.5*cos(\x)+7}, {0.5*sin(\x)});
            \draw[thick] (6.1, 1) -- (6.7, 0.4);
            \draw[thick] (7.4, -0.3) -- (8, -0.9);
            \draw[thick] (6, -0.9) -- (6.1, -1);
            \draw[thick, black, fill=white] (7, 0) circle (0.15);
            \node[draw, circle, inner sep=0pt, minimum size=3pt, fill=white] at (7, 0.15) {};
            \node[draw, circle, inner sep=0pt, minimum size=3pt, fill=black] at (7.5, 0) {};
            \node[draw, circle, inner sep=0pt, minimum size=3pt, fill=black] at (6.5, 0) {};
            \node at (6.4, -0.15) {$x_1$};
            \node at (7.65, 0.15) {$x_2$};
            \node at (6.4, 0.27) {$\gamma_1$};
            \node at (7.6, -0.25) {$\gamma_2$};
        \end{tikzpicture}
        \caption[Constructing geodesic paths, $\gamma_i$]{Left: When $(p,q) = (0,1)$ we get $y_1 = x_1$ and $y_2 = x_2$ and so each $\gamma_i$ is trivial. Middle: When $\gamma_0$ has a positive slope, we trace out a geodesic path clockwise from $y_i$ to $x_i$. Right: When $\gamma_0$ has a negative slope we move counterclockwise instead.}
    \end{center}
    \end{figure}

    Recall that the mapping class group, $\text{Mod}\left(\Sigma\right)$, of a surface, $\Sigma$, is the group of isotopy classes of elements of $\operatorname{Homeo}_{+}(\Sigma, \partial \Sigma)$, which pointwise-fix $\partial \Sigma$. Also recall that the mapping class group of the annulus is $\text{Mod}(A) = \langle \sigma \rangle$ where $\sigma$ is the clockwise Dehn twist along the curve $\varsigma$, parallel to the boundaries.
    \begin{center}
        \begin{tikzpicture}[scale=1.5]
            \draw (1, 0) circle (1);
            \draw (1, 0) circle (0.2);
            \draw[thick, blue] (0, 0) -- (0.8, 0);
            \draw[dashed, red] (1, 0) circle (0.5);
            \node at (1.6, -0.2) {$\varsigma$};
            \draw[->] (2.5, 0) -- (3.5, 0);
            \node at (3, 0.2) {$\sigma$};
            \draw (5, 0) circle (1);
            \draw (5, 0) circle (0.2);
            \draw[thick, blue, domain=0:1] plot ({5+(1-0.8*\x)*cos(360*\x + 180)}, {(1-0.8*\x)*sin(360*\x + 180)});
        \end{tikzpicture}
    \end{center}
    For $r \in \frac{1}{2}\mathbb{Z}$, notice that there is a unique decomposition, $r = \frac{r_1 + r_2}{2}$, such that $r_1$,$r_2 \in \mathbb{Z}$ and $r_1 - r_2  \in \{0,1\}$. Let $\alpha_1 = \sigma^{r_1} c_1$ and consider this curve in $A$.
    
    Define $B$ to be the filled-in square with corners labeled $\{ v_1, v_2, v_3, v_4 \}$, indexed clockwise, and edges labeled $\{ e_1, e_2, e_3, e_{4} \}$ such that $e_i$ has endpoints $v_i$ and $v_{i+1}$ for $i \mod 4$, and let $z$ be a point on the interior of $e_{4}$.
    \begin{center}
        \begin{tikzpicture}[scale=1.2]
            \draw[thick] (8, 1) to[out=330, in=210] (10,1);
            \draw[thick] (8,-1) -- (10,-1);
            \draw[thick] (8,1) -- (8,0);
            \draw[thick] (8,0) -- (8,-1);
            \draw[thick] (10,1) -- (10,0);
            \draw[thick] (10,0) -- (10,-1);
            \node[draw, circle, inner sep=0pt, minimum size=3pt, fill=white] at (8, 1) {};
            \node[draw, circle, inner sep=0pt, minimum size=3pt, fill=white] at (10, 1) {};
            \node[draw, circle, inner sep=0pt, minimum size=3pt, fill=white] at (8, -1) {};
            \node[draw, circle, inner sep=0pt, minimum size=3pt, fill=white] at (10, -1) {};
            \node[draw, circle, inner sep=0pt, minimum size=3pt, fill=black] at (9, -1) {};
            \node at (7.7, -1) {$v_1$};
            \node at (7.7, 1) {$v_2$};
            \node at (10.3, 1) {$v_3$};
            \node at (10.3, -1) {$v_4$};
            \node at (7.7, 0) {$e_1$};
            \node at (9, 0.9) {$e_2$};
            \node at (10.3, 0) {$e_3$};
            \node at (9, -1.3) {$e_4$};
            \node at (9, -0.8) {$z$};
        \end{tikzpicture}
    \end{center}
    Take $g_{\alpha_1} : B \to A$ to be the quotient map such that
    \begin{itemize}
        \item $g_{\alpha_1} (e_1) = g_{\alpha_1}(e_3) = \alpha_1$,
        \item $g_{\alpha_1}(v_2) = g_{\alpha_1}(v_3) = x$,
        \item $g_{\alpha_1}(v_1) = g_{\alpha_1}(v_4) = x_1$,
        \item $g_{\alpha_1}(z) = x_2$,
        \item and canonically identifies $e_4$ to $\partial_A$ and $e_2$ to $\partial \left(T^2 \setminus D^2\right)$.
    \end{itemize}
    Using a pullback of the induced metric on $T^2 \setminus D^2$, if $r_2 = r_1 - 1$ define $\beta_2$ to be the geodesic path in $B$ from $z$ to $v_{2}$ and to be the geodesic path from $z$ to $v_{3}$ if $r_2 = r_1$. Finally, let $\alpha_2 := g_{\alpha_1}(\beta_2)$. Then the curve $\alpha := \delta \cup \alpha_1 \cup \alpha_2$ is a simple tangle with endpoints on $x$. We now finally define $\widetilde{f}_{A}(p,q,r) = \alpha$ and $f_{A}(p,q,r) = [\alpha]$.
    
    The rest of this proof is primarily devoted to showing $f_{A}$ is surjective.
    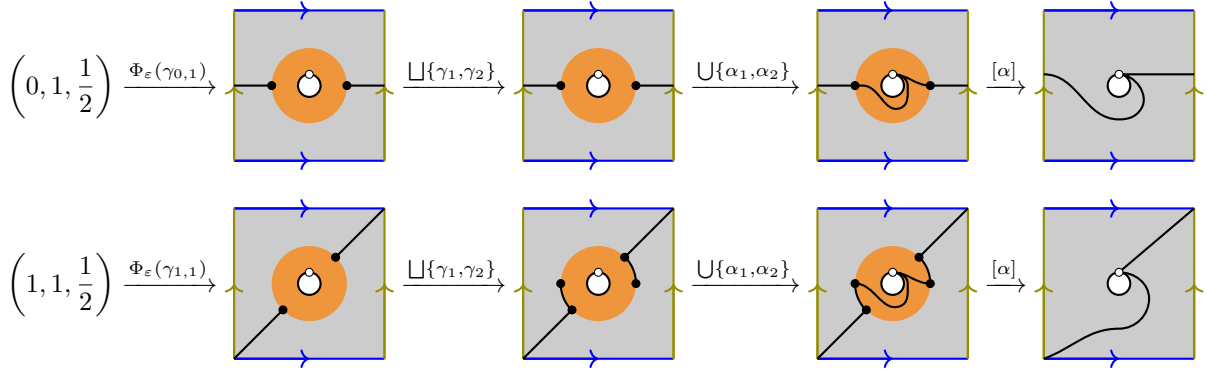
\begin{figure}[h]
        \centering
        $$\left(0,1,\frac{1}{2}\right) \xrightarrow{\Phi_{\varepsilon}(\gamma_{0,1})}
        \begin{tikzpicture}[scale=1, baseline=-3]
            \draw[draw=none, fill=gray!40] (0,1) -- (2,1) -- (2,-1) -- (0,-1) -- (1, -0.15) to[out=0, in=-90] (1.15, 0) to[out=90, in=0] (1, 0.15) to[out=180, in=90] (0.85, 0) to[out=-90, in=180] (1, -0.15) -- (0, -1) -- (0,1);
            \draw[thick, blue, ->] (0,1) -- (1,1);
            \draw[thick, blue] (0,1) -- (2,1);
            \draw[thick, blue, ->] (0,-1) -- (1,-1);
            \draw[thick, blue] (0,-1) -- (2,-1);
            \draw[thick, olive, ->] (0,-1) -- (0,0);
            \draw[thick, olive] (0,-1) -- (0,1);
            \draw[thick, olive, ->] (2,-1) -- (2,0);
            \draw[thick, olive] (2,-1) -- (2,1);
            \draw[draw=none, fill=orange, opacity=0.7] (1,0) circle[radius=0.5cm];
            \draw[thick] (0, 0) -- (0.5, 0);
            \draw[thick] (1.5, 0) -- (2, 0);
            \draw[thick, black, fill=white] (1, 0) circle (0.15);
            \node[draw, circle, inner sep=0pt, minimum size=3pt, fill=white] at (1, 0.15) {};
            \node[draw, circle, inner sep=0pt, minimum size=3pt, fill=black] at (1.5, 0) {};
            \node[draw, circle, inner sep=0pt, minimum size=3pt, fill=black] at (0.5, 0) {};
        \end{tikzpicture} \xrightarrow{\bigsqcup \{ \gamma_1, \gamma_2 \}}
        \begin{tikzpicture}[scale=1, baseline=-3]
            \draw[draw=none, fill=gray!40] (0,1) -- (2,1) -- (2,-1) -- (0,-1) -- (1, -0.15) to[out=0, in=-90] (1.15, 0) to[out=90, in=0] (1, 0.15) to[out=180, in=90] (0.85, 0) to[out=-90, in=180] (1, -0.15) -- (0, -1) -- (0,1);
            \draw[thick, blue, ->] (0,1) -- (1,1);
            \draw[thick, blue] (0,1) -- (2,1);
            \draw[thick, blue, ->] (0,-1) -- (1,-1);
            \draw[thick, blue] (0,-1) -- (2,-1);
            \draw[thick, olive, ->] (0,-1) -- (0,0);
            \draw[thick, olive] (0,-1) -- (0,1);
            \draw[thick, olive, ->] (2,-1) -- (2,0);
            \draw[thick, olive] (2,-1) -- (2,1);
            \draw[draw=none, fill=orange, opacity=0.7] (1,0) circle[radius=0.5cm];
            \draw[thick] (0, 0) -- (0.5, 0);
            \draw[thick] (1.5, 0) -- (2, 0);
            \draw[thick, black, fill=white] (1, 0) circle (0.15);
            \node[draw, circle, inner sep=0pt, minimum size=3pt, fill=white] at (1, 0.15) {};
            \node[draw, circle, inner sep=0pt, minimum size=3pt, fill=black] at (1.5, 0) {};
            \node[draw, circle, inner sep=0pt, minimum size=3pt, fill=black] at (0.5, 0) {};
        \end{tikzpicture} \xrightarrow{\bigcup \{ \alpha_1, \alpha_2 \}}
        \begin{tikzpicture}[scale=1, baseline=-3]
            \draw[draw=none, fill=gray!40] (0,1) -- (2,1) -- (2,-1) -- (0,-1) -- (1, -0.15) to[out=0, in=-90] (1.15, 0) to[out=90, in=0] (1, 0.15) to[out=180, in=90] (0.85, 0) to[out=-90, in=180] (1, -0.15) -- (0, -1) -- (0,1);
            \draw[thick, blue, ->] (0,1) -- (1,1);
            \draw[thick, blue] (0,1) -- (2,1);
            \draw[thick, blue, ->] (0,-1) -- (1,-1);
            \draw[thick, blue] (0,-1) -- (2,-1);
            \draw[thick, olive, ->] (0,-1) -- (0,0);
            \draw[thick, olive] (0,-1) -- (0,1);
            \draw[thick, olive, ->] (2,-1) -- (2,0);
            \draw[thick, olive] (2,-1) -- (2,1);
            \draw[draw=none, fill=orange, opacity=0.7] (1,0) circle[radius=0.5cm];
            \draw[thick] (0, 0) -- (0.6, 0) to[out=0, in=200] (1.1, -0.3) to[out=20, in=270] (1.2, 0) to[out=90, in=0] (1, 0.15) to[out=0, in=180] (1.5, 0) -- (2, 0);
            \draw[thick, black, fill=white] (1, 0) circle (0.15);
            \node[draw, circle, inner sep=0pt, minimum size=3pt, fill=white] at (1, 0.15) {};
            \node[draw, circle, inner sep=0pt, minimum size=3pt, fill=black] at (1.5, 0) {};
            \node[draw, circle, inner sep=0pt, minimum size=3pt, fill=black] at (0.5, 0) {};
        \end{tikzpicture} \xrightarrow{[\alpha]}
        \begin{tikzpicture}[scale=1, baseline=-3]
            \draw[draw=none, fill=gray!40] (0,1) -- (2,1) -- (2,-1) -- (0,-1) -- (1, -0.15) to[out=0, in=-90] (1.15, 0) to[out=90, in=0] (1, 0.15) to[out=180, in=90] (0.85, 0) to[out=-90, in=180] (1, -0.15) -- (0, -1) -- (0,1);
            \draw[thick, blue, ->] (0,1) -- (1,1);
            \draw[thick, blue] (0,1) -- (2,1);
            \draw[thick, blue, ->] (0,-1) -- (1,-1);
            \draw[thick, blue] (0,-1) -- (2,-1);
            \draw[thick, olive, ->] (0,-1) -- (0,0);
            \draw[thick, olive] (0,-1) -- (0,1);
            \draw[thick, olive, ->] (2,-1) -- (2,0);
            \draw[thick, olive] (2,-1) -- (2,1);
            \draw[thick] (2, 0.15) -- (1, 0.15) to[out=0, in=60] (1.3, -0.3) to[out=240, in=0] (1,-0.45) to[out=180, in=0] (0, 0.15);
            \draw[thick, black, fill=white] (1, 0) circle (0.15);
            \node[draw, circle, inner sep=0pt, minimum size=3pt, fill=white] at (1, 0.15) {};
        \end{tikzpicture}$$
        $$\left(1,1,\frac{1}{2}\right) \xrightarrow{\Phi_{\varepsilon}(\gamma_{1,1})}
        \begin{tikzpicture}[scale=1, baseline=-3]
            \draw[draw=none, fill=gray!40] (0,1) -- (2,1) -- (2,-1) -- (0,-1) -- (1, -0.15) to[out=0, in=-90] (1.15, 0) to[out=90, in=0] (1, 0.15) to[out=180, in=90] (0.85, 0) to[out=-90, in=180] (1, -0.15) -- (0, -1) -- (0,1);
            \draw[thick, blue, ->] (0,1) -- (1,1);
            \draw[thick, blue] (0,1) -- (2,1);
            \draw[thick, blue, ->] (0,-1) -- (1,-1);
            \draw[thick, blue] (0,-1) -- (2,-1);
            \draw[thick, olive, ->] (0,-1) -- (0,0);
            \draw[thick, olive] (0,-1) -- (0,1);
            \draw[thick, olive, ->] (2,-1) -- (2,0);
            \draw[thick, olive] (2,-1) -- (2,1);
            \draw[draw=none, fill=orange, opacity=0.7] (1,0) circle[radius=0.5cm];
            \draw[thick] (0, -1) -- (0.65, -0.35);
            \draw[thick] (1.35, 0.35) -- (2, 1);
            \draw[thick, black, fill=white] (1, 0) circle (0.15);
            \node[draw, circle, inner sep=0pt, minimum size=3pt, fill=white] at (1, 0.15) {};
            \node[draw, circle, inner sep=0pt, minimum size=3pt, fill=black] at (1.35, 0.35) {};
            \node[draw, circle, inner sep=0pt, minimum size=3pt, fill=black] at (0.65, -0.35) {};
        \end{tikzpicture} \xrightarrow{\bigsqcup \{ \gamma_1, \gamma_2 \}}
        \begin{tikzpicture}[scale=1, baseline=-3]
            \draw[draw=none, fill=gray!40] (0,1) -- (2,1) -- (2,-1) -- (0,-1) -- (1, -0.15) to[out=0, in=-90] (1.15, 0) to[out=90, in=0] (1, 0.15) to[out=180, in=90] (0.85, 0) to[out=-90, in=180] (1, -0.15) -- (0, -1) -- (0,1);
            \draw[thick, blue, ->] (0,1) -- (1,1);
            \draw[thick, blue] (0,1) -- (2,1);
            \draw[thick, blue, ->] (0,-1) -- (1,-1);
            \draw[thick, blue] (0,-1) -- (2,-1);
            \draw[thick, olive, ->] (0,-1) -- (0,0);
            \draw[thick, olive] (0,-1) -- (0,1);
            \draw[thick, olive, ->] (2,-1) -- (2,0);
            \draw[thick, olive] (2,-1) -- (2,1);
            \draw[draw=none, fill=orange, opacity=0.7] (1,0) circle[radius=0.5cm];
            \draw[thick] (0, -1) -- (0.65, -0.35);
            \draw[thick, domain=180:225] plot ({0.5*cos(\x)+1}, {0.5*sin(\x)});
            \draw[thick, domain=0:45] plot ({0.5*cos(\x)+1}, {0.5*sin(\x)});
            \draw[thick] (1.35, 0.35) -- (2, 1);
            \draw[thick, black, fill=white] (1, 0) circle (0.15);
            \node[draw, circle, inner sep=0pt, minimum size=3pt, fill=white] at (1, 0.15) {};
            \node[draw, circle, inner sep=0pt, minimum size=3pt, fill=black] at (1.35, 0.35) {};
            \node[draw, circle, inner sep=0pt, minimum size=3pt, fill=black] at (0.65, -0.35) {};
            \node[draw, circle, inner sep=0pt, minimum size=3pt, fill=black] at (1.5, 0) {};
            \node[draw, circle, inner sep=0pt, minimum size=3pt, fill=black] at (0.5, 0) {};
        \end{tikzpicture} \xrightarrow{\bigcup \{ \alpha_1, \alpha_2 \}}
        \begin{tikzpicture}[scale=1, baseline=-3]
            \draw[draw=none, fill=gray!40] (0,1) -- (2,1) -- (2,-1) -- (0,-1) -- (1, -0.15) to[out=0, in=-90] (1.15, 0) to[out=90, in=0] (1, 0.15) to[out=180, in=90] (0.85, 0) to[out=-90, in=180] (1, -0.15) -- (0, -1) -- (0,1);
            \draw[thick, blue, ->] (0,1) -- (1,1);
            \draw[thick, blue] (0,1) -- (2,1);
            \draw[thick, blue, ->] (0,-1) -- (1,-1);
            \draw[thick, blue] (0,-1) -- (2,-1);
            \draw[thick, olive, ->] (0,-1) -- (0,0);
            \draw[thick, olive] (0,-1) -- (0,1);
            \draw[thick, olive, ->] (2,-1) -- (2,0);
            \draw[thick, olive] (2,-1) -- (2,1);
            \draw[draw=none, fill=orange, opacity=0.7] (1,0) circle[radius=0.5cm];
            \draw[thick] (0, -1) -- (0.65, -0.35);
            \draw[thick, domain=180:225] plot ({0.5*cos(\x)+1}, {0.5*sin(\x)});
            \draw[thick, domain=0:45] plot ({0.5*cos(\x)+1}, {0.5*sin(\x)});
            \draw[thick] (1.35, 0.35) -- (2, 1);
            \draw[thick] (0.5, 0) to[out=0, in=200] (1.1, -0.3) to[out=20, in=270] (1.2, 0) to[out=90, in=0] (1, 0.15) to[out=0, in=180] (1.5, 0);
            \draw[thick, black, fill=white] (1, 0) circle (0.15);
            \node[draw, circle, inner sep=0pt, minimum size=3pt, fill=white] at (1, 0.15) {};
            \node[draw, circle, inner sep=0pt, minimum size=3pt, fill=black] at (1.35, 0.35) {};
            \node[draw, circle, inner sep=0pt, minimum size=3pt, fill=black] at (0.65, -0.35) {};
            \node[draw, circle, inner sep=0pt, minimum size=3pt, fill=black] at (1.5, 0) {};
            \node[draw, circle, inner sep=0pt, minimum size=3pt, fill=black] at (0.5, 0) {};
        \end{tikzpicture} \xrightarrow{[\alpha]}
        \begin{tikzpicture}[scale=1, baseline=-3]
            \draw[draw=none, fill=gray!40] (0,1) -- (2,1) -- (2,-1) -- (0,-1) -- (1, -0.15) to[out=0, in=-90] (1.15, 0) to[out=90, in=0] (1, 0.15) to[out=180, in=90] (0.85, 0) to[out=-90, in=180] (1, -0.15) -- (0, -1) -- (0,1);
            \draw[thick, blue, ->] (0,1) -- (1,1);
            \draw[thick, blue] (0,1) -- (2,1);
            \draw[thick, blue, ->] (0,-1) -- (1,-1);
            \draw[thick, blue] (0,-1) -- (2,-1);
            \draw[thick, olive, ->] (0,-1) -- (0,0);
            \draw[thick, olive] (0,-1) -- (0,1);
            \draw[thick, olive, ->] (2,-1) -- (2,0);
            \draw[thick, olive] (2,-1) -- (2,1);
            \draw[thick] (0, -1) to[out=20, in=180] (1, -0.6) to[out=0, in=270] (1.4, -0.2) to[out=90, in=0] (1, 0.15) -- (2, 1);
            \draw[thick, black, fill=white] (1, 0) circle (0.15);
            \node[draw, circle, inner sep=0pt, minimum size=3pt, fill=white] at (1, 0.15) {};
        \end{tikzpicture}$$
        \caption[Examples of $f_{A}(p,q,r)$]{Example of $f_{A}\left(0,1,\frac{1}{2}\right)$ and $f_{A}\left(1,1,\frac{1}{2}\right)$}
        \label{fig:fMapExample}
    \end{figure}
    
    Define $A$, $\partial_A$, $x_0$, $x_1$, $x_2$, and $c_1$ as before. Let $\eta$ be a smooth radial vector field on a neighborhood of $A$, denoted $N(A)$, such that
    \begin{itemize}
        \item For all $y \in N(A) \setminus A$, $\eta(y)$ is tangent to the geodesic within $N(A)$ from $y$ to $\partial_A$
        \item $\eta$ is zero outside of $N(A)$,
        \item and for all $y \in \partial_A$, the vector $\eta(y)$ is normal to $\partial_A$ and points inwards.
    \end{itemize}
    
    Define
    $$\Psi: \left(T^2 \setminus D^2 \right) \setminus A \xrightarrow{\quad\sim\quad} T^2 \setminus \{\mathfrak{p}\} \xhookrightarrow{\phantom{phntm}} T^2$$ to be the homeomorphism that flows along $\eta$, and therefore restricts to the identity on $T^2 \setminus N(A)$, followed by its canonical embedding into $T^2$.
    \begin{center}
        \begin{tikzpicture}[scale=1.3]
            \draw[draw=none, fill=gray!40] (-1,1) -- (1,1) -- (1,-1) -- (-1,-1);
            \draw[thick, blue, ->] (-1,1) -- (0,1);
            \draw[thick, blue] (-1,1) -- (1,1);
            \draw[thick, blue, ->] (-1,-1) -- (0,-1);
            \draw[thick, blue] (-1,-1) -- (1,-1);
            \draw[thick, olive, ->] (-1,-1) -- (-1,0);
            \draw[thick, olive] (-1,-1) -- (-1,1);
            \draw[thick, olive, ->] (1,-1) -- (1,0);
            \draw[thick, olive] (1,-1) -- (1,1);
            \draw[draw=red] (0, 0) circle[radius=15pt];
            \node at (0.53, 0.5) {\footnotesize{$\partial_A$}};
            \draw[thick, black, fill=white] (0, 0) circle (0.15);
            \node[draw, circle, inner sep=0pt, minimum size=3pt, fill=white] at (0, 0.15) {};
            \draw[->] (1.5, 0) -- (2.5, 0);
            \node at (1.95, 0.15) {$\sim$};
            \draw[draw=none, fill=gray!40] (3,1) -- (5,1) -- (5,-1) -- (3,-1) -- (3,1);
            \draw[thick, blue, ->] (3,1) -- (4,1);
            \draw[thick, blue] (3,1) -- (5,1);
            \draw[thick, blue, ->] (3,-1) -- (4,-1);
            \draw[thick, blue] (3,-1) -- (5,-1);
            \draw[thick, olive, ->] (3,-1) -- (3,0);
            \draw[thick, olive] (3,-1) -- (3,1);
            \draw[thick, olive, ->] (5,-1) -- (5,0);
            \draw[thick, olive] (5,-1) -- (5,1);
            \node[draw, circle, red, inner sep=0pt, minimum size=3pt, fill=white] at (4, 0) {};
            \node at (3.8, -0.1) {$\mathfrak{p}$};
            \draw[->] (5.5, 0) -- (6.5, 0);
            \draw[draw=none, fill=gray!40] (7,1) -- (9,1) -- (9,-1) -- (7,-1);
            \draw[thick, blue, ->] (7,1) -- (8,1);
            \draw[thick, blue] (7,1) -- (9,1);
            \draw[thick, blue, ->] (7,-1) -- (8,-1);
            \draw[thick, blue] (7,-1) -- (9,-1);
            \draw[thick, olive, ->] (7,-1) -- (7,0);
            \draw[thick, olive] (7,-1) -- (7,1);
            \draw[thick, olive, ->] (9,-1) -- (9,0);
            \draw[thick, olive] (9,-1) -- (9,1);
        \end{tikzpicture}
    \end{center}
    
    Consider any curve in $\Omega_x$ and select a simple representative, $\alpha$, such that $\left|\alpha \cap \partial_A \right| = 2$ and $\Psi\left(\alpha \setminus ( A \cap \alpha )\right)$ has constant slope. Since we required $\alpha$ to be simple, if $\alpha$ lies entirely within $A$, it must either be parallel to the boundary of $T^2 \setminus D^2$ or null-homotopic, and therefore not in $\Omega_x$. As all simple unoriented closed curves on the torus are classified as $(p,q)$-curves, where $(p,q) \sim (-p,-q)$ and $p$ and $q$ are coprime, we can uniquely associate a $(p,q)$ pair to the closed curve $\Psi\left(\alpha \setminus ( A \cap \alpha )\right) \cup \{\mathfrak{p}\}$.
    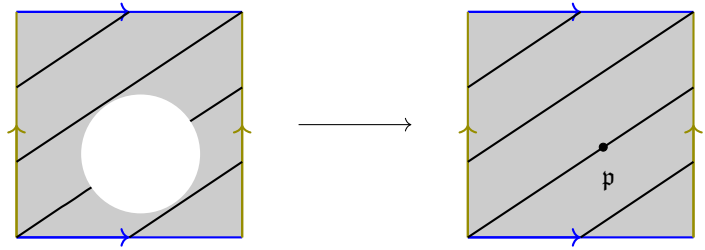
\begin{figure}[h]
    \begin{center}
        \begin{tikzpicture}[scale=1.5]
            \draw[draw=none, fill=gray!40] (3,1) -- (5,1) -- (5,-1) -- (3,-1) -- (4, -0.15) to[out=0, in=-90] (4.15, 0) to[out=90, in=0] (4, 0.15) to[out=180, in=90] (3.85, 0) to[out=-90, in=180] (4, -0.15) -- (3, -1) -- (3,1);
            \draw[thick, blue, ->] (3,1) -- (4,1);
            \draw[thick, blue] (3,1) -- (5,1);
            \draw[thick, blue, ->] (3,-1) -- (4,-1);
            \draw[thick, blue] (3,-1) -- (5,-1);
            \draw[thick, olive, ->] (3,-1) -- (3,0);
            \draw[thick, olive] (3,-1) -- (3,1);
            \draw[thick, olive, ->] (5,-1) -- (5,0);
            \draw[thick, olive] (5,-1) -- (5,1);
            \draw[thick] (3, -1) -- (3.4, -0.73) to[out=30, in=180] (4,-0.45) to[out=0, in=240] (4.3, -0.3) to[out=60, in=0] (4.1, 0.15);
            \draw[thick] (4, 0.15) to[out=90, in=180] (4.54, 0.03) -- (5, 0.33);
            \draw[thick] (3, 0.33) -- (4, 1);
            \draw[thick] (4, -1) -- (5, -0.33);
            \draw[thick] (3, -0.33) -- (5, 1);
            \draw[thick, black] (4, 0) circle (0.15);
            \node[draw, circle, inner sep=0pt, minimum size=3pt, fill=green] at (4, 0.15) {};
            \draw[draw=none, fill=white] (4.1,-0.26) circle[radius=15pt];
            \draw[->] (5.5, 0) -- (6.5, 0);
            \draw[draw=none, fill=gray!40] (7,1) -- (9,1) -- (9,-1) -- (7,-1);
            \draw[thick, blue, ->] (7,1) -- (8,1);
            \draw[thick, blue] (7,1) -- (9,1);
            \draw[thick, blue, ->] (7,-1) -- (8,-1);
            \draw[thick, blue] (7,-1) -- (9,-1);
            \draw[thick, olive, ->] (7,-1) -- (7,0);
            \draw[thick, olive] (7,-1) -- (7,1);
            \draw[thick, olive, ->] (9,-1) -- (9,0);
            \draw[thick, olive] (9,-1) -- (9,1);
            \draw[thick] (7, -1) -- (9, 0.33);
            \draw[thick] (7, 0.33) -- (8, 1);
            \draw[thick] (8, -1) -- (9, -0.33);
            \draw[thick] (7, -0.33) -- (9, 1);
            \node[draw, circle, inner sep=0pt, minimum size=3pt, fill=black] at (8.2, -0.2) {};
            \node at (8.25, -0.5) {$\mathfrak{p}$};
        \end{tikzpicture}\caption[Turning a tangle into a closed curve on the torus]{Turning the tangle, $\alpha$, in the complement of $A$ into a closed curve on the torus via $\Psi$. This example becomes a curve in $T^2$ with classification $(2,3)$}\label{fig:PsiClosure}
    \end{center}
    \end{figure}

    Identify $\alpha$ with a map, $\alpha : [0,1] \to T^2 \setminus D^2$, such that $\alpha(0) = \alpha(1) = x$. Label the points $\{y_1, y_2\} = \alpha \cap \partial_A$ and their preimages as $t_i^\prime := \alpha^{-1}(y_i)$, assuming $t_1^\prime < t_2^\prime$. If $(p,q)$ is a positive slope, then let $\gamma_i$ be the geodesics on $\partial_A$ from $y_i$ to $x_i$, rotating clockwise. If these geodesics intersect each other then we can swap the labels $y_i$ by changing the orientation of $\alpha$ to avoid this. If $(p,q)$ is a negative slope then let $\gamma_i$ be the geodesics on $\partial_A$ from $y_i$ to $x_i$, rotating counterclockwise, once again changing the orientation of $\alpha$ if needed.
    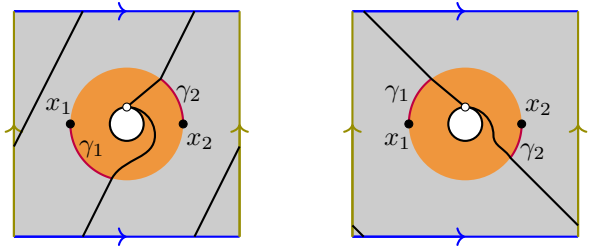
\begin{figure}
    \begin{center}
        \begin{tikzpicture}[scale=1.5]
            \draw[draw=none, fill=gray!40] (3,1) -- (5,1) -- (5,-1) -- (3,-1) -- (4, -0.15) to[out=0, in=-90] (4.15, 0) to[out=90, in=0] (4, 0.15) to[out=180, in=90] (3.85, 0) to[out=-90, in=180] (4, -0.15) -- (3, -1) -- (3,1);
            \draw[thick, blue, ->] (3,1) -- (4,1);
            \draw[thick, blue] (3,1) -- (5,1);
            \draw[thick, blue, ->] (3,-1) -- (4,-1);
            \draw[thick, blue] (3,-1) -- (5,-1);
            \draw[thick, olive, ->] (3,-1) -- (3,0);
            \draw[thick, olive] (3,-1) -- (3,1);
            \draw[thick, olive, ->] (5,-1) -- (5,0);
            \draw[thick, olive] (5,-1) -- (5,1);
            \draw[draw=none, fill=orange, opacity=0.7] (4,0) circle[radius=0.5cm];
            \draw[thick, purple, domain=180:255] plot ({0.5*cos(\x)+4}, {0.5*sin(\x)});
            \draw[thick, purple, domain=0:53] plot ({0.5*cos(\x)+4}, {0.5*sin(\x)});
            \draw[thick] (3.61, -1) -- (3.87, -0.48) to[out=60, in=270] (4.25, -0.1) to[out=90, in=0] (4, 0.15) -- (4.3, 0.4) -- (4.6, 1);
            \draw[thick] (4.6, -1) -- (5, -0.2);
            \draw[thick] (3, -0.2) -- (3.61, 1);
            \draw[thick, black, fill=white] (4, 0) circle (0.15);
            \node[draw, circle, inner sep=0pt, minimum size=3pt, fill=white] at (4, 0.15) {};
            \node[draw, circle, inner sep=0pt, minimum size=3pt, fill=black] at (4.5, 0) {};
            \node[draw, circle, inner sep=0pt, minimum size=3pt, fill=black] at (3.5, 0) {};
            \node at (3.4, 0.16) {$x_1$};
            \node at (4.65, -0.15) {$x_2$};
            \node at (3.69, -0.22) {$\gamma_1$};
            \node at (4.55, 0.27) {$\gamma_2$};
            \draw[draw=none, fill=gray!40] (6,1) -- (8,1) -- (8,-1) -- (6,-1) -- (7, -0.15) to[out=0, in=-90] (7.15, 0) to[out=90, in=0] (7, 0.15) to[out=180, in=90] (6.85, 0) to[out=-90, in=180] (7, -0.15) -- (6, -1) -- (6, 1);
            \draw[thick, blue, ->] (6,1) -- (7,1);
            \draw[thick, blue] (6,1) -- (8,1);
            \draw[thick, blue, ->] (6,-1) -- (7,-1);
            \draw[thick, blue] (6,-1) -- (8,-1);
            \draw[thick, olive, ->] (6,-1) -- (6,0);
            \draw[thick, olive] (6,-1) -- (6,1);
            \draw[thick, olive, ->] (8,-1) -- (8,0);
            \draw[thick, olive] (8,-1) -- (8,1);
            \draw[draw=none, fill=orange, opacity=0.7] (7,0) circle[radius=0.5cm];
            \draw[thick, purple, domain=127:180] plot ({0.5*cos(\x)+7}, {0.5*sin(\x)});
            \draw[thick, purple, domain=-37:0] plot ({0.5*cos(\x)+7}, {0.5*sin(\x)});
            \draw[thick] (6.1, 1) -- (6.7, 0.4) -- (7, 0.15) to[out=0, in=90] (7.25, -0.1) to[out=270, in=120] (7.4, -0.3) -- (8, -0.9);
            \draw[thick] (6, -0.9) -- (6.1, -1);
            \draw[thick, black, fill=white] (7, 0) circle (0.15);
            \node[draw, circle, inner sep=0pt, minimum size=3pt, fill=white] at (7, 0.15) {};
            \node[draw, circle, inner sep=0pt, minimum size=3pt, fill=black] at (7.5, 0) {};
            \node[draw, circle, inner sep=0pt, minimum size=3pt, fill=black] at (6.5, 0) {};
            \node at (6.4, -0.15) {$x_1$};
            \node at (7.65, 0.15) {$x_2$};
            \node at (6.4, 0.27) {$\gamma_1$};
            \node at (7.6, -0.25) {$\gamma_2$};
        \end{tikzpicture}
        \caption[Constructing geodesic paths, $\gamma_i$]{Left: Example of a curve with a positive slope. Right: Example with a negative slope.}
    \end{center}
    \end{figure}
    If $(p,q) = (1,0)$, the same slope as the longitude, then let $\gamma_i$ be the geodesics on $\partial_A$ from $y_i$ to $x_i$, rotating clockwise. Finally, if $(p,q) = (0,1)$ then $y_i = x_i$ and $\gamma_i$ is trivial for each $i$.
    
    Isotope $\alpha$ inside $A$ so that for some $0 < t_1 < t_1^\prime$ and $1 > t_2 > t_2^\prime$, $\alpha$ satisfies:
    \begin{itemize}
        \item $\alpha(t_i) = x_i$,
        \item $\alpha([t_1, t_1^\prime]) = \gamma_1$,
        \item $\alpha([t_2^\prime, t_2]) = \gamma_2$,
        \item $\alpha([t_1^\prime, t_2^\prime]\bigsqcup \{0,1\})$ remains unchanged,
        \item $\alpha$ is transverse with itself at $\alpha(0)$ and $\alpha(1)$.
    \end{itemize}
    \begin{center}
        \begin{tikzpicture}[scale=1.5]
            \draw[draw=none, fill=gray!40] (4,1) -- (6,1) -- (6,-1) -- (4,-1) -- (5, -0.15) to[out=0, in=-90] (5.15, 0) to[out=90, in=0] (5, 0.15) to[out=180, in=90] (4.85, 0) to[out=-90, in=180] (5, -0.15) -- (4, -1) -- (4,1);
            \draw[thick, blue, ->] (4,1) -- (5,1);
            \draw[thick, blue] (4,1) -- (6,1);
            \draw[thick, blue, ->] (4,-1) -- (5,-1);
            \draw[thick, blue] (4,-1) -- (6,-1);
            \draw[thick, olive, ->] (4,-1) -- (4,0);
            \draw[thick, olive] (4,-1) -- (4,1);
            \draw[thick, olive, ->] (6,-1) -- (6,0);
            \draw[thick, olive] (6,-1) -- (6,1);
            \draw[draw=none, fill=orange, opacity=0.7] (5.1,-0.26) circle[radius=0.52cm];
            \draw[thick] (4, -1) -- (4.2, -0.865) -- (4.4, -0.73) to[out=30, in=180] (5,-0.45) to[out=0, in=240] (5.3, -0.3) to[out=60, in=0] (5, 0.15);
            \draw[thick] (5, 0.15) to[out=30, in=180] (5.54, 0.03) -- (6, 0.33);
            \draw[thick] (4, 0.33) -- (5, 1);
            \draw[thick] (5, -1) -- (6, -0.33);
            \draw[thick] (4, -0.33) -- (6, 1);
            \path[thick, tips, ->] (5.45,0) arc (0:-90:0.45);
            \draw[thick, black, fill=white] (5, 0) circle (0.15);
            \node[draw, circle, inner sep=0pt, minimum size=3pt, fill=white] at (5, 0.15) {};
            \node[draw, circle, inner sep=0pt, minimum size=3pt, fill=black] at (4.6, -0.26) {};
            \node[draw, circle, inner sep=0pt, minimum size=3pt, fill=black] at (5.6, -0.26) {};
            \node at (4.7, -0.75) {$y_1$};
            \node at (5.55, 0.23) {$y_2$};
            \draw[->] (6.5, 0) -- (7.5, 0);
            \draw[draw=none, fill=gray!40] (8,1) -- (10,1) -- (10,-1) -- (8,-1) -- (9, -0.15) to[out=0, in=-90] (9.15, 0) to[out=90, in=0] (9, 0.15) to[out=180, in=90] (8.85, 0) to[out=-90, in=180] (9, -0.15) -- (8, -1) -- (8,1);
            \draw[thick, blue, ->] (8,1) -- (9,1);
            \draw[thick, blue] (8,1) -- (10,1);
            \draw[thick, blue, ->] (8,-1) -- (9,-1);
            \draw[thick, blue] (8,-1) -- (10,-1);
            \draw[thick, olive, ->] (8,-1) -- (8,0);
            \draw[thick, olive] (8,-1) -- (8,1);
            \draw[thick, olive, ->] (10,-1) -- (10,0);
            \draw[thick, olive] (10,-1) -- (10,1);
            \draw[draw=none, fill=orange, opacity=0.7] (9.1,-0.26) circle[radius=0.53cm];
            \draw[thick] (8, -1) -- (8.66, -0.55) to[out=135, in=250] (8.6, -0.26) to[out=0, in=180] (9,-0.45) to[out=0, in=240] (9.3, -0.3) to[out=60, in=0] (9, 0.15);
            \draw[thick] (9, 0.15) to[out=30, in=130] (9.6, -0.26) to[out=70, in=-45] (9.54, 0.03) -- (10, 0.33);
            \draw[thick] (8, 0.33) -- (9, 1);
            \draw[thick] (9, -1) -- (10, -0.33);
            \draw[thick] (8, -0.33) -- (10, 1);
            \path[thick, tips, ->] (9.45,0) arc (0:-90:0.45);
            \draw[thick, black, fill=white] (9, 0) circle (0.15);
            \node[draw, circle, inner sep=0pt, minimum size=3pt, fill=white] at (9, 0.15) {};
            \node[draw, circle, inner sep=0pt, minimum size=3pt, fill=black] at (8.6, -0.26) {};
            \node[draw, circle, inner sep=0pt, minimum size=3pt, fill=black] at (9.6, -0.26) {};
            \node at (8.7, -0.75) {$y_1$};
            \node at (9.55, 0.23) {$y_2$};
        \end{tikzpicture}
    \end{center}
    We can now focus on the region within the annulus, $A$, and just $\alpha([0,t_1] \sqcup [t_2, 1])$.
    \begin{center}
        \begin{tikzpicture}[scale=1.5]
            \draw[draw=none, fill=gray!40] (0,1) -- (2,1) -- (2,-1) -- (0,-1) -- (1, -0.15) to[out=0, in=250] (1.15, 0) to[out=90, in=0] (1, 0.15) to[out=180, in=90] (0.85, 0) to[out=-90, in=180] (1, -0.15) -- (0, -1) -- (0,1);
            \draw[thick, blue, ->] (0,1) -- (1,1);
            \draw[thick, blue] (0,1) -- (2,1);
            \draw[thick, blue, ->] (0,-1) -- (1,-1);
            \draw[thick, blue] (0,-1) -- (2,-1);
            \draw[thick, olive, ->] (0,-1) -- (0,0);
            \draw[thick, olive] (0,-1) -- (0,1);
            \draw[thick, olive, ->] (2,-1) -- (2,0);
            \draw[thick, olive] (2,-1) -- (2,1);
            \draw[draw=none, fill=orange, opacity=0.7] (1.1,-0.26) circle[radius=0.53cm];
            \draw[thick] (0, -1) -- (0.66, -0.55) to[out=135, in=250] (0.6, -0.26) to[out=0, in=180] (1,-0.45) to[out=0, in=240] (1.3, -0.3) to[out=60, in=0] (1, 0.15);
            \draw[thick] (1, 0.15) to[out=30, in=130] (1.6, -0.26) to[out=70, in=-45] (1.54, 0.03) -- (2, 0.33);
            \draw[thick] (0, 0.33) -- (1, 1);
            \draw[thick] (1, -1) -- (2, -0.33);
            \draw[thick] (0, -0.33) -- (2, 1);
            \path[thick, tips, ->] (1.45,0) arc (0:-90:0.45);
            \draw[thick, black, fill=white] (1, 0) circle (0.15);
            \node[draw, circle, inner sep=0pt, minimum size=3pt, fill=white] at (1, 0.15) {};
            \node[draw, circle, inner sep=0pt, minimum size=3pt, fill=black] at (0.6, -0.26) {};
            \node[draw, circle, inner sep=0pt, minimum size=3pt, fill=black] at (1.6, -0.26) {};
            \draw[fill=gray!40] (5, 0) circle (1);
            \draw[fill=white] (5, 0) circle (0.2);
            \draw[thick] (4, 0) to[out=0, in=180] (5, -0.45) to[out=0, in=240] (5.3, -0.3) to[out=60, in=20] (5, 0.2);
            \draw[thick] (5, 0.2) to[out=45, in=180] (6, 0);
            \path[thick, tips, ->] (5.45,0) arc (0:-90:0.45);
            \path[thick, tips, ->] (6, 0) -- (5.5, 0.18);
            \draw[dashed, blue] (4, 0) -- (5, 0.2);
            \node[draw, circle, inner sep=0pt, minimum size=3pt, fill=black] at (4, 0) {};
            \node[draw, circle, inner sep=0pt, minimum size=3pt, fill=black] at (6, 0) {};
            \node[draw, circle, inner sep=0pt, minimum size=3pt, fill=white] at (5, 0.2) {};
            \node at (4.2, 0.16) {$x_1$};
            \node at (5.8, -0.17) {$x_2$};
            \node[blue] at (4.7, 0.3) {$c_1$};
            \draw[->] (2.3, 0) -- (3.7, 0);
            \node at (3, 0.2) {$A$};
        \end{tikzpicture}
    \end{center}

    Consider the corresponding restriction, $\hat{\alpha}: [0,t_1] \cup [t_2, 1] \to A$, of $\alpha$ and further restrict this into its two components, $\hat{\alpha}_1: [0,t_1] \to A$, and $\hat{\alpha}_2: [t_2,1] \to A$. Since the mapping class group of the annulus is isomorphic to $\mathbb{Z}$, $\hat{\alpha}_1$ must be isotopic to $\sigma^{r_{1,\alpha}} c_1$ for some $r_{1,\alpha} \in \mathbb{Z}$.
    
    If we cut $A$ along $\hat{\alpha}_1$, the resulting picture is a square. Since $\hat{\alpha}_2$ is a path from $\hat{\alpha}(t_2)$ to $x$ and since $\hat{\alpha}(t_2)$ lies on $\partial_A$ and is distinct from $\hat{\alpha}(t_1)$, there are exactly two possible ways $\hat{\alpha}$ can be simple up to isotopy.
    \begin{center}
        \begin{tikzpicture}[scale=1.3]
            \draw (3,0) circle (1);
            \draw[thick, orange] (2.01, 0.15) to[out=0, in=135] (3, 0.15);
            \draw[thick, cyan] (3, 0.15) to[out=180, in=120] (2.7, -0.3) to[out=300, in=180] (3,-0.45) to[out=0, in=210] (3.99, 0.15);
            \draw[thick, blue] (3, 0.15) -- (3.99, 0.15);
            \draw[thick, black] (3, 0) circle (0.15);
            \node[draw, circle, inner sep=0pt, minimum size=3pt, fill=white] at (3, 0.15) {};
            \node[draw, circle, inner sep=0pt, minimum size=3pt, fill=green] at (2.01, 0.15) {};
            \node[draw, circle, inner sep=0pt, minimum size=3pt, fill=red] at (3.99, 0.15) {};
            \path[thick, tips, orange, ->] (3, 0.35) -- (2.5, 0.24);
            \path[thick, tips, blue, ->] (4, 0.15) -- (3.5, 0.15);
            \path[thick, tips, cyan, ->] (3.45,0) arc (0:-90:0.45);
            \node at (1.5, 0.37) {$\hat{\alpha}_1(t_1)$};
            \node at (3, 0.35) {$x$};
            \node at (4.5, 0.37) {$\hat{\alpha}_2(t_2)$};
            \draw[->] (5.3, 0) -- (6.8, 0);
            \node at (6, 0.2) {cut};
            \draw[thick] (8, 1) to[out=330, in=210] (10,1);
            \draw[thick] (8,-1) -- (10,-1);
            \draw[thick, orange, ->] (8,1) -- (8,0);
            \draw[thick, orange] (8,0) -- (8,-1);
            \draw[thick, orange, ->] (10,1) -- (10,0);
            \draw[thick, orange] (10,0) -- (10,-1);
            \draw[thick, cyan, ->] (9, -1) -- (8.5, 0);
            \draw[thick, cyan] (8.5, 0) -- (8, 1);
            \draw[thick, blue, ->] (9, -1) -- (9.5, 0);
            \draw[thick, blue] (9.5, 0) -- (10, 1);
            \node[draw, circle, inner sep=0pt, minimum size=3pt, fill=white] at (8, 1) {};
            \node[draw, circle, inner sep=0pt, minimum size=3pt, fill=white] at (10, 1) {};
            \node[draw, circle, inner sep=0pt, minimum size=3pt, fill=green] at (8, -1) {};
            \node[draw, circle, inner sep=0pt, minimum size=3pt, fill=green] at (10, -1) {};
            \node[draw, circle, inner sep=0pt, minimum size=3pt, fill=red] at (9, -1) {};
            \node at (7.4, -1) {$\hat{\alpha}_1(t_1)$};
            \node at (10.6, -1) {$\hat{\alpha}_1(t_1)$};
            \node at (9, -1.3) {$\hat{\alpha}_2(t_2)$};
            \node at (7.75, 1) {$x$};
            \node at (10.25, 1) {$x$};
        \end{tikzpicture}
    \end{center}
    
    Let $v_1 = \hat{\alpha}_{1}'(0)$ and $v_2 = \hat{\alpha}_{2}'(1)$, the corresponding tangent vectors, using the induced orientation from $\alpha$'s domain. Since $\alpha$ is transverse with itself at $\alpha(0)$ and $\alpha(1)$, we must have that $\operatorname{det}(v_1 v_2) \neq 0$.
    Finally, define 
    \begin{align}\label{eq:r2}
        r_{2,\alpha} = \begin{cases}
            r_{1,\alpha} & \text{if } \operatorname{det}(v_1 v_2) > 0\\
            r_{1,\alpha} - 1 & \text{if } \operatorname{det}(v_1 v_2) < 0\\
        \end{cases}
    \end{align}
    and let $r = \frac{r_{1,\alpha} + r_{2,\alpha}}{2}$ which is clearly an element of $\frac{1}{2}\mathbb{Z}$, providing us with the last entry in the tuple.\footnote{If $x_1$ were located somewhere else on $\partial_A$, it might seem more natural to define $r_{2, \alpha}$ as $r_{1,\alpha} + 1$ when the determinant is positive and as $r_{1,\alpha}$ when it's negative. However, using equation (\ref{eq:r2}) instead of this for the definition of $r_{2, \alpha}$, merely corresponds to a $\frac{1}{2}$ shift in this $\frac{1}{2}\mathbb{Z}$-torsor and does not violate any assumptions. Furthermore, if we had chosen $\sigma$ to be the counterclockwise Dehn twist instead, we would, among other things, want to define $r_{2, \alpha}$ as $r_{1,\alpha}$ when the determinant is positive and as $r_{1,\alpha}+1$ when it's negative.} Notice that this representative of $\alpha$ is clearly equivalent to $\widetilde{f}_{A}(p,q,r)$ outside of $A$ and isotopic inside of $A$ as the decomposition of $r$ into $r_{1,\alpha}$ and $r_{2,\alpha}$ is unique. Therefore, $\left(\pi_{\text{iso}} \circ \widetilde{f}_{A}\right)(p,q,r) = [\alpha]$ and so this map is surjective.

    Finally, suppose $f_{A}(p,q,r) = f_{A}(p^\prime,q^\prime,r^\prime) = [\alpha]$. First notice that $\alpha$'s geometric intersection number with a fixed meridian and fixed longitude away from $x$ are invariant under isotopy. As these intersection numbers correspond to $p$ and $q$ respectively, $p = p^\prime$ and $q = q^\prime$. Notice that $\widetilde{f}_{A}(p,q,r) \cap\partial_A$ must completely agree with $\widetilde{f}_{A}(p^\prime,q^\prime,r^\prime) \cap\partial_A$ as well as they are equivalent on $T^2 \setminus A$ and therefore $[\widetilde{f}_{A}(p,q,r)] = [\widetilde{f}_{A}(p^\prime,q^\prime,r^\prime)]$ on $A$. Because they agree up to isotopy and since $r = \frac{r_{1,\alpha} + r_{2,\alpha}}{2}$ and $r^\prime = \frac{r_{1,\alpha}^\prime + r_{2,\alpha}^\prime}{2}$ have unique decompositions, $r = r^\prime$. Thus $(p,q,r) = (p^\prime, q^\prime, r^\prime)$ and so $f_{A}$ is bijective.
\end{proof}

\begin{lemma}\label{lemma:IntersectionDehnTwist}
    If $\alpha \in \mathscr{S}(\Sigma)$ is a tangle and $\gamma \in \mathscr{S}(\Sigma)$ is a closed curve such that the geometric intersection number of $\alpha$ and $\gamma$ is $1$, then $\frac{1}{q^2 - q^{-2}}[\alpha, \gamma]_q$ resolves to a Dehn twist of $\alpha$ along $\gamma$ and $\frac{-1}{q^2 - q^{-2}}[\alpha, \gamma]_{q^{-1}} = \frac{1}{q^2 - q^{-2}}[\gamma, \alpha]_q$ is the corresponding Dehn twist in the opposite direction.
\end{lemma}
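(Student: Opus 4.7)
The strategy is purely local and computational, relying on the Kauffman skein relation $(R_1)$ applied at the single crossing that appears when we form the products $\alpha\gamma$ and $\gamma\alpha$.

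First, I would set up the local picture. Because $\alpha$ and $\gamma$ meet transversely at exactly one point of $\Sigma$, the stacked diagram for the product $\alpha\gamma \in \mathscr{S}(\Sigma)$ projects to a diagram on $\Sigma$ with a single crossing at the intersection point, where the $\alpha$-strand passes over the $\gamma$-strand. The product $\gamma\alpha$ yields the same underlying diagram but with the crossing reversed. Call these two resulting diagrams $L_+$ and $L_-$ respectively. Outside a small disk around the intersection point, $L_+$, $L_-$, and their two common smoothings $L_0$ (horizontal) and $L_\infty$ (vertical) coincide.

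Next I would identify the two smoothings with Dehn twists. Locally at the crossing, $\alpha$ contributes two stubs on opposite corners and $\gamma$ contributes two stubs on the other two corners. One smoothing joins each $\alpha$-stub to one of the adjacent $\gamma$-stubs; travelling along the resulting tangle component, one starts at an endpoint of $\alpha$, follows it to the crossing, continues around the closed loop of $\gamma$, and then returns along the remainder of $\alpha$ to the other endpoint. Globally this is $\alpha$ with one full traversal of $\gamma$ inserted at the intersection point, i.e.\ the Dehn twist $\tau_\gamma^{\pm 1}(\alpha)$ of $\alpha$ along $\gamma$. The two smoothings $L_0$ and $L_\infty$ insert $\gamma$ into the cut-open $\alpha$ in the two possible planar matchings, which correspond to the two opposite directions of Dehn twist; call them $\tau_\gamma^+(\alpha)$ and $\tau_\gamma^-(\alpha)$.

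Then I would finish by a short computation using $(R_1)$. Writing $L_+ = qL_0 + q^{-1}L_\infty$ and $L_- = q^{-1}L_0 + q L_\infty$, and substituting $L_+ = \alpha\gamma$, $L_- = \gamma\alpha$, $L_0 = \tau_\gamma^+(\alpha)$, $L_\infty = \tau_\gamma^-(\alpha)$, one gets
\begin{align*}
[\alpha,\gamma]_q &= q\alpha\gamma - q^{-1}\gamma\alpha = q L_+ - q^{-1}L_- = (q^2 - q^{-2})L_0,\\
[\gamma,\alpha]_q &= q\gamma\alpha - q^{-1}\alpha\gamma = qL_- - q^{-1}L_+ = (q^2 - q^{-2})L_\infty,
\end{align*}
which gives the two Dehn twist formulas. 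The identity $[\gamma,\alpha]_q = -[\alpha,\gamma]_{q^{-1}}$ is then immediate from the definition of $[\,\cdot\,,\,\cdot\,]_q$, since both expressions equal $q\gamma\alpha - q^{-1}\alpha\gamma$.

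The only genuinely non-routine step is the second one: verifying carefully that the two local smoothings at the crossing really do produce tangles isotopic to $\tau_\gamma^+(\alpha)$ and $\tau_\gamma^-(\alpha)$, and that these are indeed distinct Dehn twists in opposite directions. This requires fixing orientations on a small annular neighborhood of $\gamma$ and tracking which planar matching of the four stubs corresponds to a positive versus negative Dehn twist. Once this bookkeeping is done, the algebraic conclusion is immediate.
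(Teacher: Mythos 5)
Your proposal is correct and follows essentially the same route as the paper: resolve the single crossing in each of $\alpha\gamma$ and $\gamma\alpha$ via $(R_1)$, observe that one smoothing cancels in the $q$-commutator, and identify the surviving smoothing with the Dehn twist (the paper, like you, reads this identification off the local picture rather than belaboring the orientation bookkeeping). Your explicit computation of $[\gamma,\alpha]_q = (q^2-q^{-2})L_\infty$ is a small addition; the paper gets the opposite-direction statement from the identity $[\gamma,\alpha]_q = -[\alpha,\gamma]_{q^{-1}}$ and symmetry.
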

\begin{proof}
    Suppose $\alpha$ and $\gamma$ intersect exactly once.
    Then locally, we have
    \begin{align*}
        [\alpha, \gamma]_q &= q\alpha \gamma - q^{-1}\gamma \alpha\\
        &= q
        \begin{tikzpicture}[baseline=0]
            \draw[dashed, fill=gray!40] (0,0) circle (1);
            \draw[thick] (-0.71, 0.71) -- (0.71, -0.71);
            \draw[line width=3mm, gray!40] (-0.5, -0.5) -- (0.5, 0.5);
            \draw[thick] (-0.71, -0.71) -- (0.71, 0.71);
            \node at (-0.3,-0.6) {$\alpha$};
            \node at (-0.3,0.6) {$\gamma$};
            \node (space) at (-1, 0) {};
        \end{tikzpicture} - q^{-1}
        \begin{tikzpicture}[baseline=0]
            \draw[dashed, fill=gray!40] (0,0) circle (1);
            \draw[thick] (-0.71, -0.71) -- (0.71, 0.71);
            \draw[line width=3mm, gray!40] (-0.5, 0.5) -- (0.5, -0.5);
            \draw[thick] (-0.71, 0.71) -- (0.71, -0.71);
            \node at (-0.3,-0.6) {$\alpha$};
            \node at (-0.3,0.6) {$\gamma$};
            \node (space) at (-1, 0) {};
        \end{tikzpicture} \\
        &= q^2
        \begin{tikzpicture}[baseline=0]
            \draw[dashed, fill=gray!40] (0,0) circle (1);
            \draw[thick] (-0.71, -0.71) to[out=45, in=-45] (-0.71, 0.71);
            \draw[thick] (0.71, -0.71) to[out=135, in=-135] (0.71, 0.71);
            \node (space) at (-1, 0) {};
        \end{tikzpicture} +
        \begin{tikzpicture}[baseline=0]
            \draw[dashed, fill=gray!40] (0,0) circle (1);
            \draw[thick] (-0.71, -0.71) to[out=45, in=135] (0.71, -0.71);
            \draw[thick] (-0.71, 0.71) to[out=-45, in=-135] (0.71, 0.71);
            \node (space) at (-1, 0) {};
        \end{tikzpicture} -
        \begin{tikzpicture}[baseline=0]
            \draw[dashed, fill=gray!40] (0,0) circle (1);
            \draw[thick] (-0.71, -0.71) to[out=45, in=135] (0.71, -0.71);
            \draw[thick] (-0.71, 0.71) to[out=-45, in=-135] (0.71, 0.71);
            \node (space) at (-1, 0) {};
        \end{tikzpicture} - q^{-2}
        \begin{tikzpicture}[baseline=0]
            \draw[dashed, fill=gray!40] (0,0) circle (1);
            \draw[thick] (-0.71, -0.71) to[out=45, in=-45] (-0.71, 0.71);
            \draw[thick] (0.71, -0.71) to[out=135, in=-135] (0.71, 0.71);
            \node (space) at (-1, 0) {};
        \end{tikzpicture}\\
        &= \left( q^2 - q^{-2} \right)
        \begin{tikzpicture}[baseline=0]
            \draw[dashed, fill=gray!40] (0,0) circle (1);
            \draw[thick] (-0.71, -0.71) to[out=45, in=-45] (-0.71, 0.71);
            \draw[thick] (0.71, -0.71) to[out=135, in=-135] (0.71, 0.71);
            \node (space) at (-1, 0) {};
        \end{tikzpicture}\\
    \end{align*}
    giving us our result.
    Noticing that $[b,a]_q = -[a,b]_{q^{-1}}$, we get $\frac{1}{q^2 - q^{-2}}[\gamma, \alpha]_{q} = \frac{-1}{q^2 - q^{-2}}[\alpha,\gamma]_{q^{-1}}$ is the Dehn twist in the opposite direction.
\end{proof}

\begin{lemma}\label{lemma:rationalcurves}
    If the ground ring $R$ contains $\mathbb{Q}[q^{\pm 1},\frac{1}{q^4-1}]$, then the algebra generated by the set $\{ X_{1,0}(\mu_{1}, \nu_{1}), \allowbreak X_{2,0}(\mu_{2}, \nu_{2}), \allowbreak X_{3,0}(\mu_{3}, \nu_{3}) \}$ for all $\mu_i, \nu_j \in \{ \pm \}$ contains any $\left\{ \mathbb{Q} \cup \frac{1}{0} \right\}$-sloped tangle in $\mathscr{S}\left( T^2 \setminus D^2 \right)$ with $0 \in \frac{1}{2}\mathbb{Z}$ twists.
\end{lemma}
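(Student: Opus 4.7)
The strategy is to obtain every $(p,q,0)$-tangle with any state assignment from the twelve base generators by iteratively applying Dehn twists along the closed curves $Y_1, Y_2, Y_3$, realized algebraically via the $q$-commutator formula of Lemma~\ref{lemma:IntersectionDehnTwist}. Under the hypothesis $1/(q^4-1)\in R$, the element $q^2-q^{-2}=q^{-2}(q^4-1)$ is invertible, so whenever a tangle $\alpha$ and a closed curve $\gamma$ both lie in the subalgebra and meet in a single geometric intersection point, Lemma~\ref{lemma:IntersectionDehnTwist} places both Dehn twists $\frac{1}{q^2-q^{-2}}[\alpha,\gamma]_q$ and $\frac{1}{q^2-q^{-2}}[\gamma,\alpha]_q$ inside the subalgebra. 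Each $Y_i$ lies in the subalgebra via the identity $Y_i=q^{1/2}X_{i,0}(+,-)-q^{5/2}X_{i,0}(-,+)$ established earlier, and for $i\ne j$ the curves $Y_i$ and $X_{j,0}(\mu,\nu)$ meet transversely in a single point in the interior of $T^2\setminus D^2$.

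Let $T_{p,q}(\mu,\nu)$ denote the stated tangle corresponding to $f_A(p,q,0)$ with endpoint states $(\mu,\nu)$. The induction proceeds on $|p|+|q|$: for a coprime pair $(p,q)$ not in the base set $\{(1,0),(0,1),(1,1)\}$ (taken modulo the equivalence $(p,q)\sim(-p,-q)$), one of the elementary moves $(p,q)\mapsto(p\pm q,q)$ or $(p,q)\mapsto(p,q\pm p)$ strictly decreases $|p|+|q|$ while preserving coprimality, corresponding to a forward or inverse Dehn twist along $Y_2$ or $Y_1$ respectively. Iterated application terminates at one of the generators $X_{i,0}(\mu,\nu)$, and unwinding the recursion expresses $T_{p,q}(\mu,\nu)$ as a nested $q$-commutator in the subalgebra. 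The states $(\mu,\nu)$ are carried through unchanged because Dehn twists fix the boundary endpoints, and all four state combinations are directly available from the generators.

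The main obstacle, and the point requiring most care, is verifying that each Dehn twist output yields the zero-twist representative $T_{p,q}(\mu,\nu)$ rather than a representative $f_A(p,q,r)$ with $r\ne 0$. The twist parameter $r$ of Theorem~\ref{theorem:classification} is determined entirely by the behavior of the tangle inside the annular collar $A$ of $\partial(T^2\setminus D^2)$, so the key is to choose representatives of each $Y_i$, together with a tubular neighborhood supporting the Dehn twist, entirely in $T^2\setminus(D^2\cup A)$. Such choices exist because $Y_1, Y_2, Y_3$ are essential simple closed curves in the interior of $T^2\setminus D^2$ that can be isotoped off the collar. The Dehn twist then fixes the tangle's restriction to $A$ pointwise, which inductively realizes the $r=0$ normal form produced by $\widetilde{f}_A$ in the proof of Theorem~\ref{theorem:classification}; hence the Dehn twist output is exactly $T_{p,q}(\mu,\nu)$ with $r=0$, as required.
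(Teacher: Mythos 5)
Your overall strategy (realize Dehn twists as $q$-commutators via Lemma~\ref{lemma:IntersectionDehnTwist} and run a Euclidean-type recursion on coprime pairs $(p,q)$) is the right family of ideas, but the induction step has a genuine gap. Lemma~\ref{lemma:IntersectionDehnTwist} only applies when the tangle and the closed curve have geometric intersection number exactly $1$; when they meet $n>1$ times the $q$-commutator does not resolve to a single Dehn twist, and there is no reason the (topologically well-defined) Dehn twist stays in the subalgebra. A $(p,q)$-tangle meets $Y_1$ in $|q|$ points, $Y_2$ in $|p|$ points, and $Y_3$ in $|p-q|$ points. So your elementary moves $(p,q)\mapsto(p\pm q,q)$ and $(p,q)\mapsto(p,q\pm p)$ are algebraically realizable only when $|q|=1$ or $|p|=1$ respectively, and even allowing twists along $Y_3$ you can only ever reach pairs of the form $(n,1)$, $(1,n)$, $(n,n\pm 1)$ starting from the generators: for example $(5,2)$ meets $Y_1$, $Y_2$, $Y_3$ in $2$, $5$, $3$ points, so none of your moves applies to it and the recursion cannot produce it (nor, going the other way, reduce it).

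The paper avoids this by using the Farey/continued-fraction decomposition of Lemma~1 in \cite{FG00}: write $(p,q)=(u,v)+(w,z)$ with $\det\left(\begin{smallmatrix} u & w\\ v & z\end{smallmatrix}\right)=\pm 1$, and apply the Dehn twist to the $(w,z)$-tangle along the \emph{closed $(u,v)$-curve}, which does meet it exactly once. That auxiliary curve is generally not one of $Y_1,Y_2,Y_3$; it is itself built recursively as a nested $q$-commutator of the $Y_i$ (this is what the iterated brackets in the $(5,3)$ example are doing). To repair your argument you would need to enlarge the set of curves you twist along to include all $(u,v)$-curves produced by such a decomposition, at which point you have essentially reconstructed the paper's proof. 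Your final paragraph about controlling the twist parameter $r$ by supporting the twist away from the annular collar $A$ is a legitimate and worthwhile point of care (the paper is terse about it), but it does not address the main obstruction above.
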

\begin{proof}
    Recall that $\left| \operatorname{det} \begin{pmatrix} a & c\\ b & d \end{pmatrix} \right| = n$ if and only if the $(a,b)$-curve and the $(c,d)$-curve (or $(c,d)$-tangle) have a geometric intersection number of $n$.
    Consider the map of sets, $\sigma : GL_2(\mathbb{Z}) \to \mathbb{Z}^2$ defined by $A \mapsto A\cdot \left[ \begin{matrix} 1\\1\end{matrix} \right]$.
    Suppose $p$ and $q$ are coprime.
    If they are not coprime then the $(p,q,0)$-tangle intersects itself and can be resolved into curves and $\partial (T^2 \setminus D^2)$-tangles with $\mathbb{Q} \bigcup \frac{1}{0}$ slopes.
    We'll also assume that $0 < q < p$ (the proof is symmetric for negative slopes and when $q > p$).
    
    By Lemma 1 in \cite{FG00}, we can decompose $p$ and $q$ into $u + w = p$ and $v + z = q$ such that $\operatorname{det} \begin{pmatrix} u & w\\ v & z \end{pmatrix} = \pm 1$ with $0 < w < p$, $0 < u < p-1$, and $v, z > 0$, for $p \geq 3$.
    Thus for $\begin{pmatrix} p\\ q \end{pmatrix} \in \mathbb{Z}^2$, there exists an inverse, $\sigma^{-1} \begin{pmatrix} p\\ q \end{pmatrix} = \begin{pmatrix} u & w\\ v & z \end{pmatrix}$.
    We then find the inverse of the second column, $\sigma^{-1} \begin{pmatrix} w\\ z \end{pmatrix}$, and repeat until we get $\begin{pmatrix} p'\\ q' \end{pmatrix}$ for $q' < p' \leq 2$.
    Using Lemma~\ref{lemma:IntersectionDehnTwist}, each step in the reverse process of this algorithm corresponds to a Dehn twist, $\frac{\pm 1}{q^2 - q^{-2}}\left[ Y_j, - \right]_{q^{\pm 1}}$, along some $Y_j$-curve corresponding to the first column.
    
    Finally, note that the $(2,1)$-tangle is equal to $\frac{1}{q^2 - q^{-2}} \left[ Y_1, X_{3,0}(\mu, \nu) \right]_q$.
\end{proof}

\begin{example}
    To illustrate the application of this algorithm, let's examine the (5,3)-tangle with states $\mu$ and $\nu$, which we'll denote as $\tilde{X}(\mu, \nu)$.
    The matrices of interest in this example are
    $$\sigma\begin{pmatrix} 0 & 1\\ 1 & 0 \end{pmatrix} = \begin{pmatrix} 1\\ 1 \end{pmatrix}, \quad\quad
    \sigma\begin{pmatrix} 1 & 1\\ 0 & 1 \end{pmatrix} = \begin{pmatrix} 2\\ 1 \end{pmatrix}, \quad\quad
    \sigma\begin{pmatrix} 2 & 1\\ 1 & 1 \end{pmatrix} = \begin{pmatrix} 3\\ 2 \end{pmatrix}, \quad\quad
    \sigma\begin{pmatrix} 3 & 2\\ 2 & 1 \end{pmatrix} = \begin{pmatrix} 5\\ 3 \end{pmatrix}.$$
    Thus, we get the following series of Dehn twists
    $$ \frac{-1}{\left( q^2 - q^{-2} \right)^{6}}\left[ \left[ Y_3, \left[ Y_1, \left[ Y_2, Y_1 \right]_{q^{-1}} \right]_q \right]_{q^{-1}}, \left[ Y_1, \left[ Y_2, X_{1,0}(\mu, \nu) \right]_{q^{-1}} \right]_q \right]_{q} = \tilde{X}(\mu, \nu).$$
\end{example}

\begin{theorem}\label{theorem:Generators}
    If the ground ring $R$ contains $\mathbb{Q}[q^{\pm 1},\frac{1}{q^4-1}]$, then the set $B$ generates $\mathscr{S}\left( T^2 \setminus D^2 \right)$ as an algebra.
\end{theorem}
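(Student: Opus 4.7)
The overall strategy is to combine L\^{e}'s basis theorem (cited just before Theorem~\ref{theorem:classification}) with the classification in Theorem~\ref{theorem:classification} and the Dehn-twist machinery of Lemmas~\ref{lemma:IntersectionDehnTwist} and~\ref{lemma:rationalcurves}. By L\^{e}'s theorem every element of $\mathscr{S}(T^2\setminus D^2)$ is an $R$-linear combination of increasingly stated simple $\partial M$-tangle diagrams, so it suffices to place every such basis element in the subalgebra $\langle B\rangle$ generated by $B$. A simple diagram decomposes as a disjoint non-crossing union of connected components; stacking realizes disjoint union as the algebra product up to corrections from $(R_1)$ that keep us inside $\langle B\rangle$ provided each component is there. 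I would therefore split the task into (i) putting every underlying unoriented simple component into $\langle B\rangle$ with at least one state assignment, and (ii) showing that every other state assignment on the same diagram is reachable from that one by $(R_5)$ and $(R_6)$ modulo elements of $\langle B\rangle$. Step (ii) is essentially automatic, because the right-hand sides of $(R_5)$ and $(R_6)$ produce only the state-swapped diagram, scalar multiples of simpler diagrams, and constants times the $Y_i=q^{1/2}X_{i,0}(+,-)-q^{5/2}X_{i,0}(-,+)\in\langle B\rangle$.

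For step (i) the connected components fall into four families: non-peripheral closed $(p,q)$-torus curves, the boundary curve of $T^2\setminus D^2$, simple parallel boundary-tangles, and non-parallel $(p,q,r)$-arcs from Theorem~\ref{theorem:classification}. The boundary curve and each parallel tangle are already written as polynomials in the $Y_i$ and $X_{j,0}(\mu,\nu)$ at the beginning of Section~\ref{section:Generators}. Closed $(p,q)$-curves are generated by the same Fibonacci-style algorithm used in the proof of Lemma~\ref{lemma:rationalcurves}, iterating the $q$-bracket Dehn twists of Lemma~\ref{lemma:IntersectionDehnTwist} starting from $Y_1$, $Y_2$, $Y_3$; this is where invertibility of $q^2-q^{-2}$, guaranteed by the hypothesis $1/(q^4-1)\in R$, is essential. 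Lemma~\ref{lemma:rationalcurves} itself covers the $r=0$ layer of the non-parallel arcs.

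The main obstacle is raising the twist parameter $r\in\tfrac12\mathbb{Z}$ from $0$ to every other value, since the boundary curve has zero geometric intersection with the arcs $X_{i,0}$ and so Lemma~\ref{lemma:IntersectionDehnTwist} does not apply directly. My plan is a diagrammatic induction: stack the boundary curve above a $(p,q,r)$-arc, resolve the two crossings that appear near the marking with $(R_1)$, and simplify the resulting trivial-arc caps using $(R_3)$ and $(R_4)$. I expect the identity to take the form
\[
\partial \cdot X(p,q,r)\;=\;\lambda\, X(p,q,r+1)\;+\;(\text{terms of lower $r$ and closed curves already in }\langle B\rangle)
\]
with $\lambda$ an invertible scalar in $R$. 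Since $\partial\in\langle B\rangle$ and $X(p,q,r)\in\langle B\rangle$ by the inductive hypothesis, this promotes $X(p,q,r+1)$ into $\langle B\rangle$. The half-step $r\mapsto r+\tfrac12$ would be handled separately by one direct computation on a single representative (for instance $X_{1,0}\mapsto X_{1,1/2}$, which can be read off the state-exchange identity relating $(+,-)$ and $(-,+)$ stated versions of an arc bent around the puncture), after which the full integer recursion transports this half-step along every slope.

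Verifying the boxed identity is the technical heart of the argument and is very much in the spirit of the parallel-tangle and boundary-curve computations already performed in Section~\ref{section:Generators}. Once it is in place, the three ingredients --- state reduction via $(R_5),(R_6)$; Fibonacci/Dehn-twist generation of $(p,q,0)$ components; and the twist induction raising $r$ --- assemble to show that every increasingly stated simple basis diagram of $\mathscr{S}(T^2\setminus D^2)$ lies in $\langle B\rangle$, which is the conclusion of the theorem.
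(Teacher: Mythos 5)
Your skeleton (L\^{e}'s basis theorem, Lemma~\ref{lemma:rationalcurves} for the zero-twist layer, then an induction on the twist parameter $r$) is the same as the paper's, but your engine for changing $r$ is genuinely different. The paper never multiplies by the boundary curve: it proves the explicit identities \eqref{eq:shiftup}--\eqref{eq:shiftdown}, triple iterated $q$-commutators with $Y_{i+1},Y_i,Y_{i-1}$ that send $X_{i,k}(\mu,\nu)$ to $(q^2-q^{-2})^3\,X_{i,k\pm1}(\mu,\nu)$ on the nose, with no lower-order remainder, and then feeds these into the Dehn-twist algorithm of Lemma~\ref{lemma:rationalcurves}. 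Your route via $\partial$ is a Chebyshev-style product-to-sum recursion instead. (Your step (ii) on states is not needed as a separate step, since $B$ already carries all four state pairs and the commutator manipulations are applied to each of them.)

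The gap is in your boxed identity and the induction built on it. Resolving the two crossings of $\partial$ over $X(p,q,r)$ yields four terms with coefficients $q^{2},1,1,q^{-2}$; the two coherent smoothings are the two opposite boundary Dehn twists, so the identity necessarily has the three-term form
\[
\partial\cdot X(p,q,r)\;=\;q^{2}\,X(p,q,r+1)\;+\;q^{-2}\,X(p,q,r-1)\;+\;(\text{multiples of }X(p,q,r)),
\]
not ``$\lambda X(p,q,r+1)$ plus terms of lower $r$ already known.'' At the very first step ($r=0$) the right-hand side contains $X(p,q,-1)$, which is not yet in $\langle B\rangle$, so the induction does not close with a single integer base case per half-integer class. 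You must either supply two consecutive base cases ($r=0$ and $r=\pm1$, and likewise $r=\pm\tfrac12$ and $r=\pm\tfrac32$), or replace the plain product by the combination $q^{2}\partial\alpha-q^{-2}\alpha\partial$, which cancels the $X(p,q,r-1)$ term and has leading coefficient $q^{4}-q^{-4}$, invertible by your hypothesis on $R$. Separately, both your argument and the paper's leave implicit that the slope-changing twists of Lemma~\ref{lemma:IntersectionDehnTwist} interact controllably with the parameter $r$ (you need this to ``transport the half-step along every slope''), and your key identity is asserted rather than verified, whereas the paper's analogue is the concrete pair of displayed formulas \eqref{eq:shiftup}--\eqref{eq:shiftdown}.
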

\begin{proof}
    A short calculation shows that
    \begin{align}
        X_{i,k+1}(\mu, \nu) &= \frac{1}{(q^2 - q^{-2})^3} \left[Y_{i+1}, \left[ Y_i, \left[ Y_{i-1}, X_{i,k}(\mu, \nu) \right]_q \right]_q \right]_q \label{eq:shiftup}\\
        X_{i,k-1}(\mu, \nu) &= \frac{1}{(q^2 - q^{-2})^3} \left[ \left[ \left[ X_{i,k}(\mu, \nu), Y_{i+1} \right]_q, Y_{i} \right]_q, Y_{i-1} \right]_q \label{eq:shiftdown}
    \end{align}
    for $i \mod 3$ and $\mu, \nu \in \{ \pm \}$.
    Using this result and Lemma~\ref{lemma:rationalcurves}, we can construct any $(p,q,r)$-tangle.
    By \cite{Le18}, the set of all isotopy classes of increasingly stated, simple $\partial(T^2 \setminus D^2)$-tangle diagrams forms a basis for $\mathscr{S}\left( T^2 \setminus D^2 \right)$.
    Since every simple non-parallel tangle can be expressed as a $(p,q,r)$-tangle, every simple $\partial\left( T^2 \setminus D^2 \right)$-tangle diagram can be written as a linear combination of products of these $(p,q,r)$-tangles and powers of our single parallel tangle.
    Thus, the stated skein algebra generated by $B$ spans the entire space.\hfill
\end{proof}

\begin{remark}
    Since we have the relation
    $$X_{3,0}(\mu, \nu) = \frac{1}{q^2 - q^{-2}}\left[ X_{1,0}(\mu, \nu), q^{1/2}X_{2,0}(+,-) - q^{5/2}X_{2,0}(-,+) \right]_q,$$
    we don't technically need to include $X_{3,0}(\mu, \nu)$ to generate $\mathscr{S}(T^2 \setminus D^2)$. We only need the eight elements $\left\{ X_{i,0}(\mu, \nu) \, \mid \mu, \nu \in \{ \pm \}, i \in \{1,2\} \right\}$. However, as with $K_q(T^2 \setminus D^2)$, it is often more notationally convenient to include $X_{3,0}(\mu, \nu)$ as well.
\end{remark}
\section{DAHA modules from quantum tori}\label{section:modsQTori}

\subsection{General surfaces}

Given an antisymmetric integral $n \times n$ matrix $Q$, the associated \textit{quantum torus} is defined as
$$\mathbb{T}^{n}(Q) := \frac{\mathbb{C} \left[ x_1^{\pm 1}, \cdots, x_n^{\pm 1} \right]}{\left( x_i x_j = q^{Q_{ij}} x_j x_i \right)}$$
and the corresponding \textit{quantum plane} is
$$\mathbb{T}_{+}^{n}(Q) := \frac{\mathbb{C} \left[ x_1, \cdots, x_n \right]}{\left( x_i x_j = q^{Q_{ij}} x_j x_i \right)}.$$

\begin{prop}[Proposition 2.2 in \cite{LY22}]\label{prop:embedIntoQT}
    Let $Q$ be an antisymmetric integral $r \times r$ matrix, and let $A$ be an $\mathcal{R}$-algebra containing the quantum plane $\mathbb{T}_+^r(Q)$ as a subalgebra. If $A$ is a domain and for every $a \in A$ there exists a $\mathbf{k} \in \mathbb{N}^r$ such that $x^\mathbf{k}a \in \mathbb{T}_+^r(Q) \subset A$, where $x^\mathbf{k} := x_1^{k_1} x_2^{k_2} \cdots x_r^{k_r}$, then $A$ is an Ore domain and the embedding $\mathbb{T}_+^r(Q) \hookrightarrow \mathbb{T}^r(Q)$ uniquely extends to an algebra embedding $\varphi_{\mathcal{E}} : A \hookrightarrow \mathbb{T}^r(Q)$.
\end{prop}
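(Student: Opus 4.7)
The plan is to construct $\varphi_{\mathcal{E}}$ explicitly by using the hypothesis on $A$ as a ``left denominator'' condition, with the normality of the generators $x_i$ as the technical engine. First, for any monomial $x^{\mathbf{n}}$ we have $x_i x^{\mathbf{n}} = q^{\sum_j Q_{ij}n_j}\, x^{\mathbf{n}} x_i$, and extending by linearity gives, for each $f \in \mathbb{T}_+^r(Q)$, a unique $f' \in \mathbb{T}_+^r(Q)$ with $x_i f = f' x_i$. Hence each $x_i$ is a normal element of the Noetherian domain $\mathbb{T}_+^r(Q)$, the set $S := \{x^{\mathbf{k}} : \mathbf{k} \in \mathbb{N}^r\}$ is an Ore set of normal elements, and $\mathbb{T}_+^r(Q)[S^{-1}] \cong \mathbb{T}^r(Q)$.

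I would then define $\varphi_{\mathcal{E}} : A \to \mathbb{T}^r(Q)$ by
\[
\varphi_{\mathcal{E}}(a) \,:=\, (x^{\mathbf{k}})^{-1} \cdot (x^{\mathbf{k}} a) \quad \text{in } \mathbb{T}^r(Q),
\]
where $\mathbf{k}$ is any multi-index with $x^{\mathbf{k}} a \in \mathbb{T}_+^r(Q)$. Well-definedness is proved by taking a second $\mathbf{l}$ with $x^{\mathbf{l}} a \in \mathbb{T}_+^r(Q)$ and using associativity in $A$ together with the commutation identity $x^{\mathbf{l}} x^{\mathbf{k}} = q^{c(\mathbf{l},\mathbf{k})} x^{\mathbf{k}} x^{\mathbf{l}}$ inside $\mathbb{T}_+^r(Q)$, to obtain $x^{\mathbf{l}}(x^{\mathbf{k}} a) = q^{c(\mathbf{l},\mathbf{k})} x^{\mathbf{k}}(x^{\mathbf{l}} a)$ in $\mathbb{T}_+^r(Q)$; transposing to $\mathbb{T}^r(Q)$ and cancelling the scalar yields $(x^{\mathbf{k}})^{-1}(x^{\mathbf{k}} a) = (x^{\mathbf{l}})^{-1}(x^{\mathbf{l}} a)$.

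To show $\varphi_{\mathcal{E}}$ is an algebra homomorphism, additivity reduces to picking a common $\mathbf{k}$ for $a, b$, and $a+b$ via a sufficiently large multi-index. The substantive step is multiplicativity. Given $a, b \in A$ with $a' := x^{\mathbf{k}} a$ and $b' := x^{\mathbf{l}} b$ both in $\mathbb{T}_+^r(Q)$, the normality of $x^{\mathbf{l}}$ in $\mathbb{T}_+^r(Q)$ produces $a'' \in \mathbb{T}_+^r(Q)$ with $x^{\mathbf{l}} a' = a'' x^{\mathbf{l}}$, i.e.\ $a'(x^{\mathbf{l}})^{-1} = (x^{\mathbf{l}})^{-1} a''$ in $\mathbb{T}^r(Q)$. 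Then inside $A$ we have $x^{\mathbf{l}} x^{\mathbf{k}}(ab) = x^{\mathbf{l}} a' b = a''(x^{\mathbf{l}} b) = a'' b' \in \mathbb{T}_+^r(Q)$, and invoking well-definedness (with the ``denominator'' $x^{\mathbf{l}} x^{\mathbf{k}}$, which is a scalar multiple of $x^{\mathbf{l}+\mathbf{k}} \in S$) gives
\[
\varphi_{\mathcal{E}}(ab) \,=\, (x^{\mathbf{k}})^{-1}(x^{\mathbf{l}})^{-1} a'' b' \,=\, (x^{\mathbf{k}})^{-1} a' (x^{\mathbf{l}})^{-1} b' \,=\, \varphi_{\mathcal{E}}(a) \varphi_{\mathcal{E}}(b).
\]
Injectivity is immediate: $\varphi_{\mathcal{E}}(a) = 0$ forces $x^{\mathbf{k}} a = 0$ in $A$, and since $A$ is a domain and $x^{\mathbf{k}} \ne 0$ we conclude $a = 0$.

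Finally, the Ore property of $A$ follows from the embedding. Every element of $\mathbb{T}^r(Q)$ has the form $s^{-1}b$ with $s \in S \subset A$ and $b \in \mathbb{T}_+^r(Q) \subset A$; combining this with the Ore condition in the Noetherian domain $\mathbb{T}^r(Q)$ and using injectivity of $\varphi_{\mathcal{E}}$, any pair of nonzero elements of $A$ admits a common left (and right) multiple that already lives in $A$, so $A$ is an Ore domain. Uniqueness of the extension is automatic: any algebra map $A \to \mathbb{T}^r(Q)$ restricting to the identity on $\mathbb{T}_+^r(Q)$ must respect the identity $x^{\mathbf{k}} \cdot a = x^{\mathbf{k}} a$ in $A$, forcing the image of $a$ to equal $(x^{\mathbf{k}})^{-1}(x^{\mathbf{k}} a)$. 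The main technical obstacle is the multiplicativity verification; although it ultimately rests on the single normality identity $a'(x^{\mathbf{l}})^{-1} = (x^{\mathbf{l}})^{-1} a''$, one must carefully reconcile the fact that the natural ``composite denominator'' $x^{\mathbf{l}} x^{\mathbf{k}}$ arising from the product is not literally a monomial in $S$, which is where the well-definedness result does the heavy lifting by absorbing the leftover $q$-commutation scalars.
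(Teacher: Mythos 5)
The paper itself gives no proof of this proposition --- it is quoted verbatim from L\^e--Yu \cite{LY22} --- so there is nothing internal to compare against; your argument is, in substance, the standard Ore-localization proof that \cite{LY22} uses. Your construction is correct: the monomials $x^{\mathbf{k}}$ are normal in $\mathbb{T}_+^r(Q)$, so $S=\{x^{\mathbf{k}}\}$ is an Ore set with $\mathbb{T}_+^r(Q)[S^{-1}]\cong\mathbb{T}^r(Q)$; the formula $\varphi_{\mathcal{E}}(a)=(x^{\mathbf{k}})^{-1}(x^{\mathbf{k}}a)$ is well defined by the $q$-commutation of monomials; additivity follows by enlarging $\mathbf{k}$ (any $\mathbf{k}'\ge\mathbf{k}$ also works, since $x^{\mathbf{k}'}a$ is a scalar times $x^{\mathbf{k}'-\mathbf{k}}(x^{\mathbf{k}}a)$); your multiplicativity computation via $x^{\mathbf{l}}a'=a''x^{\mathbf{l}}$ is right, and the scalar discrepancy between $x^{\mathbf{l}}x^{\mathbf{k}}$ and $x^{\mathbf{l}+\mathbf{k}}$ cancels exactly as you say; injectivity and uniqueness are immediate as you argue. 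The one soft spot is the final claim that $A$ is an \emph{Ore} domain: your argument cleanly produces common \emph{left} multiples (the hypothesis $x^{\mathbf{k}}a\in\mathbb{T}_+^r(Q)$ is one-sided, and you transport the left Ore condition of $\mathbb{T}^r(Q)$ back through $\varphi_{\mathcal{E}}$), but if ``Ore domain'' is meant two-sidedly you need the symmetric argument on the right, which requires clearing denominators on the other side using normality of the monomials; this is routine but should be said. With that sentence added, the proof is complete.
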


The map $\varphi_{\mathcal{E}}$ is associated to a quasitriangulation of $\Sigma$, denoted $\mathcal{E}$, and is discussed extensively in \cite{LY22}. Bonahon and Wong constructed a similar map in \cite{BW11} where $\mathbb{T}^r(Q)$ would instead be rational functions in skew-commuting variables associated to the square roots of the shear coordinates of Chekhov and Fock's enhanced quantum Teichm\"{u}ller space.
Furthermore, Bonahon and Wong demonstrated that a change in quasitriangulation induces an algebra isomorphism between the respective Teichm\"{u}ller spaces, which can also be understood as a change of shear coordinates.

We define an \textit{ideal multiarc} in $\Sigma$ as a finite collection of disjoint ideal arcs. A \textit{quasitriangulation} of $\Sigma$, denoted $\mathcal{E}$, is then defined as a maximal collection of nontrivial non-isotopic ideal arcs. Notice that $\mathcal{E}$ can be decomposed as $\mathcal{E} = \mathring{\mathcal{E}} \bigsqcup \mathcal{E}_\partial$, where $\mathring{\mathcal{E}}$ are the ideal arcs not parallel to any boundary components, and $\mathcal{E}_\partial$ are the ideal arcs that are parallel to boundary components. Let $\hat{\mathcal{E}}_\partial$ be a copy of $\mathcal{E}_\partial$, and define $\overline{\mathcal{E}} = \mathcal{E} \bigsqcup \hat{\mathcal{E}}_\partial = \left( \mathring{\mathcal{E}} \bigsqcup \mathcal{E}_\partial \right) \bigsqcup \hat{\mathcal{E}}_\partial$. Using $\overline{\mathcal{E}}$, L\^{e} and Yu demonstrated that stated skein algebras meet the conditions for Proposition~\ref{prop:embedIntoQT} and thus always lie between a quantum plane and a quantum torus (see Theorem~4.2 in \cite{LY22}).

In particular, we can correspond each $e \in \overline{\mathcal{E}}$ to a generator, $x_e$, of some quantum plane $\mathbb{T}_{+}^{r}(Q)$. The matrix $Q$ is defined using the anti-symmetric function
\begin{align*}
    Q(a,b) &= \# \locIdeal{b}{a} - \# \locIdeal{a}{b} & \text{for } a,b \in \mathcal{E}\\
    Q(a,\hat{b}) &= - \# \locIdeal{b}{a} - \# \locIdeal{a}{b} & \text{for } a \in \mathcal{E}, b \in \mathcal{E}_\partial \\
    Q(\hat{a}, \hat{b}) &= -Q(a,b) & \text{for } a,b \in \mathcal{E}_\partial.
\end{align*}
and canonically identifying the entries of $Q$ as $Q_{a,b} = Q(a,b)$.
By $\# \resizebox{0.8\width}{!}{\locIdeal{i}{j}}$ we mean the number of times a half edge of ideal arc $i$ and a half edge of ideal arc $j$ meet at the same ideal point with $i$ following $j$ in the clockwise order.

\begin{remark}
In geometry, ideal points are actually points that are removed from the boundary of a surface, making it non-compact. In our skein-theoretic constructions, the ideal points are filled in and become (vertical) marked intervals that our stated tangles end on. In the diagrams below, these marked intervals are indicated by white dots.
\end{remark}

Define $\psi_{\mathcal{E}} : \mathbb{T}_+^r(Q) \to \mathscr{S}(\Sigma)$ to be the map that sends a generator $x_e$ to a rescaling of the  diagram representing the edge $e \in \overline{\mathcal{E}}$, where the diagram has positive states if $e \in \mathcal{E}$ and has the following mixed states if $e \in \hat{\mathcal{E}}_\partial$.
$$\locIdeal{}{} \mapsto
\begin{cases}
    q^{-1/2} \text{ } \locTangle{+}{+} & \text{ if } e \in \mathcal{E}\\
    q^{1/2} \text{\phantom{aa}} \locTangle{+}{-} & \text{ if } e \in \hat{\mathcal{E}}_\partial.
\end{cases}$$
The rescaling constants $q^{\pm 1/2}$ in the above equation are introduced so that $x_e$ is reflection invariant, which follows by the $(R_6)$ relation.

\begin{lemma}[Lemma~4.9 in \cite{LY22}]
    For any $\alpha \in \mathscr{S}(\Sigma)$, there exists a monomial $m(x_1, \cdots, x_r) \in \mathbb{T}_{+}^{r}(Q)$ such that the product $\psi_{\mathcal{E}}\left(m(x_1, \cdots, x_r)\right) \alpha$ lies in the image of $\psi_{\mathcal{E}}$.
\end{lemma}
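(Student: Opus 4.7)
The approach is to induct on the complexity $c(D) := |D \cap \mathring{\mathcal{E}}|$, where $D$ is a simple diagram representative of $\alpha$ placed in general position with respect to $\mathcal{E}$. By L\^{e}'s basis theorem for stated skein algebras (already invoked in Theorem~\ref{theorem:Generators}), it suffices to establish the claim when $\alpha$ is a single simple stated tangle; a general linear combination is handled by choosing a common monomial that clears each basis element, using the $q$-commutation relations in $\mathbb{T}_{+}^{r}(Q)$ to combine and reorder.

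For the base case $c(D) = 0$, the diagram $D$ is contained in the union of the ideal triangles cut out by $\mathcal{E}$ together with bigons along the boundary components. Inside each such region, every arc of $D$ connects two of the sides. Using the trivial-arc relations $(R_3),(R_4)$ together with the state-exchange $(R_5)$ and height-exchange $(R_6)$ relations, each local piece reduces to a scalar multiple of a product of the standard positively-stated edge arcs for $e \in \mathcal{E}$ or the mixed-state arcs for $\hat{e} \in \hat{\mathcal{E}}_\partial$. Hence $D$ already lies in $\operatorname{im}(\psi_{\mathcal{E}})$, and we take $m = 1$.

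For the inductive step $c(D) > 0$, pick an interior edge $e \in \mathring{\mathcal{E}}$ that meets $D$, and consider the product $\psi_{\mathcal{E}}(x_e)\cdot \alpha$, realized by stacking the positively-stated diagram of $e$ above $D$. At each crossing where $e$ passes over $D$, apply the skein relation $(R_1)$ to replace the crossing by a linear combination of its two smoothings. Neither smoothing creates a new interior intersection with $\mathring{\mathcal{E}}$, since the portion of $e$ incorporated into the resulting diagram sits along the edge $e$ itself, which meets no other edges of $\mathcal{E}$ in the interior. If a smoothing cuts a strand of $D$, its endpoints are pushed onto the ideal points at the two ends of $e$, and the resulting states are normalized back into the $\pm$-basis via $(R_5)$. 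The outcome is a finite sum of stated tangles $\alpha_i$, each (after isotopy) simple and of strictly smaller complexity. Apply the inductive hypothesis to each $\alpha_i$ to obtain monomials $m_i$ with $\psi_{\mathcal{E}}(m_i)\alpha_i \in \operatorname{im}(\psi_{\mathcal{E}})$, and then multiply $x_e$ together with a common supermonomial of the $m_i$'s (reordered via the commutation relations of $\mathbb{T}_{+}^{r}(Q)$) to produce the required $m$ for $\alpha$.

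The main obstacle is the inductive step, specifically controlling the new endpoint states produced by the cutting-type smoothing: when $e$ is boundary-parallel one is forced to invoke the tilde-copy generators $x_{\hat{e}}$ carrying mixed states, and one must verify carefully that every resolved term can genuinely be isotoped to a simple diagram of strictly smaller complexity (rather than merely non-increasing), so that the induction terminates after finitely many steps.
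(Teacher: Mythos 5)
The paper does not actually prove this statement; it is imported verbatim from \cite{LY22}, so I am judging your argument against what that proof has to accomplish. Your geometric skeleton --- induction on the intersection number of a simple representative with $\mathring{\mathcal{E}}$, reducing complexity by multiplying by $\psi_{\mathcal{E}}(x_e)$ and resolving the resulting crossings with $(R_1)$ --- is the right idea and is essentially Muller's argument for ordinary skein algebras, which L\^e--Yu adapt. The problem is your base case, and it is a genuine gap rather than a presentational one. A complexity-zero diagram is a height-ordered product of parallel copies of edges of $\overline{\mathcal{E}}$ carrying \emph{arbitrary} states, and such a diagram does \emph{not} already lie in $\operatorname{im}(\psi_{\mathcal{E}})$, so ``take $m=1$'' fails. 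For instance, the single interior edge arc $e$ with states $(-,-)$ is one of L\^e's basis elements and is not in the subalgebra generated by the positively-stated arcs $\psi_{\mathcal{E}}(x_e)$ and the mixed-state arcs $\psi_{\mathcal{E}}(x_{\hat e})$ (in the quantum torus its image involves $x_e^{-1}$); the relations $(R_3)$--$(R_6)$ alone do not convert it into such a product. Clearing the ``bad'' states is precisely the nontrivial content of Lemma~4.9 in the \emph{stated} setting: one needs the explicit identities expressing $x_e\cdot e(-,-)$ and $x_e\cdot e(\mp,\pm)$ in terms of the distinguished generators (in \cite{LY22} these come from splitting along $e$ and using the $\mathcal{O}_q(SL_2)$-type relations, including the quantum-determinant relation, satisfied by the four stated versions of an edge arc). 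Without this, your induction terminates at a base case that is false as stated, so the argument does not close.

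A secondary point: your description of the inductive step misstates what the Kauffman resolution does. Applying $(R_1)$ at a crossing of $\psi_{\mathcal{E}}(x_e)$ with $D$ replaces the crossing by two planar smoothings and creates no new tangle endpoints; no strand of $D$ is ``cut'' and no endpoints are ``pushed onto the ideal points.'' You appear to be conflating the skein relation with the splitting (cutting) homomorphism of \cite{LY22}, which is a different operation. The complexity count itself (each resolved term meets $\mathring{\mathcal{E}}$ strictly fewer times, since the segments running along $e$ meet no other edge of the quasitriangulation) is fine, but the states carried by the resolved terms are simply those already present on $D$ and on the two new endpoints of $e$, so the real state-fixing work is deferred entirely to the base case --- which is exactly where your argument is incomplete.
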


Since $\varphi_{\mathcal{E}}$ and $\psi_{\mathcal{E}}$ are algebra homomorphisms and $(\varphi_{\mathcal{E}} \circ \psi_{\mathcal{E}})(x_e) = x_e$, we can explicitly find where any element in our skein algebra is mapped to using the following trick
$$\varphi_{\mathcal{E}}(\alpha) = m^{-1}(x_1, \cdots, x_r) m(x_1, \cdots, x_r) \varphi_{\mathcal{E}}(\alpha) = m^{-1}(x_1, \cdots, x_r) \varphi_{\mathcal{E}}(\psi_{\mathcal{E}}(m(x_1, \cdots, x_r)) \alpha),$$
where $\mathscr{S}(\Sigma)$ is viewed as the canonical $T_{+}^{r}(Q)$-module induced by $\psi_{\mathcal{E}}$, i.e. $m \cdot \alpha := \psi_{\mathcal{E}}(m)\alpha$. This is a slight generalization of Lemma~6.9 in \cite{Mul16}.

\subsection{The punctured torus embedding}

We will now perform this calculation for when $\mathfrak{S} = T^2 \setminus D^2$, with a single marked point on the boundary. Let $\mathcal{E}$ be the quasitriangulation of $T^2 \setminus D^2$ shown in Figure~\ref{fig:quasitriangulation}.
\begin{figure}[h]
    \centering
    \resizebox{2\width}{!}{\begin{tikzpicture}
        \draw[draw=none, fill=gray!40] (0,1) -- (2,1) -- (2,-1) -- (0,-1) -- (1, -0.15) to[out=0, in=-90] (1.15, 0) to[out=90, in=0] (1, 0.15) to[out=180, in=90] (0.85, 0) to[out=-90, in=180] (1, -0.15) -- (0, -1) -- (0,1);
        \draw[thick, yellow] (1, 0) circle (0.15);
        \draw[thick, red] (1.3, -1) to[out=90, in=270] (1.3, 0) to[out=90, in=0] (1, 0.15) to[out=60, in=270] (1.3, 1);
        \draw[thick, blue] (0.4, -1) -- (0.4, -0.2) to[out=90, in=180] (1, 0.15) to[out=45, in=180] (2, 0.6);
        \draw[thick, blue] (0, 0.6) to[out=0, in=270] (0.4, 1);
        \draw[thick, green] (0.7, -1) to[out=90, in=270] (0.7, 0) to[out=90, in=180] (1, 0.15) to[out=120, in=270] (0.7, 1);
        \draw[thick, orange] (0, 0.15) -- (1, 0.15) -- (2, 0.15);
        \draw[thick, ->] (0,1) -- (0.9,1);
        \draw[thick, ->] (0.85,1) -- (1.2,1);
        \draw[thick] (1.2,1) -- (2,1);
        \draw[thick, ->] (0,-1) -- (0.9,-1);
        \draw[thick, ->] (0.85,-1) -- (1.2,-1);
        \draw[thick] (1.2,-1) -- (2,-1);
        \draw[thick, ->] (0,-1) -- (0,0);
        \draw[thick] (0,-1) -- (0,1);
        \draw[thick, ->] (2,-1) -- (2,0);
        \draw[thick] (2,-1) -- (2,1);
        \node[draw, circle, inner sep=0pt, minimum size=3pt, fill=white] at (1, 0.15) {};
    \end{tikzpicture}}
    \caption{A quasitriangulation of $T^2 \setminus D^2$}
    \label{fig:quasitriangulation}
\end{figure}
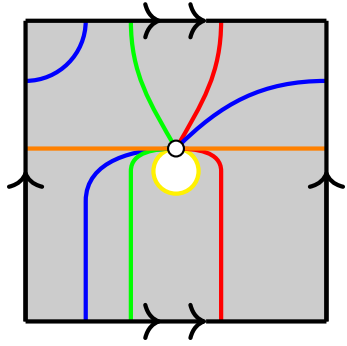
Note that as $T^2 \setminus D^2$ does not contain any punctures, $\mathcal{E}$ is a full triangulation of $T^2 \setminus D^2$. We will correspond our variables, $x_i$, to edges of $\mathcal{E}$ in the following way.
\begin{center}
    \begin{tikzpicture}
        \draw[draw=none, fill=gray!40] (0,1) -- (2,1) -- (2,-1) -- (0,-1) -- (1, -0.15) to[out=0, in=-90] (1.15, 0) to[out=90, in=0] (1, 0.15) to[out=180, in=90] (0.85, 0) to[out=-90, in=180] (1, -0.15) -- (0, -1) -- (0,1);
        \draw[thick, orange] (0, 0.15) -- (1, 0.15) -- (2, 0.15);
        \draw[thick, ->] (0,1) -- (0.9,1);
        \draw[thick, ->] (0.85,1) -- (1.2,1);
        \draw[thick] (1.2,1) -- (2,1);
        \draw[thick, ->] (0,-1) -- (0.9,-1);
        \draw[thick, ->] (0.85,-1) -- (1.2,-1);
        \draw[thick] (1.2,-1) -- (2,-1);
        \draw[thick, ->] (0,-1) -- (0,0);
        \draw[thick] (0,-1) -- (0,1);
        \draw[thick, ->] (2,-1) -- (2,0);
        \draw[thick] (2,-1) -- (2,1);
        \draw[thick, black] (1, 0) circle (0.15);
        \node[draw, circle, inner sep=0pt, minimum size=3pt, fill=white] at (1, 0.15) {};
        \node at (1,-1.5) {$x_1$};
        \draw[draw=none, fill=gray!40] (4,1) -- (6,1) -- (6,-1) -- (4,-1) -- (5, -0.15) to[out=0, in=-90] (5.15, 0) to[out=90, in=0] (5, 0.15) to[out=180, in=90] (4.85, 0) to[out=-90, in=180] (5, -0.15) -- (4, -1) -- (4,1);
        \draw[thick, green] (4.7, -1) to[out=90, in=270] (4.7, 0) to[out=90, in=180] (5, 0.15) to[out=120, in=270] (4.7, 1);
        \draw[thick, ->] (4,1) -- (4.9,1);
        \draw[thick, ->] (4.85,1) -- (5.2,1);
        \draw[thick] (5.2,1) -- (6,1);
        \draw[thick, ->] (4,-1) -- (4.9,-1);
        \draw[thick, ->] (4.85,-1) -- (5.2,-1);
        \draw[thick] (5.2,-1) -- (6,-1);
        \draw[thick, ->] (4,-1) -- (4,0);
        \draw[thick] (4,-1) -- (4,1);
        \draw[thick, ->] (6,-1) -- (6,0);
        \draw[thick] (6,-1) -- (6,1);
        \draw[thick, black] (5, 0) circle (0.15);
        \node[draw, circle, inner sep=0pt, minimum size=3pt, fill=white] at (5, 0.15) {};
        \node at (5,-1.5) {$x_2$};
        \draw[draw=none, fill=gray!40] (8,1) -- (10,1) -- (10,-1) -- (8,-1) -- (9, -0.15) to[out=0, in=-90] (9.15, 0) to[out=90, in=0] (9, 0.15) to[out=180, in=90] (8.85, 0) to[out=-90, in=180] (9, -0.15) -- (8, -1) -- (8,1);
        \draw[thick, blue] (8.4, -1) -- (8.4, -0.2) to[out=90, in=180] (9, 0.15) to[out=45, in=180] (10, 0.6);
        \draw[thick, blue] (8, 0.6) to[out=0, in=270] (8.4, 1);
        \draw[thick, ->] (8,1) -- (8.9,1);
        \draw[thick, ->] (8.85,1) -- (9.2,1);
        \draw[thick] (9.2,1) -- (10,1);
        \draw[thick, ->] (8,-1) -- (8.9,-1);
        \draw[thick, ->] (8.85,-1) -- (9.2,-1);
        \draw[thick] (9.2,-1) -- (10,-1);
        \draw[thick, ->] (8,-1) -- (8,0);
        \draw[thick] (8,-1) -- (8,1);
        \draw[thick, ->] (10,-1) -- (10,0);
        \draw[thick] (10,-1) -- (10,1);
        \draw[thick, black] (9, 0) circle (0.15);
        \node[draw, circle, inner sep=0pt, minimum size=3pt, fill=white] at (9, 0.15) {};
        \node at (9,-1.5) {$x_3$};
    \end{tikzpicture}
\end{center}
\begin{center}
    \begin{tikzpicture}
        \draw[draw=none, fill=gray!40] (0,1) -- (2,1) -- (2,-1) -- (0,-1) -- (1, -0.15) to[out=0, in=-90] (1.15, 0) to[out=90, in=0] (1, 0.15) to[out=180, in=90] (0.85, 0) to[out=-90, in=180] (1, -0.15) -- (0, -1) -- (0,1);
        \draw[thick, red] (1.3, -1) to[out=90, in=270] (1.3, 0) to[out=90, in=0] (1, 0.15) to[out=60, in=270] (1.3, 1);
        \draw[thick, ->] (0,1) -- (0.9,1);
        \draw[thick, ->] (0.85,1) -- (1.2,1);
        \draw[thick] (1.2,1) -- (2,1);
        \draw[thick, ->] (0,-1) -- (0.9,-1);
        \draw[thick, ->] (0.85,-1) -- (1.2,-1);
        \draw[thick] (1.2,-1) -- (2,-1);
        \draw[thick, ->] (0,-1) -- (0,0);
        \draw[thick] (0,-1) -- (0,1);
        \draw[thick, ->] (2,-1) -- (2,0);
        \draw[thick] (2,-1) -- (2,1);
        \draw[thick, black] (1, 0) circle (0.15);
        \node[draw, circle, inner sep=0pt, minimum size=3pt, fill=white] at (1, 0.15) {};
        \node at (1,-1.5) {$x_4$};
        \draw[draw=none, fill=gray!40] (4,1) -- (6,1) -- (6,-1) -- (4,-1) -- (5, -0.15) to[out=0, in=-90] (5.15, 0) to[out=90, in=0] (5, 0.15) to[out=180, in=90] (4.85, 0) to[out=-90, in=180] (5, -0.15) -- (4, -1) -- (4,1);
        \draw[thick, ->] (4,1) -- (4.9,1);
        \draw[thick, ->] (4.85,1) -- (5.2,1);
        \draw[thick] (5.2,1) -- (6,1);
        \draw[thick, ->] (4,-1) -- (4.9,-1);
        \draw[thick, ->] (4.85,-1) -- (5.2,-1);
        \draw[thick] (5.2,-1) -- (6,-1);
        \draw[thick, ->] (4,-1) -- (4,0);
        \draw[thick] (4,-1) -- (4,1);
        \draw[thick, ->] (6,-1) -- (6,0);
        \draw[thick] (6,-1) -- (6,1);
        \draw[thick, yellow] (5, 0) circle (0.15);
        \node[draw, circle, inner sep=0pt, minimum size=3pt, fill=white] at (5, 0.15) {};
        \node at (5,-1.5) {$x_5$};
        \draw[draw=none, fill=gray!40] (8,1) -- (10,1) -- (10,-1) -- (8,-1) -- (9, -0.15) to[out=0, in=-90] (9.15, 0) to[out=90, in=0] (9, 0.15) to[out=180, in=90] (8.85, 0) to[out=-90, in=180] (9, -0.15) -- (8, -1) -- (8,1);
        \draw[thick, ->] (8,1) -- (8.9,1);
        \draw[thick, ->] (8.85,1) -- (9.2,1);
        \draw[thick] (9.2,1) -- (10,1);
        \draw[thick, ->] (8,-1) -- (8.9,-1);
        \draw[thick, ->] (8.85,-1) -- (9.2,-1);
        \draw[thick] (9.2,-1) -- (10,-1);
        \draw[thick, ->] (8,-1) -- (8,0);
        \draw[thick] (8,-1) -- (8,1);
        \draw[thick, ->] (10,-1) -- (10,0);
        \draw[thick] (10,-1) -- (10,1);
        \draw[thick, yellow] (9, 0) circle (0.15);
        \node[draw, circle, inner sep=0pt, minimum size=3pt, fill=white] at (9, 0.15) {};
        \node at (9,-1.5) {$x_6$};
    \end{tikzpicture}
\end{center}
Here $x_5$ corresponds to the single edge in $\mathcal{E}_\partial$ and $x_6$ corresponds to our extra copy of $x_5$ in $\hat{\mathcal{E}}_\partial$. Using the anti-symmetric function, we find our antisymmetric matrix to be
$$Q =
{\begin{pmatrix}
    0 & 2 & 2 & -2 & 0 & -4\\
    -2 & 0 & -2 & -4 & 0 & -4\\
    -2 & 2 & 0 & -2 & 0 & -4\\
    2 & 4 & 2 & 0 & 0 & -4\\
    0 & 0 & 0 & 0 & 0 & 0\\
    4 & 4 & 4 & 4 & 0 & 0\\
\end{pmatrix}}.$$

Therefore, we are embedding our stated skein algebra, $\mathscr{S}\left( T^2 \setminus D^2 \right)$, into a quantum $6$-torus. This quantum torus is not simple as it has nontrivial center, however, we still need to consider the entire quantum $6$-torus as $\mathscr{S}(T^2 \setminus D^2)$ has a GK-dimension of $6$.

\begin{prop}\label{prop:embedding}
    Suppose $\varphi_{\mathcal{E}} : \mathscr{S}(T^2 \setminus D^2) \hookrightarrow \mathbb{T}^{6}(Q)$ is the algebra homomorphism defined as above. Let $y_1, y_2, y_3 \in \mathscr{S}(T^2 \setminus D^2)$ be the meridian, longitude, and $(1,1)$-curve respectively and $\partial =$
    $\begin{tikzpicture}[scale=0.6, baseline=-3]
        \MarkedTorusBackground
        \draw[thick] (1, 0) circle (0.4);
        \node[draw, circle, inner sep=0pt, minimum size=2pt, fill=white] at (1, 0.15) {};
    \end{tikzpicture}$ the boundary curve. Then
    \begin{align*}
        y_1 &\mapsto q^{-1}x_2x_3^{-1} + q^{-1}x_2^{-1}x_3 + qx_1^{2}x_3^{-1}x_4^{-1} + q^{2}x_1x_2^{-1}x_4^{-1}x_5\\
        y_2 &\mapsto qx_1x_3^{-1} + qx_1^{-1}x_3 + q^{-1}x_1^{-1}x_2x_3^{-1}x_4\\
        y_3 &\mapsto q^{-1}x_1x_4^{-1} + q^{-1}x_1^{-1}x_4 + q^{-1}x_1^{-1}x_2^{-1}x_3^{2} + x_2^{-1}x_3x_4^{-1}x_5 \\
        \partial &\mapsto q^{-2}x_2^{-1}x_4 + q^{-2}x_2x_4^{-1} + qx_1^{-1}x_3x_4^{-1}x_5 + qx_1x_3^{-1}x_4^{-1}x_5 + q^{-3}x_1^{-1}x_3^{-1}x_4x_5 \\
        &\phantom{\mapsto} + q^{3}x_1x_2^{-1}x_3^{-1}x_5 + q^{-1}x_1^{-1}x_2x_3^{-1}x_5 + q^{-1}x_1^{-1}x_2^{-1}x_3x_5 + q^{2}x_2^{-1}x_4^{-1}x_5^{2}
    \end{align*}
    In particular, the images of $y_i$ satisfy the commutation relations
    $$\left(q^2 - q^{-2}\right)^{-1}[\varphi_{\mathcal{E}}(y_i), \varphi_{\mathcal{E}}(y_{i+1})]_q = \varphi_{\mathcal{E}}(y_{i+2})$$
    for $i\mod 3$ and
    $$\varphi_{\mathcal{E}}(\partial) = q \varphi_{\mathcal{E}}(y_1) \varphi_{\mathcal{E}}(y_2) \varphi_{\mathcal{E}}(y_3) - q^2 \varphi_{\mathcal{E}}(y_1)^2 - q^{-2} \varphi_{\mathcal{E}}(y_2)^2 - q^2 \varphi_{\mathcal{E}}(y_3)^2 + q^2 + q^{-2}.$$
\end{prop}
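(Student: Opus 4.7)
The plan is to compute each image $\varphi_\mathcal{E}(y_i)$ directly using the ``multiply-and-resolve'' trick described just before the proposition, and then to deduce the formula for $\varphi_\mathcal{E}(\partial)$ and the three commutation relations algebraically. The main tool is the identity $\varphi_\mathcal{E} \circ \psi_\mathcal{E} = \operatorname{id}_{\mathbb{T}_+^6(Q)}$, which for any $\alpha \in \mathscr{S}(T^2\setminus D^2)$ gives
\[
\varphi_\mathcal{E}(\alpha) \;=\; m^{-1}\, \varphi_\mathcal{E}\!\bigl(\psi_\mathcal{E}(m)\,\alpha\bigr)
\]
whenever $m \in \mathbb{T}_+^6(Q)$ is chosen so that $\psi_\mathcal{E}(m)\,\alpha$ lies in $\psi_\mathcal{E}(\mathbb{T}_+^6(Q))$.

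For each $i \in \{1,2,3\}$, I would choose a monomial $m_i$ in the variables $x_1,\ldots,x_6$ such that $\psi_\mathcal{E}(m_i)\,y_i$ can be fully resolved --- via the skein relation $(R_1)$, the trivial arc relations $(R_3)$--$(R_4)$, and the state-exchange relation $(R_5)$ --- into a linear combination of diagrams whose components are exactly the edges of $\overline{\mathcal{E}}$ with the states prescribed by the definition of $\psi_\mathcal{E}$. A natural choice is to stack $y_i$ above a product of the interior edges $x_1,x_2,x_3,x_4$ that cross $y_i$ transversally; the crossings then lie inside the two triangles of the quasitriangulation and resolve to arcs isotopic to edges of $\mathcal{E}$, while boundary-parallel pieces are absorbed into the variable $x_5$. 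Reading off each resolved summand as a monomial in the $x_j$'s and dividing by $m_i$ on the left in $\mathbb{T}^6(Q)$, using the commutation rules encoded in $Q$, produces the claimed four-term expressions for $\varphi_\mathcal{E}(y_1)$ and $\varphi_\mathcal{E}(y_3)$ and the three-term expression for $\varphi_\mathcal{E}(y_2)$, together with the precise $q$-power coefficients accumulated from the crossing resolutions and the $q^{\pm 1/2}$ rescalings built into $\psi_\mathcal{E}$.

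With these three formulas in hand, the identity
\[
\partial \;=\; q\,Y_1 Y_2 Y_3 - q^2 Y_1^2 - q^{-2} Y_2^2 - q^2 Y_3^2 + q^2 + q^{-2}
\]
derived in Section~\ref{section:Generators} yields the final displayed equation of the proposition simply by applying the algebra homomorphism $\varphi_\mathcal{E}$ to both sides. The nine-term Laurent expression for $\varphi_\mathcal{E}(\partial)$ then follows by expanding the right-hand side in $\mathbb{T}^6(Q)$ and collecting terms via the $q$-commutation rules; this is a purely mechanical computation that also serves as a consistency check on the $y_i$ formulas. Similarly, the three commutation relations $(q^2-q^{-2})^{-1}[\varphi_\mathcal{E}(y_i),\varphi_\mathcal{E}(y_{i+1})]_q = \varphi_\mathcal{E}(y_{i+2})$ are immediate from Theorem~\ref{theorem:bp} together with the fact that $\varphi_\mathcal{E}$ is an algebra homomorphism, since these relations already hold between the closed-curve generators $Y_1,Y_2,Y_3$ in $\mathscr{S}(T^2\setminus D^2)$.

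The main obstacle is the first step: the explicit diagrammatic resolution of $\psi_\mathcal{E}(m_i)\,y_i$ for each $i$. The quasitriangulation of Figure~\ref{fig:quasitriangulation} has no symmetry permuting the three curves (the edges $x_5,x_6$ are parallel to the boundary, breaking any potential $\mathbb{Z}/3$), so each of the three cases must be carried out independently. The bookkeeping of the half-integer powers of $q$ arising from both the rescalings in $\psi_\mathcal{E}$ and each application of $(R_1)$ and $(R_5)$ is the most error-prone part of the argument, and I would expect the three resolutions, together with the algebraic expansion verifying the formula for $\varphi_\mathcal{E}(\partial)$, to constitute the bulk of the proof and a natural candidate for an appendix of diagrammatic calculations.
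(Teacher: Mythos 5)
Your proposal follows essentially the same route as the paper: compute each $\varphi_{\mathcal{E}}(y_i)$ by resolving $\psi_{\mathcal{E}}(m_i)\,y_i$ diagrammatically for a suitable monomial $m_i$ (the paper uses $m_1 = x_2x_4x_3$, $m_2 = x_1x_3$, $m_3 = x_4x_2x_1$) and then left-dividing in $\mathbb{T}^6(Q)$, with the commutation relations following immediately from the fact that $\varphi_{\mathcal{E}}$ is an algebra embedding and the relations already hold among $Y_1,Y_2,Y_3$ in the skein algebra. The one minor divergence is that the paper obtains the nine-term formula for $\varphi_{\mathcal{E}}(\partial)$ by directly resolving $\psi_{\mathcal{E}}(x_1)\cdot\partial$ (a short diagrammatic computation, since only one pair of crossings appears), whereas you propose expanding $q\,\varphi_{\mathcal{E}}(y_1)\varphi_{\mathcal{E}}(y_2)\varphi_{\mathcal{E}}(y_3)-\cdots$ in the quantum torus; both are valid and rest on the same skein-level identity for $\partial$.
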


\begin{proof}
    To track the relative heights, we use the notation described at the beginning of Appendix \ref{appendix:DiagCalc} for the following computations.
\begin{align*}
    \psi_{\mathcal{E}} (x_2 x_4 x_3) \cdot y_1 &= \left( q^{-3/2}
    \begin{tikzpicture}[baseline=-1]
        \MarkedTorusBackground[2][1]
        \draw[thick] (0.7, -1) to[out=90, in=270] (0.7, 0) to[out=90, in=180] (1, 0.15) to[out=120, in=270] (0.7, 1);
        \node[draw, circle, inner sep=0pt, minimum size=3pt, fill=white] at (1, 0.15) {};
    \end{tikzpicture}
    \begin{tikzpicture}[baseline=-1]
        \MarkedTorusBackground[2][1]
        \draw[thick] (1.3, -1) to[out=90, in=270] (1.3, 0) to[out=90, in=0] (1, 0.15) to[out=60, in=270] (1.3, 1);
        \node[draw, circle, inner sep=0pt, minimum size=3pt, fill=white] at (1, 0.15) {};
    \end{tikzpicture}
    \begin{tikzpicture}[baseline=-1]
        \MarkedTorusBackground[2][1]
        \draw[thick] (0.4, -1) -- (0.4, -0.2) to[out=90, in=180] (1, 0.15) to[out=45, in=180] (2, 0.5);
        \draw[thick] (0, 0.5) to[out=0, in=270] (0.4, 1);
        \node[draw, circle, inner sep=0pt, minimum size=3pt, fill=white] at (1, 0.15) {};
    \end{tikzpicture}
    \right)
    \begin{tikzpicture}[baseline=-1]
        \MarkedTorusBackground
        \draw[thick] (0, 0.7) -- (2, 0.7);
        \node[draw, circle, inner sep=0pt, minimum size=3pt, fill=white] at (1, 0.15) {};
    \end{tikzpicture} \\
    &= \left( q^{-3/2}
    \begin{tikzpicture}[baseline=-1]
        \MarkedTorusBackground[2][1]
        \draw[thick] (0.7, -1) to[out=90, in=270] (0.7, 0) to[out=90, in=180] (1, 0.15) to[out=120, in=270] (0.7, 1);
        \node[draw, circle, inner sep=0pt, minimum size=3pt, fill=white] at (1, 0.15) {};
    \end{tikzpicture}
    \begin{tikzpicture}[baseline=-1]
        \MarkedTorusBackground[2][1]
        \draw[thick] (1.3, -1) to[out=90, in=270] (1.3, 0) to[out=90, in=0] (1, 0.15) to[out=60, in=270] (1.3, 1);
        \node[draw, circle, inner sep=0pt, minimum size=3pt, fill=white] at (1, 0.15) {};
    \end{tikzpicture}
    \right)
    \begin{tikzpicture}[baseline=-1]
        \MarkedTorusBackground[2][1]
        \draw[thick] (0, 0.7) -- (2, 0.7);
        \draw[thick] (0.4, -1) -- (0.4, -0.2) to[out=90, in=180] (1, 0.15) to[out=45, in=180] (2, 0.5);
        \draw[line width=3mm, gray!40] (0.1, 0.5) to[out=0, in=270] (0.35, 0.9);
        \draw[thick] (0, 0.5) to[out=0, in=270] (0.4, 1);
        \node[draw, circle, inner sep=0pt, minimum size=3pt, fill=white] at (1, 0.15) {};
    \end{tikzpicture} \\
    &= \left( q^{-3/2}
    \begin{tikzpicture}[baseline=-1]
        \MarkedTorusBackground[2][1]
        \draw[thick] (0.7, -1) to[out=90, in=270] (0.7, 0) to[out=90, in=180] (1, 0.15) to[out=120, in=270] (0.7, 1);
        \node[draw, circle, inner sep=0pt, minimum size=3pt, fill=white] at (1, 0.15) {};
    \end{tikzpicture}
    \begin{tikzpicture}[baseline=-1]
        \MarkedTorusBackground[2][1]
        \draw[thick] (1.3, -1) to[out=90, in=270] (1.3, 0) to[out=90, in=0] (1, 0.15) to[out=60, in=270] (1.3, 1);
        \node[draw, circle, inner sep=0pt, minimum size=3pt, fill=white] at (1, 0.15) {};
    \end{tikzpicture}
    \right)\left(q 
    \begin{tikzpicture}[baseline=-1]
        \MarkedTorusBackground[2][1]
        \draw[thick] (0.7, -1) to[out=90, in=270] (0.7, 0) to[out=90, in=180] (1, 0.15) to[out=120, in=270] (0.7, 1);
        \node[draw, circle, inner sep=0pt, minimum size=3pt, fill=white] at (1, 0.15) {};
    \end{tikzpicture}
    + q^{-1}
    \begin{tikzpicture}[baseline=-1]
        \MarkedTorusBackground[2][1]
        \draw[thick] (0, 0.7) to[out=0, in=270] (0.4, 1);
        \draw[thick] (0.4, -1) -- (0.4, -0.2) to[out=90, in=180] (1, 0.15) to[out=45, in=180] (2, 0.5);
        \draw[thick] (0, 0.5) to[out=0, in=180] (2, 0.7);
        \node[draw, circle, inner sep=0pt, minimum size=3pt, fill=white] at (1, 0.15) {};
    \end{tikzpicture}
    \right) \\
    &= \left( q^{-3/2}
    \begin{tikzpicture}[baseline=-1]
        \MarkedTorusBackground[2][1]
        \draw[thick] (0.7, -1) to[out=90, in=270] (0.7, 0) to[out=90, in=180] (1, 0.15) to[out=120, in=270] (0.7, 1);
        \node[draw, circle, inner sep=0pt, minimum size=3pt, fill=white] at (1, 0.15) {};
    \end{tikzpicture}
    \right)\left(q
    \begin{tikzpicture}[baseline=-1]
        \MarkedTorusBackground[4][3][][][2][1]
        \draw[thick] (1.3, -1) to[out=90, in=270] (1.3, 0) to[out=90, in=0] (1, 0.15) to[out=60, in=270] (1.3, 1);
        \draw[thick] (0.7, -1) to[out=90, in=270] (0.7, 0) to[out=90, in=180] (1, 0.15) to[out=120, in=270] (0.7, 1);
        \node[draw, circle, inner sep=0pt, minimum size=3pt, fill=white] at (1, 0.15) {};
    \end{tikzpicture}
    + q^{-1}
    \begin{tikzpicture}[baseline=-1]
        \MarkedTorusBackground[4][2][][][3][1]
        \draw[thick] (0, 0.5) to[out=0, in=180] (2, 0.7);
        \draw[line width=3mm, gray!40] (1.3, -0.9) to[out=90, in=270] (1.3, 0) to[out=90, in=0] (1, 0.15) to[out=60, in=270] (1.3, 0.9);
        \draw[thick] (1.3, -1) to[out=90, in=270] (1.3, 0) to[out=90, in=0] (1, 0.15) to[out=60, in=270] (1.3, 1);
        \draw[thick] (0, 0.7) to[out=0, in=270] (0.4, 1);
        \draw[thick] (0.4, -1) -- (0.4, -0.2) to[out=90, in=180] (1, 0.15) to[out=45, in=180] (2, 0.5);
        \draw[thick, black, fill=white] (1, 0) circle (0.15);
        \node[draw, circle, inner sep=0pt, minimum size=3pt, fill=white] at (1, 0.15) {};
    \end{tikzpicture}
    \right) \\
    &= \left( q^{-3/2}
    \begin{tikzpicture}[baseline=-1]
        \MarkedTorusBackground[2][1]
        \draw[thick] (0.7, -1) to[out=90, in=270] (0.7, 0) to[out=90, in=180] (1, 0.15) to[out=120, in=270] (0.7, 1);
        \node[draw, circle, inner sep=0pt, minimum size=3pt, fill=white] at (1, 0.15) {};
    \end{tikzpicture}
    \right)\left( q
    \begin{tikzpicture}[baseline=-1]
        \MarkedTorusBackground[4][3][][][2][1]
        \draw[thick] (1.3, -1) to[out=90, in=270] (1.3, 0) to[out=90, in=0] (1, 0.15) to[out=60, in=270] (1.3, 1);
        \draw[thick] (0.7, -1) to[out=90, in=270] (0.7, 0) to[out=90, in=180] (1, 0.15) to[out=120, in=270] (0.7, 1);
        \node[draw, circle, inner sep=0pt, minimum size=3pt, fill=white] at (1, 0.15) {};
    \end{tikzpicture}
    +
    \begin{tikzpicture}[baseline=-1]
        \MarkedTorusBackground[4][2][][][3][1]
        \draw[thick] (0, 0.15) -- (1, 0.15) -- (2, 0.15);
        \draw[thick] (0, 0.5) to[out=0, in=120] (1, 0.15) to[out=60, in=180] (2, 0.5);
        \node[draw, circle, inner sep=0pt, minimum size=3pt, fill=white] at (1, 0.15) {};
    \end{tikzpicture}
    + q^{-2}
    \begin{tikzpicture}[baseline=-1]
        \MarkedTorusBackground[4][2][][][3][1]
        \draw[thick] (0, 0.5) to[out=0, in=270] (0.4, 1);
        \draw[thick] (0.4, -1) to[out=90, in=270] (1.4, -0.1) to[out=90, in=0] (1, 0.15) to[out=45, in=180] (2, 0.5);
        \draw[thick] (0, -1) to[out=45, in=180] (1, 0.15) to[out=60, in=225] (2, 1);
        \node[draw, circle, inner sep=0pt, minimum size=3pt, fill=white] at (1, 0.15) {};
    \end{tikzpicture}
    \right) \\
    &= q^{-3/2}
    \left( q
    \begin{tikzpicture}[baseline=-1]
        \MarkedTorusBackground[6][2][1][5][4][3]
        \draw[thick] (1.3, -1) to[out=90, in=270] (1.3, 0) to[out=90, in=0] (1, 0.15) to[out=60, in=270] (1.3, 1);
        \draw[thick] (0.7, -1) to[out=90, in=270] (0.7, 0) to[out=90, in=180] (1, 0.15) to[out=120, in=270] (0.7, 1);
        \draw[thick] (0.4, -1) to[out=90, in=270] (0.4, 0) to[out=90, in=180] (1, 0.15) to[out=120, in=270] (0.4, 1);
        \node[draw, circle, inner sep=0pt, minimum size=3pt, fill=white] at (1, 0.15) {};
    \end{tikzpicture}
    +
    \begin{tikzpicture}[baseline=-1]
        \MarkedTorusBackground[2][6][4][1][5][3]
        \draw[thick] (0, 0.15) -- (1, 0.15) -- (2, 0.15);
        \draw[thick] (0, 0.5) to[out=0, in=120] (1, 0.15) to[out=60, in=180] (2, 0.5);
        \draw[thick] (0.7, -1) to[out=90, in=270] (0.7, 0) to[out=90, in=180] (1, 0.15) to[out=120, in=270] (0.7, 1);
        \node[draw, circle, inner sep=0pt, minimum size=3pt, fill=white] at (1, 0.15) {};
    \end{tikzpicture}
    + q^{-2}
    \begin{tikzpicture}[baseline=-1]
        \MarkedTorusBackground[2][6][1][4][5][3]
        \draw[thick] (0, 0.5) to[out=0, in=270] (0.4, 1);
        \draw[thick] (0.4, -1) to[out=90, in=270] (1.4, -0.1) to[out=90, in=0] (1, 0.15) to[out=45, in=180] (2, 0.5);
        \draw[thick] (0, -1) to[out=45, in=180] (1, 0.15) to[out=60, in=225] (2, 1);
        \draw[line width=3mm, gray!40] (0.7, -0.8) -- (0.7, -0.5);
        \draw[thick] (0.7, -1) to[out=90, in=270] (0.7, -0.3) to[out=90, in=180] (1, 0.15) to[out=120, in=270] (0.7, 1);
        \draw[thick, black, fill=white] (1, 0) circle (0.15);
        \node[draw, circle, inner sep=0pt, minimum size=3pt, fill=white] at (1, 0.15) {};
    \end{tikzpicture}
    \right) \\
    &= q^{-3/2}
    \left( q
    \begin{tikzpicture}[baseline=-1]
        \MarkedTorusBackground[6][2][1][5][4][3]
        \draw[thick] (1.3, -1) to[out=90, in=270] (1.3, 0) to[out=90, in=0] (1, 0.15) to[out=60, in=270] (1.3, 1);
        \draw[thick] (0.7, -1) to[out=90, in=270] (0.7, 0) to[out=90, in=180] (1, 0.15) to[out=120, in=270] (0.7, 1);
        \draw[thick] (0.4, -1) to[out=90, in=270] (0.4, 0) to[out=90, in=180] (1, 0.15) to[out=120, in=270] (0.4, 1);
        \node[draw, circle, inner sep=0pt, minimum size=3pt, fill=white] at (1, 0.15) {};
    \end{tikzpicture}
    +
    \begin{tikzpicture}[baseline=-1]
        \MarkedTorusBackground[2][6][4][1][5][3]
        \draw[thick] (0, 0.15) -- (1, 0.15) -- (2, 0.15);
        \draw[thick] (0, 0.5) to[out=0, in=120] (1, 0.15) to[out=60, in=180] (2, 0.5);
        \draw[thick] (0.7, -1) to[out=90, in=270] (0.7, 0) to[out=90, in=180] (1, 0.15) to[out=120, in=270] (0.7, 1);
        \node[draw, circle, inner sep=0pt, minimum size=3pt, fill=white] at (1, 0.15) {};
    \end{tikzpicture}
    + q^{-1}
    \begin{tikzpicture}[baseline=-1]
        \MarkedTorusBackground[2][6][1][4][5][3]
        \draw[thick] (0, 0.15) -- (1, 0.15) -- (2, 0.15);
        \draw[thick] (0.4, -1) -- (0.4, -0.2) to[out=90, in=180] (1, 0.15) to[out=45, in=180] (2, 0.7);
        \draw[thick] (0, 0.7) to[out=0, in=270] (0.4, 1);
        \draw[thick] (1, 0.15) to[out=0, in=90] (1.3, -0.2) to[out=270, in=0] (1, -0.5) to[out=180, in=270] (0.7, -0.2) to[out=90, in=180] (1, 0.15);
        \node[draw, circle, inner sep=0pt, minimum size=3pt, fill=white] at (1, 0.15) {};
    \end{tikzpicture}
    +q^{-3}
    \begin{tikzpicture}[baseline=-1]
        \MarkedTorusBackground[2][6][1][4][5][3]
        \draw[thick] (1.3, -1) to[out=90, in=270] (1.3, 0) to[out=90, in=0] (1, 0.15) to[out=60, in=270] (1.3, 1);
        \draw[thick] (0, -1) to[out=45, in=180] (1, 0.15) to[out=60, in=225] (2, 1);
        \draw[thick] (0.7, -1) -- (0.7, -0.2) to[out=90, in=180] (1, 0.15) to[out=45, in=180] (2, 0.5);
        \draw[thick] (0, 0.5) to[out=0, in=270] (0.7, 1);
        \node[draw, circle, inner sep=0pt, minimum size=3pt, fill=white] at (1, 0.15) {};
    \end{tikzpicture}
    \right) \\
    &= q^{-3/2}
    \left( q
    \begin{tikzpicture}[baseline=-1]
        \MarkedTorusBackground[6][2][1][5][4][3]
        \draw[thick] (1.3, -1) to[out=90, in=270] (1.3, 0) to[out=90, in=0] (1, 0.15) to[out=60, in=270] (1.3, 1);
        \draw[thick] (0.7, -1) to[out=90, in=270] (0.7, 0) to[out=90, in=180] (1, 0.15) to[out=120, in=270] (0.7, 1);
        \draw[thick] (0.4, -1) to[out=90, in=270] (0.4, 0) to[out=90, in=180] (1, 0.15) to[out=120, in=270] (0.4, 1);
        \node[draw, circle, inner sep=0pt, minimum size=3pt, fill=white] at (1, 0.15) {};
    \end{tikzpicture}
    + q
    \begin{tikzpicture}[baseline=-1]
        \MarkedTorusBackground[2][6][4][1][3][5]
        \draw[thick] (0, 0.15) -- (1, 0.15) -- (2, 0.15);
        \draw[thick] (0, 0.5) to[out=0, in=120] (1, 0.15) to[out=60, in=180] (2, 0.5);
        \draw[thick] (0.7, -1) to[out=90, in=270] (0.7, 0) to[out=90, in=180] (1, 0.15) to[out=120, in=270] (0.7, 1);
        \node[draw, circle, inner sep=0pt, minimum size=3pt, fill=white] at (1, 0.15) {};
    \end{tikzpicture}
    + q^{-4}
    \begin{tikzpicture}[baseline=-1]
        \MarkedTorusBackground[6][4][2][3][1][5]
        \draw[thick] (0, 0.15) -- (1, 0.15) -- (2, 0.15);
        \draw[thick] (0.4, -1) -- (0.4, -0.2) to[out=90, in=180] (1, 0.15) to[out=45, in=180] (2, 0.7);
        \draw[thick] (0, 0.7) to[out=0, in=270] (0.4, 1);
        \draw[thick] (1, 0.15) to[out=0, in=90] (1.3, -0.2) to[out=270, in=0] (1, -0.5) to[out=180, in=270] (0.7, -0.2) to[out=90, in=180] (1, 0.15);
        \node[draw, circle, inner sep=0pt, minimum size=3pt, fill=white] at (1, 0.15) {};
    \end{tikzpicture}
    +q^{-3}
    \begin{tikzpicture}[baseline=-1]
        \MarkedTorusBackground[4][2][6][1][3][5]
        \draw[thick] (1.3, -1) to[out=90, in=270] (1.3, 0) to[out=90, in=0] (1, 0.15) to[out=60, in=270] (1.3, 1);
        \draw[thick] (0, -1) to[out=45, in=180] (1, 0.15) to[out=60, in=225] (2, 1);
        \draw[thick] (0.7, -1) -- (0.7, -0.2) to[out=90, in=180] (1, 0.15) to[out=45, in=180] (2, 0.5);
        \draw[thick] (0, 0.5) to[out=0, in=270] (0.7, 1);
        \node[draw, circle, inner sep=0pt, minimum size=3pt, fill=white] at (1, 0.15) {};
    \end{tikzpicture}
    \right)
\end{align*}
where the last equality comes from the height exchange relation, $(R_6)$.
Thus,
$$\varphi_{\mathcal{E}}(y_1)
= (x_2 x_4 x_3)^{-1} (x_2 x_4 x_3) \varphi_{\mathcal{E}}(y_1)
= (x_2 x_4 x_3)^{-1} \varphi_{\mathcal{E}}(\psi_{\mathcal{E}}(x_2 x_4 x_3)y_1)$$
$$= qx_3^{-1}x_4^{-1}x_2^{-1}x_2x_4x_2 + qx_3^{-1}x_4^{-1}x_2^{-1} x_2x_1^{2} + q^{-4}x_3^{-1}x_4^{-1}x_2^{-1} x_1x_3x_5 + q^{-3}x_3^{-1}x_4^{-1}x_2^{-1} x_3^{2}x_4$$
$$= q^{-1}x_2x_3^{-1} + q^{-1}x_2^{-1}x_3 + qx_1^{2}x_3^{-1}x_4^{-1} + q^{2}x_1x_2^{-1}x_4^{-1}x_5.$$

\begin{align*}
    \psi_{\mathcal{E}}\left( x_1 x_3 \right) y_2 &= q^{-1}
    \begin{tikzpicture}[baseline=-1]
            \MarkedTorusBackground[2][1]
            \draw[thick] (0, 0.15) -- (1, 0.15) -- (2, 0.15);
            \node[draw, circle, inner sep=0pt, minimum size=3pt, fill=white] at (1, 0.15) {};
        \end{tikzpicture}
        \begin{tikzpicture}[baseline=-1]
            \MarkedTorusBackground[2][1]
            \draw[thick] (0.5, -1) -- (0.5, -0.2) to[out=90, in=180] (1, 0.15) to[out=45, in=180] (2, 0.5);
            \draw[thick] (0, 0.5) to[out=0, in=270] (0.5, 1);
            \node[draw, circle, inner sep=0pt, minimum size=3pt, fill=white] at (1, 0.15) {};
        \end{tikzpicture}
        \begin{tikzpicture}[baseline=-1]
            \MarkedTorusBackground
            \draw[thick] (0.3, -1) -- (0.3, 1);
            \node[draw, circle, inner sep=0pt, minimum size=3pt, fill=white] at (1, 0.15) {};
        \end{tikzpicture}\\
        &= q^{-1}
        \begin{tikzpicture}[baseline=-1]
            \MarkedTorusBackground[2][1]
            \draw[thick] (0, 0.15) -- (1, 0.15) -- (2, 0.15);
            \node[draw, circle, inner sep=0pt, minimum size=3pt, fill=white] at (1, 0.15) {};
        \end{tikzpicture}
        \begin{tikzpicture}[baseline=-1]
            \MarkedTorusBackground[2][1]
            \draw[thick] (0.3, -1) -- (0.3, 1);
            \draw[thick] (0.5, -1) -- (0.5, -0.2) to[out=90, in=180] (1, 0.15) to[out=45, in=180] (2, 0.5);
            \draw[line width=3mm, gray!40] (0.05, 0.5) to[out=0, in=270] (0.5, 0.95);
            \draw[thick] (0, 0.5) to[out=0, in=270] (0.5, 1);
            \node[draw, circle, inner sep=0pt, minimum size=3pt, fill=white] at (1, 0.15) {};
        \end{tikzpicture}\\
        &= q^{-1}
        \begin{tikzpicture}[baseline=-1]
            \MarkedTorusBackground[2][1]
            \draw[thick] (0, 0.15) -- (1, 0.15) -- (2, 0.15);
            \node[draw, circle, inner sep=0pt, minimum size=3pt, fill=white] at (1, 0.15) {};
        \end{tikzpicture}
        \left( q
        \begin{tikzpicture}[baseline=-1]
            \MarkedTorusBackground[2][1]
            \draw[thick] (0.3, -1) -- (0.3, 0) to[out=90, in=270] (0.5, 1);
            \draw[thick] (0.5, -1) -- (0.5, -0.2) to[out=90, in=180] (1, 0.15) to[out=45, in=180] (2, 0.5);
            \draw[thick] (0, 0.5) to[out=0, in=270] (0.3, 1);
            \node[draw, circle, inner sep=0pt, minimum size=3pt, fill=white] at (1, 0.15) {};
        \end{tikzpicture}
        + q^{-1}
        \begin{tikzpicture}[baseline=-1]
            \MarkedTorusBackground[2][1]
            \draw[thick] (0, 0.15) -- (1, 0.15) -- (2, 0.15);
            \node[draw, circle, inner sep=0pt, minimum size=3pt, fill=white] at (1, 0.15) {};
        \end{tikzpicture}
        \right)\\
        &= q^{-1} \left( q
        \begin{tikzpicture}[baseline=-1]
            \MarkedTorusBackground[4][2][][][3][1]
            \draw[thick] (0.3, -1) -- (0.3, 0) to[out=90, in=270] (0.5, 1);
            \draw[thick] (0.5, -1) -- (0.5, -0.2) to[out=90, in=180] (1, 0.15) to[out=45, in=180] (2, 0.5);
            \draw[thick] (0, 0.5) to[out=0, in=270] (0.3, 1);
            \draw[line width=3mm, gray!40] (0.2, 0.15) -- (0.5, 0.15);
            \draw[thick] (0, 0.15) -- (1, 0.15) -- (2, 0.15);
            \node[draw, circle, inner sep=0pt, minimum size=3pt, fill=white] at (1, 0.15) {};
        \end{tikzpicture}
        + q^{-1}
        \begin{tikzpicture}[baseline=-1]
            \MarkedTorusBackground[4][2][][][1][3]
            \draw[thick] (0, 0.15) -- (1, 0.15) -- (2, 0.15);
            \draw[thick] (0, 0.5) to[out=0, in=120] (1, 0.15) to[out=60, in=180] (2, 0.5);
            \node[draw, circle, inner sep=0pt, minimum size=3pt, fill=white] at (1, 0.15) {};
        \end{tikzpicture}
        \right)\\
        &= q^{-1} \left( q^2
        \begin{tikzpicture}[baseline=-1]
            \MarkedTorusBackground[4][2][][][3][1]
            \draw[thick] (0, -1) to[out=45, in=180] (1, 0.15) to[out=60, in=225] (2, 1);
            \draw[thick] (0.7, -1) -- (0.7, -0.2) to[out=90, in=180] (1, 0.15) to[out=45, in=180] (2, 0.5);
            \draw[thick] (0, 0.5) to[out=0, in=270] (0.7, 1);
            \node[draw, circle, inner sep=0pt, minimum size=3pt, fill=white] at (1, 0.15) {};
        \end{tikzpicture}
        +
        \begin{tikzpicture}[baseline=-1]
            \MarkedTorusBackground[4][2][][][3][1]
            \draw[thick] (0.7, -1) to[out=90, in=270] (0.7, 0) to[out=90, in=180] (1, 0.15) to[out=120, in=270] (0.7, 1);
            \draw[thick] (1.3, -1) to[out=90, in=270] (1.3, 0) to[out=90, in=0] (1, 0.15) to[out=60, in=270] (1.3, 1);
            \node[draw, circle, inner sep=0pt, minimum size=3pt, fill=white] at (1, 0.15) {};
        \end{tikzpicture}
        + q^{-1}
        \begin{tikzpicture}[baseline=-1]
            \MarkedTorusBackground[4][2][][][1][3]
            \draw[thick] (0, 0.15) -- (1, 0.15) -- (2, 0.15);
            \draw[thick] (0, 0.5) to[out=0, in=120] (1, 0.15) to[out=60, in=180] (2, 0.5);
            \node[draw, circle, inner sep=0pt, minimum size=3pt, fill=white] at (1, 0.15) {};
        \end{tikzpicture}
        \right)\\
        &= q^{-1} \left( q^3
        \begin{tikzpicture}[baseline=-1]
            \MarkedTorusBackground[2][4][][][3][1]
            \draw[thick] (0, -1) to[out=45, in=180] (1, 0.15) to[out=60, in=225] (2, 1);
            \draw[thick] (0.7, -1) -- (0.7, -0.2) to[out=90, in=180] (1, 0.15) to[out=45, in=180] (2, 0.5);
            \draw[thick] (0, 0.5) to[out=0, in=270] (0.7, 1);
            \node[draw, circle, inner sep=0pt, minimum size=3pt, fill=white] at (1, 0.15) {};
        \end{tikzpicture}
        + q^{-1}
        \begin{tikzpicture}[baseline=-1]
            \MarkedTorusBackground[4][3][][][2][1]
            \draw[thick] (0.7, -1) to[out=90, in=270] (0.7, 0) to[out=90, in=180] (1, 0.15) to[out=120, in=270] (0.7, 1);
            \draw[thick] (1.3, -1) to[out=90, in=270] (1.3, 0) to[out=90, in=0] (1, 0.15) to[out=60, in=270] (1.3, 1);
            \node[draw, circle, inner sep=0pt, minimum size=3pt, fill=white] at (1, 0.15) {};
        \end{tikzpicture}
        + q^{-1}
        \begin{tikzpicture}[baseline=-1]
            \MarkedTorusBackground[4][2][][][1][3]
            \draw[thick] (0, 0.15) -- (1, 0.15) -- (2, 0.15);
            \draw[thick] (0, 0.5) to[out=0, in=120] (1, 0.15) to[out=60, in=180] (2, 0.5);
            \node[draw, circle, inner sep=0pt, minimum size=3pt, fill=white] at (1, 0.15) {};
        \end{tikzpicture}
        \right)\\
        &\Rightarrow \varphi_{\mathcal{E}} \left( y_2 \right) = (x_1 x_3)^{-1} (x_1 x_3) \varphi_{\mathcal{E}} \left( y_2 \right) = (x_1 x_3)^{-1} \varphi_{\mathcal{E}} \left( \psi_{\mathcal{E}}(x_1 x_3) y_2 \right)\\
        &= q^{3}x_3^{-1}x_1^{-1} x_3^2 + q^{-1} x_3^{-1} x_1^{-1} x_4 x_2 + q^{-1} x_3^{-1} x_1^{-1} x_1^2\\
        &= qx_1x_3^{-1} + qx_1^{-1}x_3 + q^{-1}x_1^{-1}x_2x_3^{-1}x_4.
    \end{align*}
    
    \begin{align*}
        \psi_{\mathcal{E}}\left( x_4 x_2 x_1 \right) y_3 &= \left( q^{-3/2}
        \begin{tikzpicture}[baseline=-1]
            \MarkedTorusBackground[2][1]
            \draw[thick] (1.3, -1) to[out=90, in=270] (1.3, 0) to[out=90, in=0] (1, 0.15) to[out=60, in=270] (1.3, 1);
            \node[draw, circle, inner sep=0pt, minimum size=3pt, fill=white] at (1, 0.15) {};
        \end{tikzpicture}
        \begin{tikzpicture}[baseline=-1]
            \MarkedTorusBackground[2][1]
            \draw[thick] (0.7, -1) to[out=90, in=270] (0.7, 0) to[out=90, in=180] (1, 0.15) to[out=120, in=270] (0.7, 1);
            \node[draw, circle, inner sep=0pt, minimum size=3pt, fill=white] at (1, 0.15) {};
        \end{tikzpicture}
        \begin{tikzpicture}[baseline=-1]
            \MarkedTorusBackground[2][1]
            \draw[thick] (0, 0.15) -- (1, 0.15) -- (2, 0.15);
            \node[draw, circle, inner sep=0pt, minimum size=3pt, fill=white] at (1, 0.15) {};
        \end{tikzpicture}
        \right)
        \begin{tikzpicture}[baseline=-1]
            \MarkedTorusBackground
            \draw[thick] (0.5, -1) -- (0.5, -0.5) to[out=90, in=225] (0.7, 0.2) to[out=45, in=180] (1.5, 0.5) -- (2, 0.5);
            \draw[thick] (0, 0.5) to[out=0, in=270] (0.5, 1);
            \node[draw, circle, inner sep=0pt, minimum size=3pt, fill=white] at (1, 0.15) {};
        \end{tikzpicture}\\
        &= \left( q^{-3/2}
        \begin{tikzpicture}[baseline=-1]
            \MarkedTorusBackground[2][1]
            \draw[thick] (1.3, -1) to[out=90, in=270] (1.3, 0) to[out=90, in=0] (1, 0.15) to[out=60, in=270] (1.3, 1);
            \node[draw, circle, inner sep=0pt, minimum size=3pt, fill=white] at (1, 0.15) {};
        \end{tikzpicture}
        \begin{tikzpicture}[baseline=-1]
            \MarkedTorusBackground[2][1]
            \draw[thick] (0.7, -1) to[out=90, in=270] (0.7, 0) to[out=90, in=180] (1, 0.15) to[out=120, in=270] (0.7, 1);
            \node[draw, circle, inner sep=0pt, minimum size=3pt, fill=white] at (1, 0.15) {};
        \end{tikzpicture}
        \right)
        \begin{tikzpicture}[baseline=-1]
            \MarkedTorusBackground[2][1]
            \draw[thick] (0.5, -1) -- (0.5, -0.5) to[out=90, in=225] (0.7, 0.2) to[out=45, in=180] (1.5, 0.5) -- (2, 0.5);
            \draw[thick] (0, 0.5) to[out=0, in=270] (0.5, 1);
            \draw[line width=3mm, gray!40] (0.3, 0.15) -- (0.75, 0.15);
            \draw[thick] (0, 0.15) -- (1, 0.15) -- (2, 0.15);
            \node[draw, circle, inner sep=0pt, minimum size=3pt, fill=white] at (1, 0.15) {};
        \end{tikzpicture}\\
        &= \left( q^{3/2}
        \begin{tikzpicture}[baseline=-1]
            \MarkedTorusBackground[2][1]
            \draw[thick] (1.3, -1) to[out=90, in=270] (1.3, 0) to[out=90, in=0] (1, 0.15) to[out=60, in=270] (1.3, 1);
            \node[draw, circle, inner sep=0pt, minimum size=3pt, fill=white] at (1, 0.15) {};
        \end{tikzpicture}
        \begin{tikzpicture}[baseline=-1]
            \MarkedTorusBackground[2][1]
            \draw[thick] (0.7, -1) to[out=90, in=270] (0.7, 0) to[out=90, in=180] (1, 0.15) to[out=120, in=270] (0.7, 1);
            \node[draw, circle, inner sep=0pt, minimum size=3pt, fill=white] at (1, 0.15) {};
        \end{tikzpicture}
        \right) \left( q
        \begin{tikzpicture}[baseline=-1]
            \MarkedTorusBackground[2][1]
            \draw[thick] (0, 0.7) to[out=0, in=270] (0.4, 1);
            \draw[thick] (0.4, -1) -- (0.4, -0.2) to[out=90, in=180] (1, 0.15) to[out=45, in=180] (2, 0.5);
            \draw[thick] (0, 0.5) to[out=0, in=180] (2, 0.7);
            \node[draw, circle, inner sep=0pt, minimum size=3pt, fill=white] at (1, 0.15) {};
        \end{tikzpicture}
        + q^{-1}
        \begin{tikzpicture}[baseline=-1]
            \MarkedTorusBackground[2][1]
            \draw[thick] (1.3, -1) to[out=90, in=270] (1.3, 0) to[out=90, in=0] (1, 0.15) to[out=60, in=270] (1.3, 1);
            \node[draw, circle, inner sep=0pt, minimum size=3pt, fill=white] at (1, 0.15) {};
        \end{tikzpicture}
        \right)\\
        &= \left( q^{-3/2}
        \begin{tikzpicture}[baseline=-1]
            \MarkedTorusBackground[2][1]
            \draw[thick] (1.3, -1) to[out=90, in=270] (1.3, 0) to[out=90, in=0] (1, 0.15) to[out=60, in=270] (1.3, 1);
            \node[draw, circle, inner sep=0pt, minimum size=3pt, fill=white] at (1, 0.15) {};
        \end{tikzpicture}
        \right) \left( q
        \begin{tikzpicture}[baseline=-1]
            \MarkedTorusBackground[2][4][][][1][3]
            \draw[thick] (0, 0.5) to[out=0, in=180] (2, 0.7);
            \draw[line width=3mm, gray!40] (0.9, 0) to[out=120, in=270] (0.75, 0.8);
            \draw[thick] (0, 0.7) to[out=0, in=270] (0.4, 1);
            \draw[thick] (0.4, -1) -- (0.4, -0.2) to[out=90, in=180] (1, 0.15) to[out=45, in=180] (2, 0.5);
            \draw[thick] (0.7, -1) to[out=90, in=270] (0.7, -0.1) to[out=90, in=180] (1, 0.15) to[out=120, in=270] (0.7, 1);
            \draw[thick, black, fill=white] (1, 0) circle (0.15);
            \node[draw, circle, inner sep=0pt, minimum size=3pt, fill=white] at (1, 0.15) {};
        \end{tikzpicture}
        + q^{-1}
        \begin{tikzpicture}[baseline=-1]
            \MarkedTorusBackground[2][1][][][4][3]
            \draw[thick] (0.7, -1) to[out=90, in=270] (0.7, 0) to[out=90, in=180] (1, 0.15) to[out=120, in=270] (0.7, 1);
            \draw[thick] (1.3, -1) to[out=90, in=270] (1.3, 0) to[out=90, in=0] (1, 0.15) to[out=60, in=270] (1.3, 1);
            \node[draw, circle, inner sep=0pt, minimum size=3pt, fill=white] at (1, 0.15) {};
        \end{tikzpicture}
        \right)\\
        &= \left( q^{-3/2}
        \begin{tikzpicture}[baseline=-1, scale=0.9]
            \MarkedTorusBackground[2][1]
            \draw[thick] (1.3, -1) to[out=90, in=270] (1.3, 0) to[out=90, in=0] (1, 0.15) to[out=60, in=270] (1.3, 1);
            \node[draw, circle, inner sep=0pt, minimum size=3pt, fill=white] at (1, 0.15) {};
        \end{tikzpicture}
        \right) \left( q^2
        \begin{tikzpicture}[baseline=-1, scale=0.9]
            \MarkedTorusBackground[2][4][][][1][3]
            \draw[thick] (0, 0.15) -- (1, 0.15) -- (2, 0.15);
            \draw[thick] (0, -0.5) to[out=0, in=180] (1, 0.15) to[out=180, in=120] (0.7, -0.3) to[out=300, in=180] (1, -0.5) -- (2, -0.5);
            \node[draw, circle, inner sep=0pt, minimum size=3pt, fill=white] at (1, 0.15) {};
        \end{tikzpicture}
        +
        \begin{tikzpicture}[baseline=-1, scale=0.9]
            \MarkedTorusBackground[2][4][][][1][3]
            \draw[thick] (0, -1) to[out=45, in=180] (1, 0.15) to[out=60, in=225] (2, 1);
            \draw[thick] (0.7, -1) -- (0.7, -0.2) to[out=90, in=180] (1, 0.15) to[out=45, in=180] (2, 0.5);
            \draw[thick] (0, 0.5) to[out=0, in=270] (0.7, 1);
            \node[draw, circle, inner sep=0pt, minimum size=3pt, fill=white] at (1, 0.15) {};
        \end{tikzpicture}
        + q^{-1}
        \begin{tikzpicture}[baseline=-1, scale=0.9]
            \MarkedTorusBackground[2][1][][][4][3]
            \draw[thick] (0.7, -1) to[out=90, in=270] (0.7, 0) to[out=90, in=180] (1, 0.15) to[out=120, in=270] (0.7, 1);
            \draw[thick] (1.3, -1) to[out=90, in=270] (1.3, 0) to[out=90, in=0] (1, 0.15) to[out=60, in=270] (1.3, 1);
            \node[draw, circle, inner sep=0pt, minimum size=3pt, fill=white] at (1, 0.15) {};
        \end{tikzpicture}
        \right)\\
        &= q^{-3/2} \left( q^2
        \begin{tikzpicture}[baseline=-1]
            \MarkedTorusBackground[4][6][3][2][5][1]
            \draw[thick] (0, -0.5) to[out=0, in=180] (1, 0.15) to[out=180, in=120] (0.7, -0.3) to[out=300, in=180] (1, -0.5) -- (2, -0.5);
            \draw[line width=3mm, gray!40] (1.3, -0.7) -- (1.3, -0.3);
            \draw[thick] (1.3, -1) to[out=90, in=270] (1.3, 0) to[out=90, in=0] (1, 0.15) to[out=60, in=270] (1.3, 1);
            \draw[thick] (0, 0.15) -- (1, 0.15) -- (2, 0.15);
            \node[draw, circle, inner sep=0pt, minimum size=3pt, fill=white] at (1, 0.15) {};
        \end{tikzpicture}
        +
        \begin{tikzpicture}[baseline=-1]
            \MarkedTorusBackground[4][6][2][3][5][1]
            \draw[thick] (0, -1) to[out=45, in=180] (1, 0.15) to[out=60, in=225] (2, 1);
            \draw[thick] (0.7, -1) -- (0.7, -0.2) to[out=90, in=180] (1, 0.15) to[out=45, in=180] (2, 0.5);
            \draw[thick] (0, 0.5) to[out=0, in=270] (0.7, 1);
            \draw[thick] (1.3, -1) to[out=90, in=270] (1.3, 0) to[out=90, in=0] (1, 0.15) to[out=60, in=270] (1.3, 1);
            \node[draw, circle, inner sep=0pt, minimum size=3pt, fill=white] at (1, 0.15) {};
        \end{tikzpicture}
        + q^{-1}
        \begin{tikzpicture}[baseline=-1]
            \MarkedTorusBackground[4][3][6][2][1][5]
            \draw[thick] (0.7, -1) to[out=90, in=270] (0.7, 0) to[out=90, in=180] (1, 0.15) to[out=120, in=270] (0.7, 1);
            \draw[thick] (1.3, -1) to[out=90, in=270] (1.3, 0) to[out=90, in=0] (1, 0.15) to[out=60, in=270] (1.3, 1);
            \draw[thick] (1.5, -1) to[out=90, in=270] (1.5, 0) to[out=90, in=0] (1, 0.15) to[out=60, in=270] (1.5, 1);
            \node[draw, circle, inner sep=0pt, minimum size=3pt, fill=white] at (1, 0.15) {};
        \end{tikzpicture}
        \right)\\
        &= q^{-3/2} \left( q^3
        \begin{tikzpicture}[baseline=-1, scale=0.9]
            \MarkedTorusBackground[4][6][3][2][5][1]
            \draw[thick] (0, -0.5) to[out=0, in=180] (1, 0.15);
            \draw[thick] (1, 0.15) to[out=0, in=180] (2, -0.5);
            \draw[thick] (0, 0.15) -- (1, 0.15) -- (2, 0.15);
            \draw[thick] (1, 0.15) to[out=180, in=120] (0.7, -0.3) to[out=300, in=90] (1, -1);
            \draw[thick] (1, 0.15) -- (1, 1);
            \node[draw, circle, inner sep=0pt, minimum size=3pt, fill=white] at (1, 0.15) {};
        \end{tikzpicture}
        + q
        \begin{tikzpicture}[baseline=-1, scale=0.9]
            \MarkedTorusBackground[4][6][3][2][5][1]
            \draw[thick] (0, 0.15) -- (1, 0.15) -- (2, 0.15);
            \draw[thick] (0.5, -1) -- (0.5, -0.2) to[out=90, in=180] (1, 0.15) to[out=45, in=180] (2, 0.5);
            \draw[thick] (0, 0.5) to[out=0, in=270] (0.5, 1);
            \draw[thick] (1, 0.15) to[out=0, in=90] (1.3, -0.2) to[out=270, in=0] (1, -0.5) to[out=180, in=270] (0.7, -0.2) to[out=90, in=180] (1, 0.15);
            \node[draw, circle, inner sep=0pt, minimum size=3pt, fill=white] at (1, 0.15) {};
        \end{tikzpicture}
        +
        \begin{tikzpicture}[baseline=-1, scale=0.9]
            \MarkedTorusBackground[4][6][2][3][5][1]
            \draw[thick] (0, -1) to[out=45, in=180] (1, 0.15) to[out=60, in=225] (2, 1);
            \draw[thick] (0.7, -1) -- (0.7, -0.2) to[out=90, in=180] (1, 0.15) to[out=45, in=180] (2, 0.5);
            \draw[thick] (0, 0.5) to[out=0, in=270] (0.7, 1);
            \draw[thick] (1.3, -1) to[out=90, in=270] (1.3, 0) to[out=90, in=0] (1, 0.15) to[out=60, in=270] (1.3, 1);
            \node[draw, circle, inner sep=0pt, minimum size=3pt, fill=white] at (1, 0.15) {};
        \end{tikzpicture}
        + q^{-1}
        \begin{tikzpicture}[baseline=-1, scale=0.9]
            \MarkedTorusBackground[4][3][6][2][1][5]
            \draw[thick] (0.7, -1) to[out=90, in=270] (0.7, 0) to[out=90, in=180] (1, 0.15) to[out=120, in=270] (0.7, 1);
            \draw[thick] (1.3, -1) to[out=90, in=270] (1.3, 0) to[out=90, in=0] (1, 0.15) to[out=60, in=270] (1.3, 1);
            \draw[thick] (1.5, -1) to[out=90, in=270] (1.5, 0) to[out=90, in=0] (1, 0.15) to[out=60, in=270] (1.5, 1);
            \node[draw, circle, inner sep=0pt, minimum size=3pt, fill=white] at (1, 0.15) {};
        \end{tikzpicture}
        \right)\\
        &= q^{-3/2} \left( q^7
        \begin{tikzpicture}[baseline=-1, scale=0.9]
            \MarkedTorusBackground[2][6][4][1][3][5]
            \draw[thick] (0, -0.5) to[out=0, in=180] (1, 0.15);
            \draw[thick] (1, 0.15) to[out=0, in=180] (2, -0.5);
            \draw[thick] (0, 0.15) -- (1, 0.15) -- (2, 0.15);
            \draw[thick] (1, 0.15) to[out=180, in=120] (0.7, -0.3) to[out=300, in=90] (1, -1);
            \draw[thick] (1, 0.15) -- (1, 1);
            \node[draw, circle, inner sep=0pt, minimum size=3pt, fill=white] at (1, 0.15) {};
        \end{tikzpicture}
        + q^2
        \begin{tikzpicture}[baseline=-1, scale=0.9]
            \MarkedTorusBackground[2][6][4][5][3][1]
            \draw[thick] (0, 0.15) -- (1, 0.15) -- (2, 0.15);
            \draw[thick] (0.5, -1) -- (0.5, -0.2) to[out=90, in=180] (1, 0.15) to[out=45, in=180] (2, 0.5);
            \draw[thick] (0, 0.5) to[out=0, in=270] (0.5, 1);
            \draw[thick] (1, 0.15) to[out=0, in=90] (1.3, -0.2) to[out=270, in=0] (1, -0.5) to[out=180, in=270] (0.7, -0.2) to[out=90, in=180] (1, 0.15);
            \node[draw, circle, inner sep=0pt, minimum size=3pt, fill=white] at (1, 0.15) {};
        \end{tikzpicture}
        + q^{-1}
        \begin{tikzpicture}[baseline=-1, scale=0.9]
            \MarkedTorusBackground[6][4][2][3][5][1]
            \draw[thick] (0, -1) to[out=45, in=180] (1, 0.15) to[out=60, in=225] (2, 1);
            \draw[thick] (0.7, -1) -- (0.7, -0.2) to[out=90, in=180] (1, 0.15) to[out=45, in=180] (2, 0.5);
            \draw[thick] (0, 0.5) to[out=0, in=270] (0.7, 1);
            \draw[thick] (1.3, -1) to[out=90, in=270] (1.3, 0) to[out=90, in=0] (1, 0.15) to[out=60, in=270] (1.3, 1);
            \node[draw, circle, inner sep=0pt, minimum size=3pt, fill=white] at (1, 0.15) {};
        \end{tikzpicture}
        + q^{-1}
        \begin{tikzpicture}[baseline=-1, scale=0.9]
            \MarkedTorusBackground[4][3][6][2][1][5]
            \draw[thick] (0.7, -1) to[out=90, in=270] (0.7, 0) to[out=90, in=180] (1, 0.15) to[out=120, in=270] (0.7, 1);
            \draw[thick] (1.3, -1) to[out=90, in=270] (1.3, 0) to[out=90, in=0] (1, 0.15) to[out=60, in=270] (1.3, 1);
            \draw[thick] (1.5, -1) to[out=90, in=270] (1.5, 0) to[out=90, in=0] (1, 0.15) to[out=60, in=270] (1.5, 1);
            \node[draw, circle, inner sep=0pt, minimum size=3pt, fill=white] at (1, 0.15) {};
        \end{tikzpicture}
        \right)\\
        &\Rightarrow \varphi_{\mathcal{E}}(y_3) = (x_4 x_2 x_1)^{-1} (x_4 x_2 x_1) \varphi_{\mathcal{E}}(y_3) = (x_4 x_2 x_1)^{-1} \varphi_{\mathcal{E}}\left( \psi_{\mathcal{E}} (x_4 x_2 x_1) y_3 \right)\\
        &= q^7x_1^{-1}x_2^{-1}x_4^{-1}x_2x_1^2 + q^{2}x_1^{-1}x_2^{-1}x_4^{-1}x_5x_1x_3 + q^{-1}x_1^{-1}x_2^{-1}x_3^{2} + q^{-1} x_1^{-1}x_4\\
        &= q^{-1}x_1x_4^{-1} + q^{-1}x_1^{-1}x_4 + q^{-1}x_1^{-1}x_2^{-1}x_3^{2} + x_2^{-1}x_3x_4^{-1}x_5.
    \end{align*}

    By Proposition~\ref{prop:embedIntoQT}, $\varphi_{\mathcal{E}}$ is an algebra embedding and so $\left(q^2 - q^{-2}\right)^{-1}[\varphi_{\mathcal{E}}(y_i), \varphi_{\mathcal{E}}(y_{i+1})]_q = \varphi_{\mathcal{E}}(y_{i+2})$ for $i\mod 3$ follows. We have also used a computer to independently verified the $q$-commuter relations between the images of the $y_i$. 

    Finally,
    \begin{align*}
        \psi_{\mathcal{E}}(x_1) \cdot \partial
        &= q^{-1/2}
        \begin{tikzpicture}[baseline=-1]
            \MarkedTorusBackground[2][1]
            \draw[thick] (1, 0) circle (0.4);
            \draw[line width=2mm, gray!40] (0.5, 0.15) -- (0.7, 0.15);
            \draw[line width=2mm, gray!40] (1.3, 0.15) -- (1.5, 0.15);
            \draw[thick] (0, 0.15) -- (2, 0.15);
            \node[draw, circle, inner sep=0pt, minimum size=3pt, fill=white] at (1, 0.15) {};
        \end{tikzpicture} \\
        &= q^{3/2}
        \begin{tikzpicture}[baseline=-1]
            \MarkedTorusBackground[2][1]
            \draw[thick] (0, 0.15) -- (1, 0.15) to[out=180, in=120] (0.7, -0.3) to[out=300, in=180] (1,-0.45) to[out=0, in=180] (2, 0.15);
            \node[draw, circle, inner sep=0pt, minimum size=3pt, fill=white] at (1, 0.15) {};
        \end{tikzpicture} + q^{-1/2}
        \begin{tikzpicture}[baseline=-1]
            \MarkedTorusBackground[2][1]
            \draw[thick] (0, 0.5) -- (2, 0.5);
            \draw[thick] (1, 0.15) to[out=0, in=90] (1.3, -0.2) to[out=270, in=0] (1, -0.5) to[out=180, in=270] (0.7, -0.2) to[out=90, in=180] (1, 0.15);
            \node[draw, circle, inner sep=0pt, minimum size=3pt, fill=white] at (1, 0.15) {};
        \end{tikzpicture} + q^{-5/2}
        \begin{tikzpicture}[baseline=-1]
            \MarkedTorusBackground[2][1]
            \draw[thick] (2, 0.15) -- (1, 0.15) to[out=0, in=60] (1.3, -0.3) to[out=240, in=0] (1,-0.45) to[out=180, in=0] (0, 0.15);
            \node[draw, circle, inner sep=0pt, minimum size=3pt, fill=white] at (1, 0.15) {};
        \end{tikzpicture} \\
        \Rightarrow \varphi_{\mathcal{E}}(\partial)
        &= x_{1}^{-1} \left[ q^{3/2}\left( q^{-7/2} x_1 x_2 x_4^{-1} + q^{-1/2} x_3 x_4^{-1} x_5 \right) + \left( q^{-1}x_2x_3^{-1} + q^{-1}x_2^{-1}x_3 + qx_1^{2}x_3^{-1}x_4^{-1}\right.\right. \\
        &\phantom{=} \left.\left. + q^{2}x_1x_2^{-1}x_4^{-1}x_5 \right)x_5 + q^{-5/2} \left( q^{-1/2}x_3^{-1}x_4x_5 + q^{11/2}x_1^{2}x_2^{-1}x_3^{-1}x_5 + q^{1/2}x_1x_2^{-1}x_4 \right) \right] \\
        &= q^{-2}x_2^{-1}x_4 + q^{-2}x_2x_4^{-1} + qx_1^{-1}x_3x_4^{-1}x_5 + qx_1x_3^{-1}x_4^{-1}x_5 + q^{-3}x_1^{-1}x_3^{-1}x_4x_5 \\
        &\phantom{.=} + q^{3}x_1x_2^{-1}x_3^{-1}x_5 + q^{-1}x_1^{-1}x_2x_3^{-1}x_5 + q^{-1}x_1^{-1}x_2^{-1}x_3x_5 + q^{2}x_2^{-1}x_4^{-1}x_5^{2}.
    \end{align*}
    Both of the calculations of $\varphi_{\mathcal{E}}\left( X_{1,\frac{1}{2}}(+,+)\right)$ and $\varphi_{\mathcal{E}}\left( X_{1,-\frac{1}{2}}(+,+)\right)$ for our substitutions can be found in Appendix \ref{appendix:DiagCalc}
\end{proof}

\subsection{A Module of Laurent Polynomials}
The goal of this section is to construct an action of $\mathscr{S}(T^2 \setminus D^2)$ on $\mathbb{C}[x^{\pm 1}, y^{\pm 1}, z^{\pm 1}, w^{\pm 1}]$.

\begin{prop}
    Let $\mathbb{T}^n(Q)$ be the quantum torus of $n$ generators, $\frac{\mathbb{K} \langle x_1^{\pm 1}, \cdots, x_n^{\pm 1} \rangle }{x_i x_j = q^{Q_{i,j}} x_j x_i}$, where $Q$ is the corresponding skew-symmetric integral matrix.
    If $k$ is the number of non-central generators of $\mathbb{T}^n$ and $q^{\frac{Q_{i,j}}{2}} \in \mathbb{K}$ for all $i,j$, then the commutative ring $\mathbb{K}[y_1^{\pm 1}, \cdots, y_{k-1}^{\pm 1}]$ has a well defined $\mathbb{T}^n$-module structure.
    In particular, if the first $k$ generators, $\{ x_1, \cdots, x_k \}$, are our non-commutative generators, then for each $i \in \{1, \cdots, k-1\}$ and $m \in \{ k+1, \cdots, n \}$, the operators
    $$x_i \cdot f(y_1, y_2, \cdots, y_{k-1}) := y_i f(q^{Q_{i, 1}/2}y_1, q^{Q_{i, 2}/2}y_2, \cdots, q^{Q_{i, {k-1}}/2}y_{k-1})$$
    $$x_{k} \cdot f(y_1, y_2, \cdots, y_{k-1}) := f(q^{Q_{k, 1}}y_1, q^{Q_{k, 2}}y_2, \cdots, q^{Q_{k, {k-1}}}y_{k-1})$$
    $$x_m \cdot f(y_1, y_2, \cdots, y_{k-1}) := f(y_1, y_2, \cdots, y_{k-1}).$$
    define a $\mathbb{T}^n$-module.
\end{prop}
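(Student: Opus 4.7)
The plan is to verify the module axioms by checking that the operators assigned to $x_1, \ldots, x_n$ satisfy the defining quantum-torus relations $x_i x_j = q^{Q_{i,j}} x_j x_i$. Since each assigned operator is a composition of a monomial multiplication and a rescaling of the variables, the only real ingredient needed is the intertwining rule
\[
    S_i(y_j) = q^{Q_{i,j}/2} y_j,
\]
where $S_i$ denotes the rescaling $y_\ell \mapsto q^{Q_{i,\ell}/2} y_\ell$ for $i < k$, and $T_k$ denotes the rescaling $y_\ell \mapsto q^{Q_{k,\ell}} y_\ell$.

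First I would treat the pair $(i,j)$ with $i, j \in \{1, \ldots, k-1\}$. Writing the action as $x_i \cdot f = y_i \, S_i(f)$, commutativity of the rescalings $S_i \circ S_j = S_j \circ S_i$ combined with the intertwining rule gives
\[
    x_i x_j \cdot f = y_i \, S_i(y_j) \cdot (S_i S_j)(f) = q^{Q_{i,j}/2} \, y_i y_j \, (S_i S_j)(f).
\]
Swapping $i \leftrightarrow j$ produces the same expression with $q^{Q_{j,i}/2} = q^{-Q_{i,j}/2}$ in front (using the skew-symmetry of $Q$ and commutativity of the $y_\ell$), so the two sides differ by exactly the factor $q^{Q_{i,j}}$ demanded by the relation.

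Next I would handle the mixed pair $(i, k)$ for $i < k$. A parallel computation gives
\[
    x_i x_k \cdot f = y_i \, S_i(T_k(f)), \qquad x_k x_i \cdot f = T_k(y_i) \cdot T_k(S_i(f)) = q^{Q_{k,i}} \, y_i \, T_k(S_i(f)),
\]
and since $S_i$ and $T_k$ commute (they are commuting rescalings of the same variables), these agree up to the factor $q^{Q_{k,i}} = q^{-Q_{i,k}}$, matching $x_i x_k = q^{Q_{i,k}} x_k x_i$. Relations between the central generators $x_m$ (for $m > k$) and all other generators are automatic, since the $x_m$ act as the identity and satisfy $Q_{m, \ell} = 0$ by centrality.

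Finally, each operator is invertible: for $i < k$ the inverse is $f \mapsto y_i^{-1} S_i^{-1}(f)$ (using $Q_{i,i} = 0$), for $x_k$ it is the opposite rescaling, and for the $x_m$ it is already the identity. Hence the action extends uniquely from the generators to the full quantum torus containing the $x_i^{\pm 1}$. The only mild obstacle is careful bookkeeping of the half-integer exponents of $q$, which is precisely why the hypothesis $q^{Q_{i,j}/2} \in \mathbb{K}$ is imposed; there is no deeper difficulty.
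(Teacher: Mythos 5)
Your proposal is correct and follows essentially the same route as the paper: a direct verification that the assigned operators satisfy the defining relations $x_ix_j=q^{Q_{i,j}}x_jx_i$ for each type of pair (two non-central indices below $k$, a non-central index paired with $x_k$, and the central generators), using skew-symmetry of $Q$ and the commutativity of the rescaling substitutions. Your factorization into "multiplication by $y_i$" composed with the rescaling $S_i$ is just a cleaner bookkeeping of the paper's computation, and your added remark on invertibility of the operators (needed to extend the action to negative powers of the generators) is a small point the paper leaves implicit.
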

\begin{proof}
    Our relations $x_i x_j = q^{Q_{i,j}} x_j x_i$ hold as for all $i,j \in \{ 1, \cdots, k-1 \}$
    \begin{align*}
        x_i x_j \cdot f(y_1, \cdots, y_{k-1}) &= q^{\frac{Q_{i, j}}{2}} y_i y_j f(q^{\frac{Q_{j, 1} + Q_{i, 1}}{2}}y_1, \cdots, q^{\frac{Q_{j, k-1} + Q_{i, k-1}}{2}}y_{k-1})\\
        &= q^{Q_{i, j}} x_j x_i \cdot f(y_1, \cdots, y_{k-1})\\
        x_i x_k \cdot f(y_1, \cdots, y_{k-1}) &= y_i f(q^{\frac{Q_{i, 1}}{2} + Q_{k, 1}}y_1, \cdots, q^{\frac{Q_{i, k-1}}{2} + Q_{k, {k-1}}}y_{k-1})\\
        &= q^{Q_{i, k}} x_k x_i \cdot f(y_1, \cdots, y_{k-1}).
    \end{align*}
    Clearly, for any $m, m^{\prime} \in \{ k+1, \cdots, n \}$ we have $x_i x_m \cdot f(y_1, \cdots, y_{k-1}) = x_m x_i \cdot f(y_1, \cdots, y_{k-1})$ and $x_m x_{m^{\prime}} \cdot f(y_1, \cdots, y_{k-1}) = x_m x_{m^{\prime}} \cdot f(y_1, \cdots, y_{k-1})$.
\end{proof}

By this proposition, the ring of Laurent polynomials in $4$ variables is a module over our quantum $6$-torus, $\mathbb{T}^6$. Since $\mathscr{S}\left( T^2 \setminus D^2 \right) \hookrightarrow \mathbb{T}^6$, the following actions endow $\mathbb{C}[x^{\pm 1}, y^{\pm 1}, z^{\pm 1}, w^{\pm 1}]$ with the structure of a $\mathscr{S}\left( T^2 \setminus D^2 \right)$-module.
\begin{alignat*}{2}
    x_1 \cdot f(x,y,z,w) &= x f(x,qy,qz,q^{-1}w) &\qquad x_2 \cdot f(x,y,z,w) &= y f(q^{-1}x,y,q^{-1}z,q^{-2}w)\\
    x_3 \cdot f(x,y,z,w) &= z f(q^{-1}x,qy,z,q^{-1}w)  &\qquad x_4 \cdot f(x,y,z,w) &= w f(qx,q^{2}y,qz,w)\\
    x_5 \cdot f(x,y,z,w) &= f(x,y,z,w)  &\qquad x_6 \cdot f(x,y,z,w) &= f(q^{4}x,q^{4}y,q^{4}z,q^{4}w)
\end{alignat*}
Thus, the actions of $y_1$, $y_2$, $y_3$, and $\partial$ on this module are (and have been verified via Python)

\begin{align*}
    y_1 \cdot f(x,y,z,w) &= yz^{-1} f(x, q^{-1}y, q^{-1}z, q^{-1}w) + y^{-1}z f(x, qy, qz, qw)\\
    &\quad + x^2z^{-1}w^{-1} f(x, q^{-1}y, qz, q^{-1}w) + xy^{-1}w^{-1} f(x, q^{-1}y, qz, qw)\\
    y_2 \cdot f(x,y,z,w) &= xz^{-1} f(qx, y, qz, w) + x^{-1}z f(q^{-1}x, y, q^{-1}z, w) + x^{-1}yz^{-1}w f(qx, y, q^{-1}z, w)\\
    y_3 \cdot f(x,y,z,w) &= xw^{-1} f(q^{-1}x, q^{-1}y, z, q^{-1}w) + x^{-1}w f(qx, qy, z, qw)\\
    &\quad + x^{-1}y^{-1}z^2 f(q^{-1}x, qy, z, qw) + y^{-1}zw^{-1} f(q^{-1}x, q^{-1}y, z, qw).\\
    \partial \cdot f(x,y,z,w) &= \frac{z f{\left(\frac{x}{q^{2}},\frac{y}{q^{2}},\frac{z}{q^{2}},w \right)}}{x w} + \frac{z f{\left(x,y, z, q^{2}w \right)}}{x y} + \frac{w f{\left(q^{2} x,q^{2} y,q^{2} z,q^{2} w \right)}}{y} \\
    &\quad + \frac{w f{\left(q^{2} x,y, z, q^{2}w \right)}}{z x} + \frac{y f{\left(\frac{x}{q^{2}},\frac{y}{q^{2}},\frac{z}{q^{2}},\frac{w}{q^{2}} \right)}}{w} + \frac{y f{\left(x,\frac{y}{q^{2}},\frac{z}{q^{2}},w \right)}}{z x} \\
    &\quad + \frac{x f{\left(x,\frac{y}{q^{2}},z,w \right)}}{w z} + \frac{x f{\left(q^{2} x,y,q^{2} z,q^{2} w \right)}}{y z} + \frac{f{\left(x,\frac{y}{q^{2}}, z, q^{2}w \right)}}{y w}
\end{align*}
where $\partial = q y_1 y_2 y_3 - q^2 y_1^2 - q^{-2}y_2^2 - q^2 y_3^2 + q^2 + q^{-2}$ is the closed curve parallel to the boundary.

Unfortunately, $\partial$ does not have any eigenvalues due to grade shifts, however, it does have an invariant subspace, $\mathbb{C}\left[ \left(\frac{x}{yz}\right)^{\pm 1}, \left(\frac{y}{zx}\right)^{\pm 1}, \left(\frac{z}{xy}\right)^{\pm 1}, \left(\frac{w}{y}\right)^{\pm 1} \right]$.
To condense the notation a bit, let $\kappa = k_1 + k_2 + k_3 + k_4$ and $\mathbf{k} = (k_1,k_2,k_3,k_4)$.
$$\partial \cdot \sum_{\substack{\kappa=-n \\ |k_i| \leq n }}^n c_{\mathbf{k}} \left(\frac{x}{yz}\right)^{k_1} \left(\frac{y}{zx}\right)^{k_2} \left(\frac{z}{xy}\right)^{k_3} \left(\frac{w}{y}\right)^{k_4}$$
$$= \sum_{\substack{\kappa=-n \\ |k_i| \leq n }}^n c_{\mathbf{k}} \left[
q^{2\kappa}\left(\frac{x}{yz}\right)^{k_1} \left(\frac{y}{zx}\right)^{k_2} \left(\frac{z}{xy}\right)^{k_3+1} \left(\frac{w}{y}\right)^{k_4-1}
+ q^{2k_4}\left(\frac{x}{yz}\right)^{k_1} \left(\frac{y}{zx}\right)^{k_2} \left(\frac{z}{xy}\right)^{k_3+1} \left(\frac{w}{y}\right)^{k_4}\right.$$
$$+ q^{2(\kappa - k_4)}\left(\frac{x}{yz}\right)^{k_1} \left(\frac{y}{zx}\right)^{k_2} \left(\frac{z}{xy}\right)^{k_3} \left(\frac{w}{y}\right)^{k_4-1}
+ q^{-2(\kappa - k_4)} \left(\frac{x}{yz}\right)^{k_1} \left(\frac{y}{zx}\right)^{k_2} \left(\frac{z}{xy}\right)^{k_3} \left(\frac{w}{y}\right)^{k_4+1}$$
$$+ q^{2(\kappa - 2k_2 + k_4)} \left(\frac{x}{yz}\right)^{k_1+1} \left(\frac{y}{zx}\right)^{k_2} \left(\frac{z}{xy}\right)^{k_3+1} \left(\frac{w}{y}\right)^{k_4-1}
+ q^{2(\kappa - 2k_2)} \left(\frac{x}{yz}\right)^{k_1+1} \left(\frac{y}{zx}\right)^{k_2} \left(\frac{z}{xy}\right)^{k_3} \left(\frac{w}{y}\right)^{k_4-1}$$
$$+ q^{2(k_4-2k_2)} \left(\frac{x}{yz}\right)^{k_1+1} \left(\frac{y}{zx}\right)^{k_2} \left(\frac{z}{xy}\right)^{k_3} \left(\frac{w}{y}\right)^{k_4}
+ q^{2(2k_1 + k_4)} \left(\frac{x}{yz}\right)^{k_1} \left(\frac{y}{zx}\right)^{k_2+1} \left(\frac{z}{xy}\right)^{k_3} \left(\frac{w}{y}\right)^{k_4}$$
$$\left. + q^{2(k_1 - k_2 - k_3 + k_4)} \left(\frac{x}{yz}\right)^{k_1} \left(\frac{y}{zx}\right)^{k_2+1} \left(\frac{z}{xy}\right)^{k_3} \left(\frac{w}{y}\right)^{k_4+1} \right]\\
= \sum_{\substack{\kappa=-(n+1) \\ |k_i| \leq n+1 }}^{n+2} c_{\mathbf{k}}^\prime \left(\frac{x}{yz}\right)^{k_1} \left(\frac{y}{zx}\right)^{k_2} \left(\frac{z}{xy}\right)^{k_3} \left(\frac{w}{y}\right)^{k_4}$$

Furthermore, our operators $y_1$, $y_2$, and $y_3$ also have invariant subspaces. For example, $\mathbb{C}\left[ \left( \frac{x}{z} \right)^{\pm 1}, \left( \frac{yw}{xz}\right)^{\pm 1} \right]$ is an invariant subspace with respect to the action of $y_2$ on this module. To see this, we compute the following.
$$y_2 \cdot \sum_{\substack{k_1+k_2=-n \\ |k_i| \leq n}}^n c_{k_1, k_2} \left(\frac{x}{z}\right)^{k_1}\left(\frac{y w}{x z}\right)^{k_2}$$
$$= \sum_{\substack{k_1+k_2=-n \\ |k_i| \leq n}}^n c_{k_1, k_2} \left[ q^{2 k_2}\left(\frac{x}{z}\right)^{k_1-1}\left(\frac{y w}{x z}\right)^{k_2} + q^{-2 k_2} \left(\frac{x}{z}\right)^{k_1+1}\left(\frac{y w}{x z}\right)^{k_2}  + q^{2 k_1} \left(\frac{x}{z}\right)^{k_1}\left(\frac{y w}{x z}\right)^{k_2+1} \right]$$
$$= \sum_{\substack{k_1+k_2=-(n+1) \\ |k_i| \leq n+1}}^{n+1} c_{k_1, k_2}^{\prime} \left(\frac{x}{z}\right)^{k_1}\left(\frac{y w}{x z}\right)^{k_2}$$
\section{DAHA modules from 3-manifolds}\label{section:mods3mflds}

The definition of a stated skein algebra presented in Section~\ref{section:SSA} differs slightly from the conventional definition found in the literature. However, these two definitions are isomorphic as described in \cite[Section 4.4]{LY22} -- for a further discussion see the thesis of the first author.
We will also need to  incorporate the conventional model, as it helps facilitate the explanation of the module structures discussed in this section.

Let $\Sigma^\prime$ be a (possibly punctured) oriented surface with (possibly empty) boundary, and $\mathcal{P} \subset \partial \Sigma^\prime$ be a finite nonempty set such that every connected component of $\partial \Sigma^\prime$ has at least one point in $\mathcal{P}$. Then $\Sigma = \Sigma^\prime \setminus \mathcal{P}$ is called a \textit{punctured bordered surface}. The surface $\Sigma^\prime$ is always uniquely determined by its punctured bordered surface, $\Sigma$. An \textit{ideal arc} on $\Sigma$ is an immersion $ \alpha : [0,1] \to \Sigma^\prime$ such that $\alpha(0), \alpha(1) \in \mathcal{P}$ and the restriction of $\alpha$ onto $(0,1)$ is an embedding into $\Sigma$.

For any tuple $(s,t) \in \Sigma \times [0,1]$, the number $t$ is called the \textit{height} and we say a vector at $(s,t)$ is \textit{vertical} if it is parallel to $s \times [0,1]$. A \textit{stated $\partial \Sigma$-tangle} is a tuple, $(\alpha, s)$ where $\alpha \subset \Sigma \times [0,1]$ is an unoriented, framed, compact, properly embedded $1$-dimensional submanifold such that at every point in $\partial \alpha = \alpha \cap \left( \partial \Sigma \times [0,1] \right)$ the framing is \textit{vertical}, and for every boundary edge $b \subset \Sigma$, $\partial \alpha \cap (b \times [0,1])$ have distinct heights.

The stated skein algebra of a punctured bordered surface, denoted $\mathscr{S}^{pb}(\Sigma)$ or $\mathscr{S}^{pb}(\Sigma^\prime)$, is the $\mathbb{C}$-module freely spanned by isotopy classes of stated $\partial \Sigma$-tangles modulo the following local relations.
\begin{center}\resizebox{0.9\width}{!}{
    \begin{tikzpicture}
        \draw[gray!40, fill=gray!40] (0,0) circle (1);
        \draw[thick] (-0.71, 0.71) -- (0.71, -0.71);
        \draw[line width=3mm, gray!40] (0.70, 0.70) -- (-0.70, -0.70);
        \draw[thick] (0.71, 0.71) -- (-0.71, -0.71);
        \node[text width=1cm] at (1.75,0) {$= q$};
        \draw[gray!40, fill=gray!40] (3,0) circle (1);
        \draw[thick] (2.29, 0.71) to[out=-60, in=60] (2.29, -0.71);
        \draw[thick] (3.71, 0.71) to[out=-120, in=120] (3.71, -0.71);
        \node[text width=1.25cm] at (4.75,0) {$+$ $q^{-1}$};
        \draw[gray!40, fill=gray!40] (6.25,0) circle (1);
        \draw[thick] (5.54, 0.71) to[out=-60, in=-120] (6.96, 0.71);
        \draw[thick] (5.54, -0.71) to[out=60, in=120] (6.96, -0.71);
        \node at (3,-1.4) {$(R_1^{pb})$ Skein Relation};
        \draw[gray!40, fill=gray!40] (10,0) circle (1);
        \draw[thick] (10,0) circle (.5);
        \node[text width=2.4cm] at (12.4,0) {$= (-q^2 - q^{-2})$};
        \draw[gray!40, fill=gray!40] (14.7,0) circle (1);
        \node at (12.4,-1.4) {$(R_2^{pb})$ Trivial Knot Relation};
    \end{tikzpicture}}
\end{center}
\begin{center}\resizebox{0.9\width}{!}{
    \begin{tikzpicture}
        \draw[gray!40, thick, fill=gray!40, domain=-45:225] plot ({cos(\x)}, {sin(\x)}) to[out=45, in=130] (0.71, -0.71);
        \draw[thick] (-0.71, -0.71) to[out=45, in=135] (0.71, -0.71);
        \draw[thick] (-0.25, -0.45) to[out=120, in=180] (0, 0.4) to[out=0, in=60] (0.25, -0.45);
        \path[thick, tips, ->] (-0.71, -0.71) to[out=45, in=135] (0.71, -0.71);
        \node (s1) at (-0.6, -0.3) {$-$};
        \node (s2) at (0.6, -0.3) {$+$};
        \node[text width=1.4cm] at (2,0) {$= q^{-1/2}$};
        \draw[gray!40, thick, fill=gray!40, domain=-45:225] plot ({3.8+cos(\x)}, {sin(\x)}) to[out=45, in=130] (4.51, -0.71);
        \draw[thick] (3.09, -0.71) to[out=45, in=130] (4.51, -0.71);
        \path[thick, tips, ->] (3.09, -0.71) to[out=45, in=130] (4.51, -0.71);
        \node at (2, -1.4) {$(R_3^{pb})$ Trivial Arc Relation 1};
        \draw[gray!40, thick, fill=gray!40, domain=-45:225] plot ({7.5+cos(\x)}, {sin(\x)}) to[out=45, in=130] (8.21, -0.71);
        \draw[thick] (6.79, -0.71) to[out=45, in=130] (8.21, -0.71);
        \draw[thick] (7.25, -0.45) to[out=120, in=180] (7.5,0.4) to[out=0, in=60] (7.75, -0.45);
        \path[thick, tips, ->] (6.79, -0.71) to[out=45, in=130] (8.21, -0.71);
        \node (s1) at (6.9,-0.3) {$-$};
        \node (s2) at (8.1,-0.3) {$-$};
        \node[text width=1.1cm] at (9.25,0) {$= 0 =$};
        \draw[gray!40, thick, fill=gray!40, domain=-45:225] plot ({11+cos(\x)}, {sin(\x)}) to[out=45, in=130] (11.71, -0.71);
        \draw[thick] (10.29, -0.71) to[out=45, in=130] (11.71, -0.71);
        \draw[thick] (10.75, -0.45) to[out=120, in=180] (11, 0.4) to[out=0, in=60] (11.25, -0.45);
        \path[thick, tips, ->] (10.29, -0.71) to[out=45, in=130] (11.71, -0.71);
        \node (s1) at (10.4,-0.3) {$+$};
        \node (s2) at (11.6,-0.3) {$+$};
        \node at (9.25,-1.4) {$(R_4^{pb})$ Trivial Arc Relation 2};
    \end{tikzpicture}}
\end{center}
\begin{center}\resizebox{0.9\width}{!}{
    \begin{tikzpicture}
        \draw[gray!40, thick, fill=gray!40, domain=-45:225] plot ({cos(\x)}, {sin(\x)}) to[out=45, in=130] (0.71, -0.71);
        \draw[thick] (-0.71, -0.71) to[out=45, in=135] (0.71, -0.71);
        \draw[thick] (-0.25, -0.45) -- (-0.71, 0.71);
        \draw[thick] (0.25, -0.45) -- (0.71, 0.71);
        \path[thick, tips, ->] (-0.71, -0.71) to[out=45, in=135] (0.71, -0.71);
        \node (s1) at (-0.6,-0.3) {$+$};
        \node (s2) at (0.6,-0.3) {$-$};
        \node[text width=1.1cm] at (1.75,0) {$= q^{-2}$};
        \draw[gray!40, thick, fill=gray!40, domain=-45:225] plot ({3.5+cos(\x)}, {sin(\x)}) to[out=45, in=130] (4.21, -0.71);
        \draw[thick] (2.79, -0.71) to[out=45, in=130] (4.21, -0.71);
        \draw[thick] (3.25, -0.45) -- (2.79, 0.71);
        \draw[thick] (3.75, -0.45) -- (4.21, 0.71);
        \path[thick, tips, ->] (2.79, -0.71) to[out=45, in=130] (4.21, -0.71);
        \node (s3) at (2.9,-0.3) {$-$};
        \node (s4) at (4.1,-0.3) {$+$};
        \node[text width=1.2cm] at (5.25,0) {$+$ $q^{1/2}$};
        \draw[gray!40, thick, fill=gray!40, domain=-45:225] plot ({7+cos(\x)}, {sin(\x)}) to[out=45, in=130] (7.71, -0.71);
        \draw[thick] (6.29, -0.71) to[out=45, in=130] (7.71, -0.71);
        \path[thick, tips, ->] (6.29, -0.71) to[out=45, in=130] (7.71, -0.71);
        \draw[thick] (6.29, 0.71) to[out=-60, in=180] (7,0) to[out=0, in=240] (7.71,0.71);
        \node at (3.5,-1.4) {$(R_5^{pb})$ State Exchange Relation};
    \end{tikzpicture}}
\end{center}

A stated $\partial \Sigma$-tangle, $\alpha$, is said to be in \textit{generic position} if the natural projection $\pi : \Sigma \times [0,1] \to \Sigma$ restricts to an embedding of $\alpha$, except for the possibility of transverse double points in the interior of $\Sigma$. Every stated $\partial \Sigma$-tangle is isotopic to one in generic position.

Finally, let $\mathfrak{o}$ be an orientation of $\partial \Sigma$, which may differ from the orientation inherited from $\Sigma$. We say a $\partial \Sigma$-tangle diagram, $D$, is \textit{$\mathfrak{o}$-ordered} if for each boundary component, the points of $\partial D$ increase when traversing in the direction of $\mathfrak{o}$. Every $\partial \Sigma$-tangle can be presented, after an appropriate isotopy, by an $\mathfrak{o}$-ordered $\partial \Sigma$-tangle diagram. Figure~\ref{fig:OrPBSEx} shows examples of $\mathfrak{o}$-ordered $\partial \Sigma$-tangle diagrams (without states) when our surface is a torus with boundary.

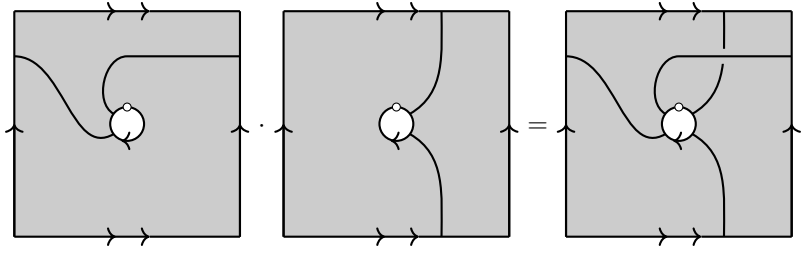
\begin{figure}
    \centering
    \begin{tikzpicture}[scale=1.5, baseline=-3]
        \MarkedTorusBackground
        \node[draw, circle, inner sep=0pt, minimum size=3pt, fill=white] at (1, 0.15) {};
        \draw[thick] (0, 0.6) to[out=0, in=210] (0.88, -0.09);
        \draw[thick] (0.88, 0.09) to[out=150, in=180] (1, 0.6) -- (2, 0.6);
        \path[thick, tips, ->] (1.5, -0.15) -- (0.95, -0.15);
    \end{tikzpicture} $\cdot$ 
    \begin{tikzpicture}[scale=1.5, baseline=-3]
        \MarkedTorusBackground
        \node[draw, circle, inner sep=0pt, minimum size=3pt, fill=white] at (1, 0.15) {};
        \draw[thick] (1.4, 1) to[out=270, in=30] (1.12, 0.09);
        \draw[thick] (1.12, -0.09) to[out=330, in=90] (1.4, -1);
        \path[thick, tips, ->] (1.5, -0.15) -- (0.95, -0.15);
    \end{tikzpicture} $=$
    \begin{tikzpicture}[scale=1.5, baseline=-3]
        \MarkedTorusBackground
        \node[draw, circle, inner sep=0pt, minimum size=3pt, fill=white] at (1, 0.15) {};
        \draw[thick] (1.4, 1) to[out=270, in=30] (1.12, 0.09);
        \draw[thick] (1.12, -0.09) to[out=330, in=90] (1.4, -1);
        \draw[line width=2mm, gray!40] (1.2, 0.6) -- (1.8, 0.6);
        \draw[thick] (0, 0.6) to[out=0, in=210] (0.88, -0.09);
        \draw[thick] (0.88, 0.09) to[out=150, in=180] (1, 0.6) -- (2, 0.6);
        \path[thick, tips, ->] (1.5, -0.15) -- (0.95, -0.15);
    \end{tikzpicture}
    \caption[An $\mathfrak{o}$-ordered diagram on a punctured bordered surface]{A product of $\mathfrak{o}$-ordered (stateless) $\partial \Sigma$-diagram where $\Sigma^\prime = T^2 \setminus D^2$ and $\mathfrak{o}$ has a clockwise orientation.}
    \label{fig:OrPBSEx}
\end{figure}

\subsection{General construction}\label{section:GenConst}

Just as the Kauffman bracket case in Section \ref{section:kbsm}, the action of $\mathscr{S}(T^2 \setminus D^2) \cong \mathscr{S}^{pb}(T^2 \setminus D^2)$ on $\mathscr{S}(M_K)$ for some knot $K$ should be dictated by the peripheral map, i.e. $T^2 \setminus D^2$ should somehow be ``glued'' to the boundary of $M_K$. Since $K$ is a knot in $S^3$, there is a canonical identification of $T^2$ with the boundary of $M_K$: one of the generating cycles in $T^2$ should map to a curve that generates $H_1(M_K) \cong \mathbb Z$ (the meridian), and the other should map to $0$ in $H_1(M_K)$ (the longitude). In order to incorporate the data of our marking, we'll need to add a marking to the knot complement and identify both markings in such a way that aligns with a module structure. Define $\mathcal{N}_K: [0,1] \hookrightarrow \partial M_K$ to be this marking and consider $\mathscr{S}(M_K, \mathcal{N}_K)$.

We will first explain this picture purely topologically and not yet consider the corresponding skein algebras. Let $\mathcal{P} \in \partial \left( T^2 \setminus D^2 \right)$ be our ideal point and denote $\Sigma^\prime := \left( T^2 \setminus D^2 \right) \setminus \mathcal{P}$ as the corresponding punctured bordered surface with clockwise orientation, $\mathfrak{o}$, on our boundary edge, $\partial \Sigma^\prime$. Identify $\partial \Sigma^\prime$ with the map $b : (0,1) \to \Sigma^\prime$ such that the induced orientation from $(0,1)$ is compatible with $\mathfrak{o}$.
\begin{center}
    \begin{tikzpicture}[scale=1.5]
        \MarkedTorusBackground
        \path[thick, tips, ->] (1.5, -0.15) -- (0.95, -0.15);
        \node[draw, circle, inner sep=0pt, minimum size=4pt, fill=white] at (1, 0.15) {};
        \node at (1.17, 0.35) {$\mathcal{P}$};
        \node at (0.75, -0.3) {$b$};
    \end{tikzpicture}
\end{center}
Glue $T^2 \times [0,1]$ to the boundary of $M_K$ so that $T^2 \times \{0\}$ is identified with $\partial M_K$ via the peripheral embedding, $T^2 \times \{0\} \hookrightarrow \partial M_K$. At this point, it should be the case that and $T^2 \times (0,1] \bigcap M_K = \emptyset$ and $\mathcal{N}_K$ is on the interior of $\widetilde{M}_K := M_K \bigcup \left(T^2 \times [0,1]\right)$. We can compose embeddings of our thickened punctured bordered surface $$\iota_K : \Sigma^\prime \hookrightarrow T^2 \hookrightarrow \widetilde{M}_K$$ such that $b([\epsilon, 1-\epsilon]) \times \{0\}$ is identified with $\mathcal{N}_K$ for some small $\epsilon > 0$. We will eventually identify $b([\epsilon, 1-\epsilon]) \times \{1\}$ as the new marking.

Let $\alpha \in \mathscr{S}^{pb}(\Sigma^\prime)$ and $D \in \mathscr{S}(M_K)$. First isotope any closed curves in $D$ away from the boundary and isotope the endpoints of any stated $\partial M_K$-tangles of $D$ down the marking to $\mathcal{N}_K\left((0, \frac{1}{2})\right)$.
\begin{center}
    \begin{tikzpicture}
        \draw[thick] (0, 0) -- (1, 0);
        \draw[thick, orange] (1, 0) -- (2, 0);
        \node[circle, inner sep=0pt, minimum size=3pt, fill=black] at (0, 0) {};
        \node[circle, inner sep=0pt, minimum size=3pt, fill=orange] at (2, 0) {};
        \path[thick, tips, ->] (0, 0) -- (1.1, 0);
        \draw (0.75, 0) -- (0.75, 1);
        \draw (1.1, 0) -- (1.1, 1);
        \draw (1.45, 0) -- (1.45, 1);
        \draw (1.8, 0) -- (1.8, 1);
        \node at (0.2, 0.3) {$\scriptstyle{\mathcal{N}_K}$};
        \node at (3, 0.5) {$\rightsquigarrow$};
        \draw[thick] (4, 0) -- (5, 0);
        \draw[thick, orange] (5, 0) -- (6, 0);
        \node[circle, inner sep=0pt, minimum size=3pt, fill=black] at (4, 0) {};
        \node[circle, inner sep=0pt, minimum size=3pt, fill=orange] at (6, 0) {};
        \path[thick, tips, ->] (4, 0) -- (5.1, 0);
        \draw (4.1, 0) -- (4.1, 1);
        \draw (4.35, 0) -- (4.35, 1);
        \draw (4.6, 0) -- (4.6, 1);
        \draw (4.85, 0) -- (4.85, 1);
        \node at (5.85, 0.3) {$\scriptstyle{\mathcal{N}_K}$};
    \end{tikzpicture}
\end{center}
Similarly, isotope the endpoints of any stated $\partial \left(T^2 \setminus D^2\right)$-tangles in $\alpha$, remaining in generic position while doing so, up to $\mathcal{N}\left((\frac{1}{2}, 1-\epsilon)\right) \times [0,1]$.

We can extend any stated $\mathcal{N}_K$-tangles to end on $b \times \{1\}$ instead of $b \times \{0\}$. Specifically, let $(m,s)$ be a stated $\mathcal{N}_K$-tangle (using the definition in Section \ref{section:SSA}) and $e_1, e_2$ be the endpoints of $m$. In addition to lying on $\mathcal{N}_K$ in $M_K$, we can view $e_1$ and $e_2$ as lying in $b \times [0,1]$ in $\widetilde{M}_K$. Define $$\Tilde{m} = m \bigcup_{i = 1,2} \left( e_i \times [0,1] \right)$$ where $e_i \times [0,1] \subset b \times [0,1]$. Then $(\Tilde{m}, \Tilde{s})$ is a stated $\widetilde{\mathcal{N}}_K$-tangle where $\Tilde{s}(e_i \times \{1\}) = s(e_i)$ and $\widetilde{\mathcal{N}}_K := \iota_K( b([\epsilon, 1-\epsilon]) \times \{1\} )$.

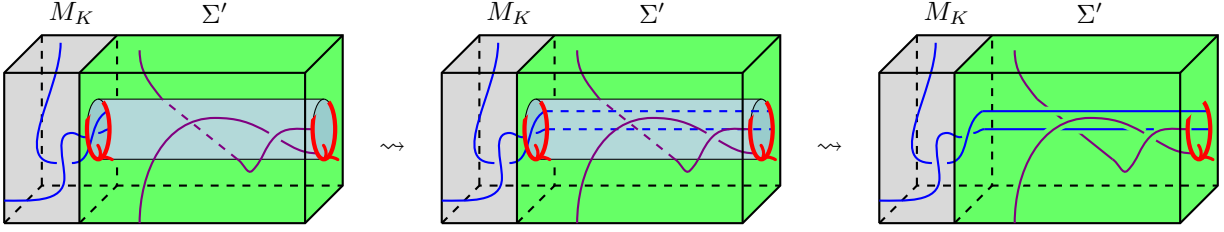
\begin{figure}
    \centering
    \begin{tikzpicture}[baseline=-2]
        \path[fill=gray!30] (0, -1) -- (0, 1) -- (0.5, 1.5) -- (1.5, 1.5) -- (1, 1) -- (1, -1) -- (0, -1);
        \path[fill=green!60] (1, -1) -- (1, 1) -- (1.5, 1.5) -- (4.5, 1.5) -- (4.5, -0.5) -- (4, -1) -- (1, -1);
        \draw[thick] (0, -1) -- (0, 1) -- (0.5, 1.5) -- (4.5, 1.5) -- (4.5, -0.5) -- (4, -1) -- (0, -1);
        \draw[thick, dashed] (0, -1) -- (0.5, -0.5) -- (4.5, -0.5);
        \draw[thick, dashed] (0.5, -0.5) -- (0.5, 1.5);
        \draw[thick, dashed] (1, -1) -- (1.5, -0.5) -- (1.5, 1.5);
        \path[fill=teal!30] (1.25, 0.65) -- (4.25, 0.65) -- (4.25, -0.15) -- (1.25, -0.15) -- (1.25, 0.65);
        \draw[fill=teal!40] (1.25, 0.25) ellipse (0.15 and 0.4);
        \draw[fill=teal!40] (4.25, 0.25) ellipse (0.15 and 0.4);
        \draw[thick, blue] (1.37, 0.49) to[out=210, in=0] (0.9, -0.2);
        \draw[thick, blue] (0.7, -0.2) to[out=180, in=270] (0.75, 1.4);
        \draw[thick, blue] (1.4, 0.25) to[out=180, in=0] (1.25, 0.2);
        \draw[thick, blue] (1.1, 0.15) to[out=200, in=0] (0.9, 0.2) to[out=180, in=90] (0.8, -0.4) to[out=270, in=0] (0, -0.7);
        \draw (1.25, 0.65) -- (4.25, 0.65);
        \draw (1.25, -0.15) -- (4.25, -0.15);
        \draw[thick, violet] (4.16, -0.07) to[out=180, in=330] (3.68, 0.1);
        \draw[thick, violet] (3.52, 0.2) to[out=150, in=0] (2.8, 0.4) to[out=180, in=90] (1.8, -1);
        \draw[thick, violet] (4.1, 0.25) to[out=180, in=45] (3.6, 0.15) to[out=225, in=45] (3.3, -0.3) to[out=225, in=330] (3.1, -0.15);
        \draw[thick, dashed, violet] (3.1, -0.15) -- (2.1, 0.65);
        \draw[thick, violet] (2.1, 0.65) to[out=129, in=270] (1.8, 1.3);
        \begin{scope}
            \clip (4, 0.3) -- (4.5, 0.8) -- (4.5, -0.5) -- (4, -1) -- (4, 0.3);
            \draw[ultra thick, red] (4.25, 0.25) ellipse (0.15 and 0.4);
            \path[very thick, red, tips, ->] (4.5, 0.1) -- (4.25, -0.15);
        \end{scope}
        \begin{scope}
            \clip (1, 0.3) -- (1.5, 0.8) -- (1.5, -0.5) -- (1, -1) -- (1, 0.3);
            \draw[ultra thick, red] (1.25, 0.25) ellipse (0.15 and 0.4);
            \path[very thick, red, tips, ->] (1.5, 0.1) -- (1.25, -0.15);
        \end{scope}
        \draw[thick] (0, 1) -- (4, 1);
        \draw[thick] (1, -1) -- (1, 1) -- (1.5, 1.5);
        \draw[thick] (4, -1) -- (4, 1) -- (4.5, 1.5);
        \node at (0.9, 1.8) {$M_K$};
        \node at (2.8, 1.8) {$\Sigma^\prime$};
    \end{tikzpicture} $\quad\rightsquigarrow\quad$ 
    \begin{tikzpicture}[baseline=-2]
        \path[fill=gray!30] (0, -1) -- (0, 1) -- (0.5, 1.5) -- (1.5, 1.5) -- (1, 1) -- (1, -1) -- (0, -1);
        \path[fill=green!60] (1, -1) -- (1, 1) -- (1.5, 1.5) -- (4.5, 1.5) -- (4.5, -0.5) -- (4, -1) -- (1, -1);
        \draw[thick] (0, -1) -- (0, 1) -- (0.5, 1.5) -- (4.5, 1.5) -- (4.5, -0.5) -- (4, -1) -- (0, -1);
        \draw[thick, dashed] (0, -1) -- (0.5, -0.5) -- (4.5, -0.5);
        \draw[thick, dashed] (0.5, -0.5) -- (0.5, 1.5);
        \draw[thick, dashed] (1, -1) -- (1.5, -0.5) -- (1.5, 1.5);
        \path[fill=teal!30] (1.25, 0.65) -- (4.25, 0.65) -- (4.25, -0.15) -- (1.25, -0.15) -- (1.25, 0.65);
        \draw[fill=teal!40] (1.25, 0.25) ellipse (0.15 and 0.4);
        \draw[fill=teal!40] (4.25, 0.25) ellipse (0.15 and 0.4);
        \draw[thick, blue] (1.37, 0.49) to[out=210, in=0] (0.9, -0.2);
        \draw[thick, blue] (0.7, -0.2) to[out=180, in=270] (0.75, 1.4);
        \draw[thick, blue] (1.4, 0.25) to[out=180, in=0] (1.25, 0.2);
        \draw[thick, blue] (1.1, 0.15) to[out=200, in=0] (0.9, 0.2) to[out=180, in=90] (0.8, -0.4) to[out=270, in=0] (0, -0.7);
        \draw[thick, dashed, blue] (1.37, 0.49) -- (4.37, 0.49);
        \draw[thick, dashed, blue] (1.4, 0.25) -- (4.4, 0.25);
        \draw (1.25, 0.65) -- (4.25, 0.65);
        \draw (1.25, -0.15) -- (4.25, -0.15);
        \draw[thick, violet] (4.16, -0.07) to[out=180, in=330] (3.68, 0.1);
        \draw[thick, violet] (3.52, 0.2) to[out=150, in=0] (2.8, 0.4) to[out=180, in=90] (1.8, -1);
        \draw[thick, violet] (4.1, 0.25) to[out=180, in=45] (3.6, 0.15) to[out=225, in=45] (3.3, -0.3) to[out=225, in=330] (3.1, -0.15);
        \draw[thick, dashed, violet] (3.1, -0.15) -- (2.1, 0.65);
        \draw[thick, violet] (2.1, 0.65) to[out=129, in=270] (1.8, 1.3);
        \begin{scope}
            \clip (4, 0.3) -- (4.5, 0.8) -- (4.5, -0.5) -- (4, -1) -- (4, 0.3);
            \draw[ultra thick, red] (4.25, 0.25) ellipse (0.15 and 0.4);
            \path[very thick, red, tips, ->] (4.5, 0.1) -- (4.25, -0.15);
        \end{scope}
        \begin{scope}
            \clip (1, 0.3) -- (1.5, 0.8) -- (1.5, -0.5) -- (1, -1) -- (1, 0.3);
            \draw[ultra thick, red] (1.25, 0.25) ellipse (0.15 and 0.4);
            \path[very thick, red, tips, ->] (1.5, 0.1) -- (1.25, -0.15);
        \end{scope}
        \draw[thick] (0, 1) -- (4, 1);
        \draw[thick] (1, -1) -- (1, 1) -- (1.5, 1.5);
        \draw[thick] (4, -1) -- (4, 1) -- (4.5, 1.5);
        \node at (0.9, 1.8) {$M_K$};
        \node at (2.8, 1.8) {$\Sigma^\prime$};
    \end{tikzpicture} $\quad\rightsquigarrow\quad$ 
    \begin{tikzpicture}[baseline=-2]
        \path[fill=gray!30] (0, -1) -- (0, 1) -- (0.5, 1.5) -- (1.5, 1.5) -- (1, 1) -- (1, -1) -- (0, -1);
        \path[fill=green!60] (1, -1) -- (1, 1) -- (1.5, 1.5) -- (4.5, 1.5) -- (4.5, -0.5) -- (4, -1) -- (1, -1);
        \draw[thick] (0, -1) -- (0, 1) -- (0.5, 1.5) -- (4.5, 1.5) -- (4.5, -0.5) -- (4, -1) -- (0, -1);
        \draw[thick, dashed] (0, -1) -- (0.5, -0.5) -- (4.5, -0.5);
        \draw[thick, dashed] (0.5, -0.5) -- (0.5, 1.5);
        \draw[thick, dashed] (1, -1) -- (1.5, -0.5) -- (1.5, 1.5);
        \draw[thick, blue] (1.37, 0.49) to[out=210, in=0] (0.9, -0.2);
        \draw[thick, blue] (0.7, -0.2) to[out=180, in=270] (0.75, 1.4);
        \draw[thick, blue] (1.4, 0.25) to[out=180, in=0] (1.25, 0.2);
        \draw[thick, blue] (1.1, 0.15) to[out=200, in=0] (0.9, 0.2) to[out=180, in=90] (0.8, -0.4) to[out=270, in=0] (0, -0.7);
        \draw[thick, violet] (3.1, -0.15) -- (2.1, 0.65);
        \draw[line width=1.2mm, green!60] (2, 0.49) -- (2.5, 0.49);
        \draw[line width=1.2mm, green!60] (2.5, 0.25) -- (2.7, 0.25);
        \draw[thick, blue] (1.37, 0.49) -- (4.37, 0.49);
        \draw[thick, blue] (1.4, 0.25) -- (3.65, 0.25);
        \draw[thick, blue] (4.15, 0.25) -- (4.4, 0.25);
        \draw[thick, violet] (4.1, 0.25) to[out=180, in=45] (3.6, 0.15) to[out=225, in=45] (3.3, -0.3) to[out=225, in=330] (3.1, -0.15);
        \draw[thick, violet] (2.1, 0.65) to[out=129, in=270] (1.8, 1.3);
        \draw[line width=1.3mm, green!60] (3.52, 0.2) to[out=150, in=0] (2.8, 0.4) to[out=180, in=90] (1.8, -0.9);
        \draw[thick, violet] (4.16, -0.07) to[out=180, in=330] (3.68, 0.1);
        \draw[thick, violet] (3.52, 0.2) to[out=150, in=0] (2.8, 0.4) to[out=180, in=90] (1.8, -1);
        \begin{scope}
            \clip (4, 0.3) -- (4.5, 0.8) -- (4.5, -0.5) -- (4, -1) -- (4, 0.3);
            \draw[ultra thick, red] (4.25, 0.25) ellipse (0.15 and 0.4);
            \path[very thick, red, tips, ->] (4.5, 0.1) -- (4.25, -0.15);
        \end{scope}
        \draw[thick] (0, 1) -- (4, 1);
        \draw[thick] (1, -1) -- (1, 1) -- (1.5, 1.5);
        \draw[thick] (4, -1) -- (4, 1) -- (4.5, 1.5);
        \node at (0.9, 1.8) {$M_K$};
        \node at (2.8, 1.8) {$\Sigma^\prime$};
    \end{tikzpicture}
    \caption[Extending stated endpoints in a knot complement]{Extending the endpoints of tangles (blue) in $M_K$ to end on the new boundary marking.}
    \label{fig:ExtComplementTangles}
\end{figure}

In other words, we can identify the stated endpoints of $m$ on $\mathcal{N}_K$ as living on $b \times \{0\}$ and then ``pull'' these endpoints along the $[0,1]$ component of $\partial \Sigma^\prime \times [0,1]$ so that the endpoints are now on $b \times \{1\}$.

Define $\alpha \cdot D$ as the stated $\widetilde{\mathcal{N}}_K$-tangle diagram in $\mathscr{S}(\widetilde{M}_K, \widetilde{\mathcal{N}}_K)$ consisting of the union of $\Tilde{m}$ for each stated $\mathcal{N}_K$-tangle $m$ in $D$, along with the induced stated $\widetilde{\mathcal{N}}_K$-tangle diagram, $\iota_{K}^{\ast}(\alpha)$. Because $\alpha$ is in generic position and any of its endpoints lie on $b\left((\frac{1}{2}, 1-\epsilon)\right)$, there is no overlap in this union, ensuring that this is a well-defined element in $\mathscr{S}(\widetilde{M}_K, \widetilde{\mathcal{N}}_K)$.

It is clear that $\mathscr{S}\left(\widetilde{M}_K, \widetilde{\mathcal{N}}_K \right)$ is isomorphic to $\mathscr{S}(M_K, \mathcal{N}_K)$ as stated skein algebras. Consequently, $\alpha \cdot D$ induces a left $\mathscr{S}^{pb}(\Sigma^\prime)$-module structure on $\mathscr{S}(M_K)$, and therefore, a left $\mathscr{S}(T^2 \setminus D^2)$-module structure.

\subsection{The torus}\label{section:solidtorus}
Let $K \subset S^3$ be the unknot. Then $M_K = S^3 \setminus K$ is the solid torus, which is homeomorphic to the thickened annulus. Therefore, $\mathscr{S}(M_K)$ also has an algebra structure and its stated skein algebra (with one marking) is isomorphic to the stated skein algebra of the once punctured monogon. This specific skein algebra was discussed in \cite[Prop.~4.26]{CL22} and is related to Majid's transmutation of $\mathcal{O}_q(SL_2)$ \cite{Maj95}. As an algebra, it is generated by the elements $\{ v_{\scriptscriptstyle{+},\scriptscriptstyle{+}}, v_{\scriptscriptstyle{+},\scriptscriptstyle{-}}, v_{\scriptscriptstyle{-},\scriptscriptstyle{+}}, v_{\scriptscriptstyle{-},\scriptscriptstyle{-}} \}$ where
\begin{align*}
    v_{\mu,\nu} = \begin{tikzpicture}[baseline=-4]
        \draw[thick, fill=gray!40] (1, 0) ellipse (1 and 0.8);
        \draw[thick] (0.7, 0.2) to[out=300, in=240] (1.3, 0.2);
        \draw[thick, fill=white] (0.8, 0.1) to[out=60, in=120] (1.2, 0.1);
        \draw[fill=white] (0.8, 0.1) to[out=330, in=210] (1.2, 0.1);
        \draw[->] (1, -0.73) -- (1, -0.1);
        \draw[thick] (1, -0.3) to[out=180, in=270] (0.8, -0.15) to[out=90, in=180] (1, -0.05) to[out=0, in=270] (1.6, 0.15) to[out=90, in=0] (1, 0.5) to[out=180, in=90] (0.4, 0.1) to[out=270, in=180] (1, -0.6);
        \node at (1.16, -0.3) {$\scriptstyle{\nu}$};
        \node at (1.16, -0.6) {$\scriptstyle{\mu}$};
    \end{tikzpicture}
\end{align*}

\begin{lemma}
The following commutativity relations hold in this algebra:
\begin{align*}
    v_{-,-} v_{+,+} &= q^8 v_{+,+} v_{-,-} + q^{8}\left( q^2 - q^{-2} \right) v_{-,+}^{2} - q^{6}\left( q^2 - q^{-2} \right) v_{-,+}v_{+,-} - q^{5} \left( q^{4} - q^{-4} \right) \\
    v_{+,-} v_{-,+} &= v_{-,+} v_{+,-} \\
    v_{-,-} v_{-,+} &= q^4 v_{-,+} v_{-,-} \\
    v_{-,-} v_{+,-} &= v_{+,-} v_{-,-} + q^{4}\left( q^2 - q^{-2} \right) v_{-,-} v_{-,+} \\
    v_{-,+} v_{+,+} &= q^{4} v_{+,+} v_{-,+} \\
    v_{+,-} v_{+,+} &= v_{+,+} v_{+,-} + q^{4} \left( q^2 - q^{-2} \right) v_{+,+} v_{-,+}
\end{align*}
\end{lemma}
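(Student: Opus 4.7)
Since the monogon has a single marking on its boundary, each $v_{\mu,\nu}$ contributes two endpoints to that marking, the lower one labeled $\mu$ and the upper one $\nu$. Reading bottom-to-top, the product $v_{\mu_1,\nu_1}\cdot v_{\mu_2,\nu_2}$ gives the height-ordered list of states $(\mu_2,\nu_2,\mu_1,\nu_1)$ at the marking, while the reversed product $v_{\mu_2,\nu_2}\cdot v_{\mu_1,\nu_1}$ reads $(\mu_1,\nu_1,\mu_2,\nu_2)$. The underlying unstated tangle is the same in both cases, namely two nested copies of the generating arc wrapping once around the puncture, so every identity of the lemma will be obtained purely by permuting the four heights at the marking and absorbing the corrections back into the generating set.

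My plan is to carry out this reordering one transposition of adjacent endpoints at a time, using the height exchange relation $(R_6)$. Each transposition of an adjacent pair with bottom state $a$ and top state $b$ produces $q^{\pm 3}$ times the exchanged pair, plus a $(q^2-q^{-2})$-multiple of a cap with states $a,b$; this cap is nonzero exactly when $a\neq b$, in which case the cap, once spliced with one of the existing $v$-arcs, either closes off into a constant via the trivial-arc coefficients $C_{\pm}^{\mp}$ or recombines the two $v$-arcs into a product of two new $v$-monomials. A single ordered sequence of adjacent transpositions, for instance $(2,3),(1,2),(3,4),(2,3)$ with endpoints labeled $1,2,3,4$ from the bottom, will convert the first height-order into the second.

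For each of the six displayed identities I would apply this sequence and maintain a ledger of the accumulated $q$-weights and correction caps. The two monomial identities $v_{-,-}v_{-,+}=q^4 v_{-,+}v_{-,-}$ and $v_{-,+}v_{+,+}=q^4 v_{+,+}v_{-,+}$ involve only transpositions of matched states on at least one end, so every correction cap vanishes and one reads off the pure $q$-power. The two identities with a single $(q^2-q^{-2})$-term ($v_{-,-}v_{+,-}$ and $v_{+,-}v_{+,+}$) produce exactly one mixed cap, which recombines the two arcs into the listed $v_{-,-}v_{-,+}$ or $v_{+,+}v_{-,+}$ summand. The symmetry $v_{+,-}v_{-,+}=v_{-,+}v_{+,-}$ will follow from the observation that in this case the two mixed-state transpositions contribute caps that, after being spliced in, rebuild the opposite product with total coefficient $1$ and no surviving scalar term; the constants $C_{+}^{-}=-q^{-5/2}$ and $C_{-}^{+}=q^{-1/2}$ combine to give this cancellation. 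The $v_{-,-}v_{+,+}$ identity is the lengthiest: both transpositions are mixed, and the cap terms must themselves be re-expanded by one further application of $(R_6)$ before they collapse into the listed combination $v_{-,+}^2$, $v_{-,+}v_{+,-}$, and the scalar $-q^5(q^4-q^{-4})$.

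The main obstacle will be tracking the framing contribution: each time an endpoint is dragged past the self-crossing built into the definition of $v_{\mu,\nu}$ or past the puncture, a half-twist is created which contributes a factor of $q^{\pm 4}$ under the $(R_1)$ crossing resolution. The global $q^8$ weight and the precise combination $-q^5(q^4-q^{-4})$ in the $v_{-,-}v_{+,+}$ relation come from this framing accounting rather than from the height-exchange computation alone, and so the computation should be organised to isolate the framing twist as a separate bookkeeping step applied at the end, after all height exchanges have been resolved into monomials in the $v_{\mu,\nu}$'s.
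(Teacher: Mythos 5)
Your overall strategy---realize both products as the same underlying tangle with the four marking endpoints in two different height orders, then pass from one order to the other by adjacent exchanges while tracking the turnback corrections---is in substance exactly the computation the paper carries out in Appendix \ref{appendix:AnnCommRel} for the first relation (there phrased via the state exchange relation $(R_5)$ together with $(R_1)$--$(R_4)$ and the constants $C_\mu^\nu$, rather than via $(R_6)$; the two are interchangeable). So the route is the right one, and your transposition sequence $(2,3),(1,2),(3,4),(2,3)$ does realize the required block swap. The problems are in the coefficients and in one vanishing claim, and each would produce wrong answers if the plan were executed literally.

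First, the exchange of two adjacent endpoints carrying \emph{equal} states is not $q^{\pm 3}$ times the exchanged configuration: resolving the single crossing created by the swap gives $q^{\pm 1}$ times the unswapped diagram (the turnback dies by $(R_4)$), and the extra shift in the exponent appearing in $(R_6)$ is specific to the mixed $(+,-)$ case. A uniform $q^{\pm 3}$ per transposition cannot reproduce, for instance, the factor $q^{4}$ in $v_{-,-}v_{-,+}=q^{4}v_{-,+}v_{-,-}$. Second, your claim that in the two pure $q$-power identities ``every correction cap vanishes'' because the states are matched on one end is not correct: in the four transpositions each endpoint of one arc is swapped past each endpoint of the other exactly once, so for $v_{-,-}v_{-,+}$ (states $\{-,-\}$ against $\{-,+\}$) two of the four swaps are still mixed-state and produce a priori nonzero turnbacks. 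The spliced arc in each such turnback winds around the puncture, so it is not killed by the trivial arc relations; one must check that these two contributions cancel against each other, which is the actual content of the identity and cannot be waved away. Third, the framing discussion is off: a kink contributes $-q^{\pm 3}$ under $(R_1)$--$(R_2)$, not $q^{\pm 4}$, and in the paper's computation the global $q^{8}$ arises as $(q^{2})^{4}$ from four successive state exchanges, with the scalar $-q^{5}(q^{4}-q^{-4})$ coming from the trivial-knot and trivial-arc evaluations of the turnback terms; there is no separate framing correction to be applied at the end. None of these requires a new idea to repair, but as written the proposal does not yet establish the stated coefficients.
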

\begin{proof}
A diagrammatic proof for the first relation is provided in Appendix~\ref{appendix:AnnCommRel}. The calculations for the remaining relations are similar.
\end{proof}

Using Section~\ref{section:GenConst}, we see that
\begin{align*}
    X_{1,0}(\mu, \nu) \cdot f &= v_{\mu, \nu} \phantom{\cdot} f \\
    X_{2,0}(\mu, \nu) \cdot 1_K &= C_{\mu}^{\nu} \phantom{\cdot} 1_K \\
    X_{3,0}(\mu, \nu) \cdot 1_K &= -q^{-3} \phantom{\cdot} v_{\mu, \nu}
\end{align*}
where $1_K$ is the empty link (the identity) in $\mathscr{S}(M_K)$, $f \in \mathscr{S}(M_K)$, and $C_{\mu}^{\nu} =
\resizebox{0.6\width}{!}{\begin{tikzpicture}[baseline=-3]
    \draw[gray!40, thick, fill=gray!40, domain=-45:225] plot ({cos(\x)}, {sin(\x)}) to[out=45, in=130] (0.71, -0.71);
    \draw[thick] (-0.71, -0.71) to[out=45, in=135] (0.71, -0.71);
    \node[draw, circle, inner sep=0pt, minimum size=4pt, fill=white] (p1) at (0,-0.41) {};
    \draw[thick] (p1) to[out=45, in=-120] (0.31,0) to[out=60, in=0] (0,0.45) to[out=180, in=120] (-0.31,0) to[out=-60, in=150] (-0.15,-0.2);
    \node (s1) at (-0.6,-0.3) {$\mu$};
    \node (s2) at (0.6,-0.3) {$\nu$};
\end{tikzpicture}}$. Unlike in the Kauffman bracket case (see \cite{BS16}), the action of $X_{2,0}(\mu, \nu)$ is not diagonalizable. For example,
$$X_{2,0}(-,-) \cdot v_{+,-} = q^{-5/2}(q^2 - q^{-2}) v_{-,-}$$
and so the action is more complicated.

Once again, let $1_K$ be the empty link diagram in $\mathscr{S}(M_K)$ and $f \in \mathscr{S}(M_K)$. Denote $\partial_K$ as the closed curve, parallel to the boundary in $M_K$. 

\begin{lemma} We have the following identities in $\mathscr{S}(M_K)$:
\begin{align*}
    Y_1 \cdot f &= \partial_K \phantom{\cdot} f \\
    Y_2 \cdot 1_K &= \left( -q^2-q^{-2} \right) 1_K \\
    Y_2 \cdot v_{\mu, \nu} &= \left( -q^{4} - q^{-4} \right) v_{\mu, \nu} \\
    Y_3 \cdot 1_K &= -q^{-3} \phantom{\cdot} \partial_K \\
    \partial \cdot 1_K &= \left( -q^2-q^{-2} \right) 1_K \\
    \partial \cdot v_{\mu, \nu} &= \left( -q^6 - q^{-6} \right) v_{\mu, \nu} - (q^2 - q^{-2})^2 C_{\mu}^{\nu} \partial_K
\end{align*}
\end{lemma}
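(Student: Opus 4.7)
The strategy is to push each element of $\mathscr{S}(T^2 \setminus D^2)$ into $\mathscr{S}(M_K)$ via the peripheral embedding described in Section \ref{section:GenConst}, then simplify using topological isotopy in the solid torus together with the local skein relations $(R_1)$--$(R_5)$. Because $K$ is the unknot, $M_K$ is a solid torus, so the two generating cycles of $\partial M_K$ split into a null-homotopic one (the longitude of $K$, which bounds a meridional disk) and one carrying the generator of $\pi_1(M_K)$ (the meridian of $K$, isotopic to the core). The identification $Y_2 \cdot 1_K = (-q^2 - q^{-2}) 1_K$ in (2) is precisely what fixes the convention: $Y_2$ is the longitude of $K$.

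For the closed-curve identities (1), (2), (4), (5) the arguments are essentially topological. In (1), $Y_1$ has no endpoints, so its action on $f$ coincides with stacking in $\mathscr{S}(M_K)$; since $Y_1$ pushes to the core of the solid torus, which is isotopic to the boundary-parallel curve $\partial_K$, we obtain $Y_1 \cdot f = \partial_K f$. In (2), $Y_2$ pushes to a trivial knot disjoint from $1_K$, and $(R_2)$ yields $-q^2 - q^{-2}$. In (4), the $(1,1)$-curve $Y_3$ is isotopic in $M_K$ to $Y_1$ with one framing twist: the longitudinal component of the $(1,1)$-curve retracts over its bounding meridional disk, transferring its monodromy to the blackboard framing, and the Kauffman framing correction contributes $-q^{-3}$. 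In (5), the loop $\partial$ is null-homotopic on $\partial M_K$ and disjoint from $1_K$, so again reduces to a trivial unknot via $(R_2)$.

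Identity (3) requires a genuine skein calculation but of a familiar shape. The generator $v_{\mu,\nu}$ contains a single arc wrapping once around the core, and $Y_2$ pushes to a meridional disk-bounding loop whose bounding disk is pierced exactly once by that arc. The resulting two-crossing Hopf-link pattern is resolved by applying $(R_1)$ at both crossings; the ``split'' resolution produces a trivial knot (contributing $-q^2-q^{-2}$) disjoint from $v_{\mu,\nu}$, while the ``merge'' resolution reconnects to give back $v_{\mu,\nu}$ up to isotopy. Collecting the coefficients (with the two crossing weights $q$, $q^{-1}$) yields the Kauffman bracket Hopf-link factor $-q^4 - q^{-4}$ multiplying $v_{\mu,\nu}$, with cross terms cancelling after isotopy.

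Identity (6) is the main obstacle. The cleanest approach is to push $\partial$ directly: it becomes a small loop on $\partial M_K$ encircling the marking strip, so when stacked above $v_{\mu,\nu}$ it surrounds the two extended endpoints of $v_{\mu,\nu}$, together with the two core-wrapping strands emerging from them. After isotoping the loop slightly into the interior, one applies $(R_1)$ to resolve the crossings with the two core-wrapping strands: the ``split-split'' resolution is a two-strand version of (3) and contributes the scalar $-q^6 - q^{-6}$ times $v_{\mu,\nu}$ (the Kauffman bracket of a meridian around two cabled strands); the ``merge-merge'' term pinches the loop onto the stated endpoints, and by $(R_5)$ together with the trivial arc relations $(R_3)$, $(R_4)$ this pinching extracts a trivial stated arc evaluating to $C_\mu^\nu$, while the residual closed curve, freed of its endpoints, isotopes to the boundary-parallel curve $\partial_K$; the two intermediate $(R_1)$ applications contribute the coefficient $(q^2-q^{-2})^2$ with the sign as given. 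Assembling these contributions produces the stated formula. The hardest step is verifying the precise cancellations in the mixed ``split-merge'' terms, which requires a careful local-diagram check near the marking strip and is best carried out pictorially in the style of the proof of Proposition~\ref{prop:embedding}.
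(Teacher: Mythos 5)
Your proposal is correct in all six final values and follows exactly the route the authors intend (the paper gives no argument here beyond ``short diagrammatic computations that we omit''): push each curve through the collar, isotope to a standard position in the solid torus, and evaluate with the skein and trivial-arc relations. The encirclement eigenvalues you use are the right ones: a meridian disk boundary around one strand acts by $-q^{4}-q^{-4}$, and for (6) the correct identity is that a loop around two \emph{parallel} strands equals
\begin{equation*}
-\left(q^{6}+q^{-6}\right)\mathrm{id}_{2}-\left(q^{2}-q^{-2}\right)^{2}\,\cup\cap,
\end{equation*}
which, applied to $v_{\mu,\nu}$, returns $v_{\mu,\nu}$ from the identity term and $C_{\mu}^{\nu}\,\partial_K$ from the turnback term (the cup is the trivial stated arc at the marking, the cap closes the arc into a core-parallel curve). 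Two refinements. First, in (6) you attribute the scalar $-q^{6}-q^{-6}$ to the ``split--split'' resolutions; that scalar is really the eigenvalue on the Jones--Wenzl projector $e_{2}$, not on $\mathrm{id}_{2}$, and the painless way to get both coefficients at once is to write $\mathrm{id}_{2}=e_{2}+\delta^{-1}\cup\cap$ with $\delta=-q^{2}-q^{-2}$ and use the known eigenvalues $-q^{6}-q^{-6}$ on $e_{2}$ and $\delta$ on the turnback; this avoids the delicate bookkeeping of the sixteen mixed resolution terms you defer to a pictorial check. Second, in (4) be aware that the isotopy taking the $(1,1)$-curve to the core sweeps across a meridian disk of the solid torus, so the identity $Y_{3}\cdot f=-q^{-3}\partial_K f$ holds only for $f=1_K$ (unlike $Y_{1}$, whose isotopy to $\partial_K$ is supported in a boundary collar); this is why the lemma states only the $1_K$ case, and it also means one cannot shortcut (6) by substituting the earlier identities into $\partial=qY_1Y_2Y_3-q^{2}Y_1^{2}-q^{-2}Y_2^{2}-q^{2}Y_3^{2}+q^{2}+q^{-2}$. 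The sign $-q^{-3}$ (rather than $-q^{3}$) in (4) should be pinned down by checking consistency with the formula $X_{3,0}(\mu,\nu)\cdot 1_K=-q^{-3}v_{\mu,\nu}$ stated just before the lemma, which uses the same framed isotopy.
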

\begin{proof}
    These are all short diagrammatic computations that we omit.
\end{proof}

\appendix

\section{Appendix: Diagrammatic Calculations}\label{appendix:DiagCalc} 
Throughout all of these calculation, we use positive integers placed at the bottom of the diagrams to indicate the relative height ordering of the tangle endpoints, where larger values correspond to lower heights. For each diagram, starting from the leftmost endpoint moving clockwise with respect to our marked point, we assign these integers to the endpoints. The integers are read from left to right at the bottom, corresponding to this clockwise order. For example, $X_{1,0}(-,-)X_{2,0}(+,+) = \begin{tikzpicture}[scale=0.9, baseline=-1]
    \MarkedTorusBackground[4][2][][][3][1]
    \draw[thick] (0, 0.15) -- (1, 0.15) -- (2, 0.15);
    \draw[thick] (0.7, -1) to[out=90, in=270] (0.7, 0) to[out=90, in=180] (1, 0.15) to[out=120, in=270] (0.7, 1);
    \node[draw, circle, inner sep=0pt, minimum size=3pt, fill=white] at (1, 0.15) {};
    \node at (0.83, -0.25) {\footnotesize{$+$}};
    \node at (0.7, 0.28) {\footnotesize{$-$}};
    \node at (1.05, 0.4) {\footnotesize{$+$}};
    \node at (1.3, 0) {\footnotesize{$-$}};
\end{tikzpicture}$ where our heights correspond to
\begin{tikzpicture}[scale=0.8, baseline=-1]
    \draw[gray!40, thick, fill=gray!40, domain=-45:225] plot ({cos(\x)}, {sin(\x)}) to[out=45, in=130] (0.71, -0.71);
    \draw[thick] (-0.71, -0.71) to[out=45, in=135] (0.71, -0.71);
    \node[draw, circle, inner sep=0pt, minimum size=4pt, fill=white] (p1) at (0,-0.41) {};
    \draw[thick] (p1) -- (-0.93, -0.37);
    \draw[thick] (p1) -- (-0.91, 0.41);
    \draw[thick] (p1) -- (-0.2, 0.98);
    \draw[thick] (p1) -- (0.88,0.48);
    \node at (-0.7, -0.2) {\footnotesize{4}};
    \node at (-0.5, 0.3) {\footnotesize{2}};
    \node at (0, 0.4) {\footnotesize{3}};
    \node at (0.6, -0.1) {\footnotesize{1}};
\end{tikzpicture}.
Additionally, since all states are positive, we omit the states from the diagrams for simplicity.

\subsection{$\varphi_{\mathcal{E}}\left(X_{1, \frac{1}{2}}(+,+)\right)$}
\begin{align*}
    \psi_{\mathcal{E}}(x_2 x_3)
    \begin{tikzpicture}[baseline=-1]
        \MarkedTorusBackground[2][1]
        \draw[thick] (2, 0.15) -- (1, 0.15) to[out=0, in=60] (1.3, -0.3) to[out=240, in=0] (1,-0.45) to[out=180, in=0] (0, 0.15);
        \node[draw, circle, inner sep=0pt, minimum size=3pt, fill=white] at (1, 0.15) {};
    \end{tikzpicture}
    &= q^{-1} \left(
    \begin{tikzpicture}[baseline=-1]
        \MarkedTorusBackground[2][1]
        \draw[thick] (0.7, -1) to[out=90, in=270] (0.7, 0) to[out=90, in=180] (1, 0.15) to[out=120, in=270] (0.7, 1);
        \node[draw, circle, inner sep=0pt, minimum size=3pt, fill=white] at (1, 0.15) {};
    \end{tikzpicture}
    \begin{tikzpicture}[baseline=-1]
        \MarkedTorusBackground[2][1]
        \draw[thick] (0.5, -1) -- (0.5, -0.2) to[out=90, in=180] (1, 0.15) to[out=45, in=180] (2, 0.5);
        \draw[thick] (0, 0.5) to[out=0, in=270] (0.5, 1);
        \node[draw, circle, inner sep=0pt, minimum size=3pt, fill=white] at (1, 0.15) {};
    \end{tikzpicture}
    \right)
    \begin{tikzpicture}[baseline=-1]
        \MarkedTorusBackground[2][1]
        \draw[thick] (2, 0.15) -- (1, 0.15) to[out=0, in=60] (1.3, -0.3) to[out=240, in=0] (1,-0.45) to[out=180, in=0] (0, 0.15);
        \node[draw, circle, inner sep=0pt, minimum size=3pt, fill=white] at (1, 0.15) {};
    \end{tikzpicture}\\
    &= q^{-1}
    \begin{tikzpicture}[baseline=-1]
        \MarkedTorusBackground[2][1]
        \draw[thick] (0.7, -1) to[out=90, in=270] (0.7, 0) to[out=90, in=180] (1, 0.15) to[out=120, in=270] (0.7, 1);
        \node[draw, circle, inner sep=0pt, minimum size=3pt, fill=white] at (1, 0.15) {};
    \end{tikzpicture}
    \begin{tikzpicture}[baseline=-1]
        \MarkedTorusBackground[2][1][][][4][3]
        \draw[thick] (2, 0.15) -- (1, 0.15) to[out=0, in=60] (1.3, -0.3) to[out=240, in=0] (1,-0.45) to[out=180, in=0] (0, 0.15);
        \draw[line width=3mm, gray!40] (0.5, -0.8) -- (0.5, -0.2) to[out=90, in=180] (1, 0.15);
        \draw[thick] (0.5, -1) -- (0.5, -0.2) to[out=90, in=180] (1, 0.15) to[out=45, in=180] (2, 0.5);
        \draw[thick] (0, 0.5) to[out=0, in=270] (0.5, 1);
        \draw[thick, black, fill=white] (1, 0) circle (0.15);
        \node[draw, circle, inner sep=0pt, minimum size=3pt, fill=white] at (1, 0.15) {};
    \end{tikzpicture}\\
    &= q^{-1}
    \begin{tikzpicture}[baseline=-1]
        \MarkedTorusBackground[2][1]
        \draw[thick] (0.7, -1) to[out=90, in=270] (0.7, 0) to[out=90, in=180] (1, 0.15) to[out=120, in=270] (0.7, 1);
        \node[draw, circle, inner sep=0pt, minimum size=3pt, fill=white] at (1, 0.15) {};
    \end{tikzpicture}
    \left( q
    \begin{tikzpicture}[baseline=-1]
        \MarkedTorusBackground[2][1][][][4][3]
        \draw[thick] (1.5, -1) to[out=90, in=270] (1.5, 0) to[out=90, in=0] (1, 0.15) to[out=60, in=270] (1.5, 1);
        \draw[thick] (1, 0.15) to[out=0, in=90] (1.3, -0.2) to[out=270, in=0] (1, -0.5) to[out=180, in=270] (0.7, -0.2) to[out=90, in=180] (1, 0.15);
        \node[draw, circle, inner sep=0pt, minimum size=3pt, fill=white] at (1, 0.15) {};
    \end{tikzpicture}
    + q^{-1}
    \begin{tikzpicture}[baseline=-1]
        \MarkedTorusBackground[2][1][][][4][3]
        \draw[thick] (2, 1) -- (1, 0.15) to[out=0, in=60] (1.3, -0.3) to[out=240, in=0] (1,-0.45) to[out=180, in=45] (0, -1);
        \draw[thick] (0, 0.15) -- (1, 0.15) -- (2, 0.15);
        \node[draw, circle, inner sep=0pt, minimum size=3pt, fill=white] at (1, 0.15) {};
    \end{tikzpicture}
    \right)\\
    &=
    \begin{tikzpicture}[baseline=-1]
        \MarkedTorusBackground[4][2][1][3][6][5]
        \draw[thick] (1.5, -1) to[out=90, in=270] (1.5, 0) to[out=90, in=0] (1, 0.15) to[out=60, in=270] (1.5, 1);
        \draw[thick] (1, 0.15) to[out=0, in=90] (1.3, -0.2) to[out=270, in=0] (1, -0.5) to[out=180, in=270] (0.7, -0.2) to[out=90, in=180] (1, 0.15);
        \draw[thick] (0.5, -1) to[out=90, in=270] (0.5, 0) to[out=90, in=180] (1, 0.15) to[out=120, in=270] (0.5, 1);
        \node[draw, circle, inner sep=0pt, minimum size=3pt, fill=white] at (1, 0.15) {};
    \end{tikzpicture}
    + q^{-2}
    \begin{tikzpicture}[baseline=-1]
        \MarkedTorusBackground[2][4][1][3][6][5]
        \draw[thick] (2, 1) -- (1, 0.15) to[out=0, in=60] (1.3, -0.3) to[out=240, in=0] (1,-0.45) to[out=180, in=45] (0, -1);
        \draw[thick] (0, 0.15) -- (1, 0.15) -- (2, 0.15);
        \draw[line width=3mm, gray!40] (0.7, -0.8) to[out=90, in=270] (0.7, 0);
        \draw[thick] (0.7, -1) to[out=90, in=270] (0.7, 0) to[out=90, in=180] (1, 0.15) to[out=120, in=270] (0.7, 1);
        \node[draw, circle, inner sep=0pt, minimum size=3pt, fill=white] at (1, 0.15) {};
    \end{tikzpicture}\\
    &=
    \begin{tikzpicture}[baseline=-1]
        \MarkedTorusBackground[4][2][1][3][6][5]
        \draw[thick] (1.5, -1) to[out=90, in=270] (1.5, 0) to[out=90, in=0] (1, 0.15) to[out=60, in=270] (1.5, 1);
        \draw[thick] (1, 0.15) to[out=0, in=90] (1.3, -0.2) to[out=270, in=0] (1, -0.5) to[out=180, in=270] (0.7, -0.2) to[out=90, in=180] (1, 0.15);
        \draw[thick] (0.5, -1) to[out=90, in=270] (0.5, 0) to[out=90, in=180] (1, 0.15) to[out=120, in=270] (0.5, 1);
        \node[draw, circle, inner sep=0pt, minimum size=3pt, fill=white] at (1, 0.15) {};
    \end{tikzpicture}
    + q^{-1}
    \begin{tikzpicture}[baseline=-1]
        \MarkedTorusBackground[2][4][1][3][6][5]
        \draw[thick] (0, 0.15) -- (1, 0.15) -- (2, 0.15);
        \draw[thick] (0, 0.5) to[out=0, in=120] (1, 0.15) to[out=60, in=180] (2, 0.5);
        \draw[thick] (1, 0.15) to[out=0, in=90] (1.3, -0.2) to[out=270, in=0] (1, -0.5) to[out=180, in=270] (0.7, -0.2) to[out=90, in=180] (1, 0.15);
        \node[draw, circle, inner sep=0pt, minimum size=3pt, fill=white] at (1, 0.15) {};
    \end{tikzpicture}
    + q^{-3}
    \begin{tikzpicture}[baseline=-1]
        \MarkedTorusBackground[2][4][1][3][6][5]
        \draw[thick] (0, 0.15) -- (1, 0.15) -- (2, 0.15);
        \draw[thick] (1.3, -1) to[out=90, in=270] (1.3, 0) to[out=90, in=0] (1, 0.15) to[out=60, in=270] (1.3, 1);
        \draw[thick] (0.5, -1) -- (0.5, -0.2) to[out=90, in=180] (1, 0.15) to[out=45, in=180] (2, 0.5);
        \draw[thick] (0, 0.5) to[out=0, in=270] (0.5, 1);
        \node[draw, circle, inner sep=0pt, minimum size=3pt, fill=white] at (1, 0.15) {};
    \end{tikzpicture}\\
    &= q^{-2}
    \begin{tikzpicture}[baseline=-1]
        \MarkedTorusBackground[6][2][1][4][3][5]
        \draw[thick] (1.5, -1) to[out=90, in=270] (1.5, 0) to[out=90, in=0] (1, 0.15) to[out=60, in=270] (1.5, 1);
        \draw[thick] (1, 0.15) to[out=0, in=90] (1.3, -0.2) to[out=270, in=0] (1, -0.5) to[out=180, in=270] (0.7, -0.2) to[out=90, in=180] (1, 0.15);
        \draw[thick] (0.5, -1) to[out=90, in=270] (0.5, 0) to[out=90, in=180] (1, 0.15) to[out=120, in=270] (0.5, 1);
        \node[draw, circle, inner sep=0pt, minimum size=3pt, fill=white] at (1, 0.15) {};
    \end{tikzpicture}
    + q^{-6}
    \begin{tikzpicture}[baseline=-1]
        \MarkedTorusBackground[6][4][2][1][3][5]
        \draw[thick] (0, 0.15) -- (1, 0.15) -- (2, 0.15);
        \draw[thick] (0, 0.5) to[out=0, in=120] (1, 0.15) to[out=60, in=180] (2, 0.5);
        \draw[thick] (1, 0.15) to[out=0, in=90] (1.3, -0.2) to[out=270, in=0] (1, -0.5) to[out=180, in=270] (0.7, -0.2) to[out=90, in=180] (1, 0.15);
        \node[draw, circle, inner sep=0pt, minimum size=3pt, fill=white] at (1, 0.15) {};
    \end{tikzpicture}
    + q^{-7}
    \begin{tikzpicture}[baseline=-1]
        \MarkedTorusBackground[4][2][6][3][1][5]
        \draw[thick] (0, 0.15) -- (1, 0.15) -- (2, 0.15);
        \draw[thick] (1.3, -1) to[out=90, in=270] (1.3, 0) to[out=90, in=0] (1, 0.15) to[out=60, in=270] (1.3, 1);
        \draw[thick] (0.5, -1) -- (0.5, -0.2) to[out=90, in=180] (1, 0.15) to[out=45, in=180] (2, 0.5);
        \draw[thick] (0, 0.5) to[out=0, in=270] (0.5, 1);
        \node[draw, circle, inner sep=0pt, minimum size=3pt, fill=white] at (1, 0.15) {};
    \end{tikzpicture}\\
    \Rightarrow \varphi_{\mathcal{E}}(X_{1,\frac{1}{2}})
    &= (x_2 x_3)^{-1} (x_2 x_3) \varphi_{\mathcal{E}}\left( X_{1,\frac{1}{2}} \right)\\
    &= (x_2 x_3)^{-1} \varphi_{\mathcal{E}}\left( \psi_{\mathcal{E}} (x_2 x_3) X_{1,\frac{1}{2}} \right)\\
    &= x_3^{-1}x_2^{-1} \varphi_{\mathcal{E}} \left( q^{-2}x_2x_4x_5 + q^{-6}x_1^2x_5 + q^{-7}x_1x_3x_4 \right)\\
    &= q^{-1/2}x_3^{-1}x_4x_5 + q^{11/2}x_1^{2}x_2^{-1}x_3^{-1}x_5 + q^{1/2}x_1x_2^{-1}x_4.
\end{align*}

\subsection{$\varphi_{\mathcal{E}}\left(X_{1,-\frac{1}{2}}\right)$}
\begin{align*}
    \psi_{\mathcal{E}}(x_4) \cdot
    \begin{tikzpicture}[baseline=-1]
        \MarkedTorusBackground[2][1]
        \draw[thick] (0, 0.15) -- (1, 0.15) to[out=180, in=120] (0.7, -0.3) to[out=300, in=180] (1,-0.45) to[out=0, in=180] (2, 0.15);
        \node[draw, circle, inner sep=0pt, minimum size=3pt, fill=white] at (1, 0.15) {};
    \end{tikzpicture}
    &= q^{-1/2}
    \begin{tikzpicture}[baseline=-1]
        \MarkedTorusBackground[2][1]
        \draw[thick] (0, 0.15) -- (1, 0.15) to[out=180, in=120] (0.7, -0.3) to[out=300, in=180] (1,-0.45) to[out=0, in=180] (2, 0.15);
        \draw[line width=2mm, gray!40] (1.3, -0.5) -- (1.3, 0);
        \draw[thick] (1.3, -1) to[out=90, in=270] (1.3, 0) to[out=90, in=0] (1, 0.15) to[out=60, in=270] (1.3, 1);
        \node[draw, circle, inner sep=0pt, minimum size=3pt, fill=white] at (1, 0.15) {};
    \end{tikzpicture} \\
    &= q^{1/2}
    \begin{tikzpicture}[baseline=-1]
        \MarkedTorusBackground[4][3][][][2][1]
        \draw[thick] (0.7, -1) to[out=90, in=270] (0.7, 0) to[out=90, in=180] (1, 0.15) to[out=120, in=270] (0.7, 1);
        \draw[thick] (0, 0.15) -- (2, 0.15);
        \node[draw, circle, inner sep=0pt, minimum size=3pt, fill=white] at (1, 0.15) {};
    \end{tikzpicture} + q^{-3/2}
    \begin{tikzpicture}[baseline=-1]
        \MarkedTorusBackground[4][3][][][2][1]
        \draw[thick] (1, 0.15) to[out=0, in=90] (1.3, -0.2) to[out=270, in=0] (1, -0.5) to[out=180, in=270] (0.7, -0.2) to[out=90, in=180] (1, 0.15);
        \draw[thick] (0.5, -1) -- (0.5, -0.2) to[out=90, in=180] (1, 0.15) to[out=45, in=180] (2, 0.5);
        \draw[thick] (0, 0.5) to[out=0, in=270] (0.5, 1);
        \node[draw, circle, inner sep=0pt, minimum size=3pt, fill=white] at (1, 0.15) {};
    \end{tikzpicture} \\
    &= q^{3/2}
    \begin{tikzpicture}[baseline=-1]
        \MarkedTorusBackground[4][2][][][3][1]
        \draw[thick] (0.7, -1) to[out=90, in=270] (0.7, 0) to[out=90, in=180] (1, 0.15) to[out=120, in=270] (0.7, 1);
        \draw[thick] (0, 0.15) -- (2, 0.15);
        \node[draw, circle, inner sep=0pt, minimum size=3pt, fill=white] at (1, 0.15) {};
    \end{tikzpicture} + q^{1/2}
    \begin{tikzpicture}[baseline=-1]
        \MarkedTorusBackground[4][2][][][1][3]
        \draw[thick] (1, 0.15) to[out=0, in=90] (1.3, -0.2) to[out=270, in=0] (1, -0.5) to[out=180, in=270] (0.7, -0.2) to[out=90, in=180] (1, 0.15);
        \draw[thick] (0.5, -1) -- (0.5, -0.2) to[out=90, in=180] (1, 0.15) to[out=45, in=180] (2, 0.5);
        \draw[thick] (0, 0.5) to[out=0, in=270] (0.5, 1);
        \node[draw, circle, inner sep=0pt, minimum size=3pt, fill=white] at (1, 0.15) {};
    \end{tikzpicture} \\
    \Rightarrow \varphi_{\mathcal{E}}(X_{1,-\frac{1}{2}}(+,+))
    &= x_4^{-1}\left( q^{5/2} x_1 x_2 + q^{3/2} x_3 x_5 \right) \\
    &= q^{-7/2} x_1 x_2 x_4^{-1} + q^{-1/2} x_3 x_4^{-1} x_5
\end{align*}

\subsection{$v_{-,-} v_{+,+}$ Commutativity Relation}\label{appendix:AnnCommRel}
\begin{align*}
    v_{-,-} v_{+,+} =
    \begin{tikzpicture}[baseline=-1, scale=0.97]
        \PuncturedMonogonBackground
        \draw[thick] (-0.7, 0.7) to[out=-30, in=160] (0, 0.4) to[out=-20, in=90] (0.4, 0) to[out=270, in=20] (0, -0.4) to[out=200, in=30] (-0.7, -0.7);
        \draw[line width=2.5mm, gray!40] (0.1, 0.4) to[out=200, in=90] (-0.4, 0) to[out=270, in=160] (0.1, -0.4);
        \draw[thick] (0.7, 0.7) to[out=210, in=20] (0, 0.4) to[out=-160, in=90] (-0.4, 0) to[out=270, in=160] (0, -0.4) to[out=-20, in=150] (0.7, -0.7);
        \draw[thick, black, fill=white] (0, 1) circle (0.1);
        \node at (-0.85, 0.85) {$+$};
        \node at (-0.85, -0.85) {$+$};
        \node at (0.85, 0.85) {$-$};
        \node at (0.85, -0.85) {$-$};
    \end{tikzpicture}
    &= q^2 \begin{tikzpicture}[baseline=-1, scale=0.97]
        \PuncturedMonogonBackground
        \draw[thick] (-0.7, 0.7) to[out=-30, in=160] (0, 0.4) to[out=-20, in=90] (0.4, 0) to[out=270, in=20] (0, -0.4) to[out=200, in=30] (-0.7, -0.7);
        \draw[line width=2.5mm, gray!40] (0.1, 0.4) to[out=200, in=90] (-0.4, 0) to[out=270, in=160] (0.1, -0.4);
        \draw[thick] (0.7, 0.7) to[out=210, in=20] (0, 0.4) to[out=-160, in=90] (-0.4, 0) to[out=270, in=160] (0, -0.4) to[out=-20, in=150] (0.7, -0.7);
        \draw[thick, black, fill=white] (0, 1) circle (0.1);
        \node at (-0.85, 0.85) {$+$};
        \node at (-0.85, -0.85) {$-$};
        \node at (0.85, 0.85) {$-$};
        \node at (0.85, -0.85) {$+$};
    \end{tikzpicture}
    - q^{5/2} \begin{tikzpicture}[baseline=-1, scale=0.97]
        \PuncturedMonogonBackground
        \draw[thick] (-0.7, 0.7) to[out=-30, in=160] (0, 0.4) to[out=-20, in=90] (0.4, 0) to[out=270, in=0] (0, -0.4) to[out=180, in=270] (-0.4, 0);
        \draw[line width=2.5mm, gray!40] (0.6, 0.6) to[out=210, in=20] (0, 0.4) to[out=200, in=90] (-0.4, 0);
        \draw[thick] (-0.4, 0) to[out=90, in=200] (0, 0.4) to[out=20, in=210] (0.7, 0.7);
        \draw[thick, black, fill=white] (0, 1) circle (0.1);
        \node at (-0.85, 0.85) {$+$};
        \node at (0.85, 0.85) {$-$};
    \end{tikzpicture} \\
    &= q^2 \begin{tikzpicture}[baseline=-1, scale=0.97]
        \PuncturedMonogonBackground
        \draw[thick] (-0.7, 0.7) to[out=-30, in=160] (0, 0.4) to[out=-20, in=90] (0.4, 0) to[out=270, in=20] (0, -0.4) to[out=200, in=30] (-0.7, -0.7);
        \draw[line width=2.5mm, gray!40] (0.1, 0.4) to[out=200, in=90] (-0.4, 0) to[out=270, in=160] (0.1, -0.4);
        \draw[thick] (0.7, 0.7) to[out=210, in=20] (0, 0.4) to[out=-160, in=90] (-0.4, 0) to[out=270, in=160] (0, -0.4) to[out=-20, in=150] (0.7, -0.7);
        \draw[thick, black, fill=white] (0, 1) circle (0.1);
        \node at (-0.85, 0.85) {$+$};
        \node at (-0.85, -0.85) {$-$};
        \node at (0.85, 0.85) {$-$};
        \node at (0.85, -0.85) {$+$};
    \end{tikzpicture}
    - q^{3/2} \begin{tikzpicture}[baseline=-1, scale=0.97]
        \PuncturedMonogonBackground
        \draw[thick] (-0.7, 0.7) to[out=-30, in=210] (0.7, 0.7);
        \draw[thick] (0, 0) circle (0.35);
        \draw[thick, black, fill=white] (0, 1) circle (0.1);
        \node at (-0.85, 0.85) {$+$};
        \node at (0.85, 0.85) {$-$};
    \end{tikzpicture}
    - q^{7/2} \begin{tikzpicture}[baseline=-1, scale=0.97]
        \PuncturedMonogonBackground
        \draw[thick] (-0.7, -0.7) to[out=30, in=150] (0.7, -0.7);
        \draw[thick, black, fill=white] (0, 1) circle (0.1);
        \node at (-0.85, -0.85) {$+$};
        \node at (0.85, -0.85) {$-$};
    \end{tikzpicture} \\
    &= q^4 \begin{tikzpicture}[baseline=-1, scale=0.97]
        \PuncturedMonogonBackground
        \draw[thick] (-0.7, 0.7) to[out=-30, in=160] (0, 0.4) to[out=-20, in=90] (0.4, 0) to[out=270, in=20] (0, -0.4) to[out=200, in=30] (-0.7, -0.7);
        \draw[line width=2.5mm, gray!40] (0.1, 0.4) to[out=200, in=90] (-0.4, 0) to[out=270, in=160] (0.1, -0.4);
        \draw[thick] (0.7, 0.7) to[out=210, in=20] (0, 0.4) to[out=-160, in=90] (-0.4, 0) to[out=270, in=160] (0, -0.4) to[out=-20, in=150] (0.7, -0.7);
        \draw[thick, black, fill=white] (0, 1) circle (0.1);
        \node at (-0.85, 0.85) {$+$};
        \node at (-0.85, -0.85) {$-$};
        \node at (0.85, 0.85) {$+$};
        \node at (0.85, -0.85) {$-$};
    \end{tikzpicture}
    + q^{3/2} \begin{tikzpicture}[baseline=-1, scale=0.97]
        \PuncturedMonogonBackground
        \draw[thick] (-0.7, 0.7) to[out=-30, in=210] (0.7, 0.7);
        \draw[thick] (0, 0) circle (0.35);
        \draw[thick, black, fill=white] (0, 1) circle (0.1);
        \node at (-0.85, 0.85) {$+$};
        \node at (0.85, 0.85) {$-$};
    \end{tikzpicture}
    - q^{3/2} \begin{tikzpicture}[baseline=-1, scale=0.97]
        \PuncturedMonogonBackground
        \draw[thick] (-0.7, 0.7) to[out=-30, in=210] (0.7, 0.7);
        \draw[thick] (0, 0) circle (0.35);
        \draw[thick, black, fill=white] (0, 1) circle (0.1);
        \node at (-0.85, 0.85) {$+$};
        \node at (0.85, 0.85) {$-$};
    \end{tikzpicture} + q \\
    &= q^6 \begin{tikzpicture}[baseline=-1, scale=0.97]
        \PuncturedMonogonBackground
        \draw[thick] (-0.7, 0.7) to[out=-30, in=160] (0, 0.4) to[out=-20, in=90] (0.4, 0) to[out=270, in=20] (0, -0.4) to[out=200, in=30] (-0.7, -0.7);
        \draw[line width=2.5mm, gray!40] (0.1, 0.4) to[out=200, in=90] (-0.4, 0) to[out=270, in=160] (0.1, -0.4);
        \draw[thick] (0.7, 0.7) to[out=210, in=20] (0, 0.4) to[out=-160, in=90] (-0.4, 0) to[out=270, in=160] (0, -0.4) to[out=-20, in=150] (0.7, -0.7);
        \draw[thick, black, fill=white] (0, 1) circle (0.1);
        \node at (-0.85, 0.85) {$-$};
        \node at (-0.85, -0.85) {$+$};
        \node at (0.85, 0.85) {$+$};
        \node at (0.85, -0.85) {$-$};
    \end{tikzpicture}
    + q^{7/2} \begin{tikzpicture}[baseline=-1, scale=0.97]
        \PuncturedMonogonBackground
        \draw[thick] (-0.7, 0.7) to[out=-30, in=210] (0.7, 0.7);
        \draw[thick] (0, 0) circle (0.35);
        \draw[thick, black, fill=white] (0, 1) circle (0.1);
        \node at (-0.85, 0.85) {$-$};
        \node at (0.85, 0.85) {$+$};
    \end{tikzpicture}
    + q \\
    &= q^8 \begin{tikzpicture}[baseline=-1, scale=0.97]
        \PuncturedMonogonBackground
        \draw[thick] (-0.7, 0.7) to[out=-30, in=160] (0, 0.4) to[out=-20, in=90] (0.4, 0) to[out=270, in=20] (0, -0.4) to[out=200, in=30] (-0.7, -0.7);
        \draw[line width=2.5mm, gray!40] (0.1, 0.4) to[out=200, in=90] (-0.4, 0) to[out=270, in=160] (0.1, -0.4);
        \draw[thick] (0.7, 0.7) to[out=210, in=20] (0, 0.4) to[out=-160, in=90] (-0.4, 0) to[out=270, in=160] (0, -0.4) to[out=-20, in=150] (0.7, -0.7);
        \draw[thick, black, fill=white] (0, 1) circle (0.1);
        \node at (-0.85, 0.85) {$-$};
        \node at (-0.85, -0.85) {$-$};
        \node at (0.85, 0.85) {$+$};
        \node at (0.85, -0.85) {$+$};
    \end{tikzpicture}
    - q^{17/2} \begin{tikzpicture}[baseline=-1, scale=0.97]
        \PuncturedMonogonBackground
        \draw[thick] (-0.7, 0.7) to[out=-30, in=160] (0, 0.4) to[out=-20, in=90] (0.4, 0) to[out=270, in=0] (0, -0.4) to[out=180, in=270] (-0.4, 0);
        \draw[line width=2.5mm, gray!40] (0.6, 0.6) to[out=210, in=20] (0, 0.4) to[out=200, in=90] (-0.4, 0);
        \draw[thick] (-0.4, 0) to[out=90, in=200] (0, 0.4) to[out=20, in=210] (0.7, 0.7);
        \draw[thick, black, fill=white] (0, 1) circle (0.1);
        \node at (-0.85, 0.85) {$-$};
        \node at (0.85, 0.85) {$+$};
    \end{tikzpicture}
    + q^{7/2} \begin{tikzpicture}[baseline=-1, scale=0.97]
        \PuncturedMonogonBackground
        \draw[thick] (-0.7, 0.7) to[out=-30, in=210] (0.7, 0.7);
        \draw[thick] (0, 0) circle (0.35);
        \draw[thick, black, fill=white] (0, 1) circle (0.1);
        \node at (-0.85, 0.85) {$-$};
        \node at (0.85, 0.85) {$+$};
    \end{tikzpicture}
    + q \\
    &= q^8 v_{+,+} v_{-,-} - q^{11/2}\left( q^2 - q^{-2} \right) \begin{tikzpicture}[baseline=-1, scale=0.97]
        \PuncturedMonogonBackground
        \draw[thick] (-0.7, 0.7) to[out=-30, in=210] (0.7, 0.7);
        \draw[thick] (0, 0) circle (0.35);
        \draw[thick, black, fill=white] (0, 1) circle (0.1);
        \node at (-0.85, 0.85) {$-$};
        \node at (0.85, 0.85) {$+$};
    \end{tikzpicture}
    - q^{5} \left( q^{4} - q^{-4} \right) \\
    &= q^8 v_{+,+} v_{-,-} - q^{11/2}\left( q^2 - q^{-2} \right) v_{-,+} \left( q^{1/2}v_{+,-} - q^{5/2}v_{-,+} \right) - q^{5} \left( q^{4} - q^{-4} \right) \\
    &= q^8 v_{+,+}v_{-,-} + q^{8}\left( q^2 - q^{-2} \right) v_{-,+}^{2} - q^{6}\left( q^2 - q^{-2} \right) v_{-,+}v_{+,-} - q^{5} \left( q^{4} - q^{-4} \right)
\end{align*}

\section{Appendix: Towards a PBW basis}\label{section:pbw}
It would be ideal if we could  find a PBW basis for $\mathscr{S}\left(T^2 \setminus D^2\right)$, which would let us prove that we have found enough relations to give a presentation of this algebra. However, establishing the basis for this algebra has proven to be quite challenging, primarily due to the rapid escalation of calculations. Attempts to apply similar techniques used in the Kauffman bracket case have not been successful. While we have the relation $\frac{1}{q^2 - q^{-2}}[Y_i, Y_{i+1}]_q = Y_{i+2}$ as the Kauffman bracket case, computing an analogous relation on tangles instead yields equations \eqref{eq:shiftup} and \eqref{eq:shiftdown}, indicating the possible need to consider these half-twists when establishing a basis. Paradoxically, we also have the equality
\begin{align}\label{eq:NotLI}
    Y_i = q^{1/2} X_{i,r}(+,-) - q^{5/2} X_{i,r}(-,+)
\end{align}
for all $r \in \frac{1}{2}\mathbb{Z}$, further complicating things. Note that this also extends to all closed $(p,q)$-curves and their corresponding $(p,q,r)$-tangles.

Another possible route is to employing the embedding technique described in Section~\ref{section:modsQTori} and studying the image of $\mathscr{S}(T^2 \setminus D^2)$. Although some progress has been made, extracting useful patterns from this embedding is particularly arduous. Notably, the image of seemingly simple tangle elements quickly becomes unwieldy in size as $r \in \frac{1}{2}\mathbb{Z}$ moves further away from $0$.

For convenience and further use, we have calculated, and verified when $k=0$ using a computer program, all $16$ commuting relations among $X_{1,k}$ and $X_{2,k}$ for all states. Note that these calculations assume both tangles share the same number of twists with respect to our classification in Theorem~\ref{theorem:classification}. Here, $\widetilde{X}_{3,k}(\mu, \nu)$ corresponds to the $(1,-1)$-tangle with $k$ twists and $\widetilde{Y}_3$ is the closed $(1,-1)$-curve.

\begin{align*}
    X_{1,k}(+,+) X_{2,k}(+,+) &= q^{2} X_{2,k}(+,+) X_{1,k}(+,+) \\
    X_{1,k}(+,+) X_{2,k}(+,-) &= q^{-2} X_{2,k}(+,-) X_{1,k}(+,+) + q^{-3/2}(q^2 - q^{-2})X_{3,k}(+,+)\\
    X_{1,k}(+,+) X_{2,k}(-,+) &= q^{-2} X_{2,k}(-,+) X_{1,k}(+,+) \\
    X_{1,k}(+,+) X_{2,k}(-,-) &= q^{-6} X_{2,k}(-,-) X_{1,k}(+,+) + q^{-3/2}(q^2 - q^{-2}) X_{3,k}(-,+) \\
    X_{1,k}(+,-) X_{2,k}(+,+) &= q^{6} X_{2,k}(+,+) X_{1,k}(+,-) - q^{7/2}(q^2 - q^{-2})X_{2,k}(+,+)Y_1\\
    &\phantom{=} - q^{5/2}(q^2 - q^{-2})\left( q^{2} \widetilde{X}_{3,k}(+,+) + q^{-2}\widetilde{X}_{3,k-\frac{1}{2}}(+,+) \right)\\
    X_{1,k}(+,-) X_{2,k}(+,-) &= q^{2} X_{2,k}(+,-) X_{1,k}(+,-) + (q^2 - q^{-2})\widetilde{Y}_3 \\
    &\phantom{=} - q^{-1/2}(q^2 - q^{-2}) \left( q \widetilde{X}_{3,k}(+,-) + X_{2}(+,-)Y_1 - q^{-1} X_{3}(+,-) \right) \\
    X_{1,k}(+,-) X_{2,k}(-,+) &= q^{2} X_{2,k}(-,+) X_{1,k}(+,-) - q^{-1/2}(q^2 - q^{-2}) \widetilde{X}_{3,k-\frac{1}{2}}(-,+)\\
    X_{1,k}(+,-) X_{2,k}(-,-) &= q^{-2} X_{2,k}(-,-) X_{1,k}(+,-) + q^{-3/2}(q^2 - q^{-2}) X_{3,k}(-,-) \\
    X_{1,k}(-,+) X_{2,k}(+,+) &= q^{6} X_{2,k}(+,+) X_{1,k}(-,+) - q^{5/2}(q^2 - q^{-2}) \widetilde{X}_{3,k-\frac{1}{2}}(+,+) \\
    X_{1,k}(-,+) X_{2,k}(+,-) &= q^{2} X_{2,k}(+,-) X_{1,k}(-,+) - q^{1/2}(q^2 - q^{-2}) \widetilde{X}_{3,k}(-,+) \\
    X_{1,k}(-,+) X_{2,k}(-,+) &= q^{2} X_{2,k}(-,+) X_{1,k}(-,+) \\
    X_{1,k}(-,+) X_{2,k}(-,-) &= q^{-2} X_{2,k}(-,-) X_{1,k}(-,+) \\
    X_{1,k}(-,-) X_{2,k}(+,+) &= q^{10} X_{2,k}(+,+) X_{1,k}(-,-) - q^{13/2} (q^2 - q^{-2})\left( q^{4} X_{3,k}(-,+) + q^{-4} \widetilde{X}_{3,k}(+,-) \right)\\
    &\phantom{=} - q^{11/2}(q^2 - q^{-2}) \left( q^3 \widetilde{X}_{3,k-\frac{1}{2}}(-,+) + q^{-3} \widetilde{X}_{3,k-\frac{1}{2}}(+,-)\right)\\
    &\phantom{=} - q^{7}(q^2 - q^{-2})(q^3 + q^{-3}) \widetilde{Y}_{3,k}\\
    X_{1,k}(-,-) X_{2,k}(+,-) &= q^{6} X_{2,k}(+,-) X_{1,k}(-,-)\\
    &\phantom{=} - q^{5/2}(q^2 - q^{-2}) \left( q X_{2,k}(-,-)Y_1 + (q^{2} + q^{-2})\widetilde{X}_{3,k}(-,-) \right) \\
    X_{1,k}(-,-) X_{2,k}(-,+) &= q^{6} X_{2,k}(-,+) X_{1,k}(-,-) - q^{7/2} (q^2 - q^{-2}) \widetilde{X}_{3,k-\frac{1}{2}}(-,-) \\
    X_{1,k}(-,-) X_{2,k}(-,-) &= q^{2} X_{2,k}(-,-) X_{1,k}(-,-)
\end{align*}

\bibliography{bibtex}
\bibliographystyle{alpha}

\end{document}